\documentclass[10pt]{article}

\textwidth= 5.00in
\textheight= 7.4in
\topmargin = 30pt
\evensidemargin=0pt
\oddsidemargin=55pt
\headsep=17pt
\parskip=.5pt
\parindent=12pt
\font\smallit=cmti10

\usepackage{amssymb,latexsym,amsmath,epsfig,amsthm}
\usepackage{rgdw_global, rgdw}
\usepackage[a-1b]{pdfx}
\usepackage{hyperref}
\usepackage{graphicx}
\author{Yuki Irie}
\date{\today}
\title{}
\begin{document}

\begin{center}
\uppercase{\bf A base-\(p\) Sprague-Grundy type theorem for \(p\)-calm subtraction games: Welter's game and representations of generalized symmetric groups}
\vskip 20pt
{\bf Yuki Irie\footnote{This work was supported by JSPS KAKENHI Grant Number JP20K14277.}}\\
{\smallit Research Alliance Center for Mathematical Sciences, Tohoku University, Miyagi, Japan}\\
{\tt yirie@tohoku.ac.jp}\\
\end{center}
\vskip 20pt

\centerline{\bf Abstract}

\noindent
For impartial games \(\Gamma\) and \(\Gamma'\), the Sprague-Grundy function of the disjunctive sum \(\Gamma + \Gamma'\) 
is equal to the Nim-sum of their Sprague-Grundy functions.
In this paper, we introduce \(p\)-calm subtraction games, 
and show that for \(p\)-calm subtraction games \(\Gamma\) and \(\Gamma'\),
the Sprague-Grundy function of a \(p\)-saturation of \(\Gamma + \Gamma'\) 
is equal to the \(p\)-Nim-sum of the Sprague-Grundy functions of their \(p\)-saturations.
Here a \(p\)-Nim-sum is the result of addition without carrying in base \(p\) and a \(p\)-saturation of \(\Gamma\) is an impartial game obtained from \(\Gamma\) by adding some moves.
It will turn out that Nim and Welter's game are \(p\)-calm.
Further, using the \(p\)-calmness of Welter's game, we generalize a relation between Welter's game and representations of symmetric groups
to disjunctive sums of Welter's games and representations of generalized symmetric groups;
this result is described combinatorially in terms of Young diagrams.

\pagestyle{myheadings}
\markright{}
\thispagestyle{empty}
\baselineskip=12.875pt
\vskip 30pt 

\section{Introduction}
\label{sec:org3a2010e}
Base 2 plays a key role in combinatorial game theory. 
Specifically, the Sprague-Grundy function of the disjunctive sum of two impartial games is equal to the Nim-sum of their Sprague-Grundy functions.
Here a Nim-sum is the result of addition without carrying in base 2.
In particular, the Sprague-Grundy value of a position in Nim equals the Nim-sum of the heap sizes.
It is rare that the Sprague-Grundy function of an impartial game can be written explicitly like Nim.
Another well-known example is Welter's game, a subtraction game like Nim;
Welter \cite{welter-theory-1954} gave an explicit formula for its Sprague-Grundy function by using the binary numeral system (see Theorem \ref{orga5b1b7f}).

A few games related to base \(p\) have also been found,
where \(p\) is an integer greater than 1, not necessarily prime.
For example, Flanigan found a game, called \(\text{Rim}_p\);
the Sprague-Grundy value of a position in \(\text{Rim}_p\) equals the \(p\)-Nim-sum of the heap sizes, where a \(p\)-Nim-sum is the result of addition without carrying in base \(p\).\footnote{\(\text{Rim}_{\ccp }\) was devised by James A. Flanigan in an unpublished paper entitled ``NIM, TRIM and RIM.''}
We use \(\oplus_p\) for the \(p\)-Nim-sum.\footnote{The operation \(\oplus_p\) is different from that related to Moore's \(\text{Nim}_{k - 1}\) \cite{moore-generalization-1910} and Li's \(k\)-person Nim \cite{li-nperson-1978}. These games were analyzed using addition modulo \(\cck\) in base 2.}
For example, consider the heap \((3, 7, 4)\).
While, in Nim, the Sprague-Grundy value of \((3, 7, 4)\) is equal to
\[
3 \oplus_2 7 \oplus_2 4 = (1 + 2) \oplus_2 (1 + 2 + 4) \oplus_2 (4) = 0,
\]
in \(\text{Rim}_3\), its Sprague-Grundy value is equal to
\[
 3 \oplus_3 7 \oplus_3 4 = (3) \oplus_3 (1 + 2 \cdot 3) \oplus_3 (1 + 3) = 2 + 3 = 5.
\]
Thus we can say that \(\text{Rim}_p\) is a base-\(p\) version of Nim.
Irie \cite{irie-pSaturations-2018a} observed that there are infinitely many base-\(p\) versions of Nim.
From this observation, he introduced \emph{\(p\)-saturations} and showed that a \(p\)-saturation of Nim is a base-\(p\) version of Nim.
That is, the Sprague-Grundy value of a position in a \(p\)-saturation of Nim equals the \(p\)-Nim-sum of the heap sizes.
Figure \ref{fig:org5cb71dd} shows an example of a 3-saturation of Nim.
While we can take tokens from just one heap in Nim, 
it is allowed to take tokens from multiple heaps with a restriction in a \(p\)-saturation of Nim.
Incidentally, \(\text{Rim}_p\) is one of the \(p\)-saturations of Nim,
and \(p\)-saturations are defined for subtraction games (see Section \ref{org6e15e26} for details).

\begin{figure}[htbp]
\centering
\includegraphics[scale=0.83]{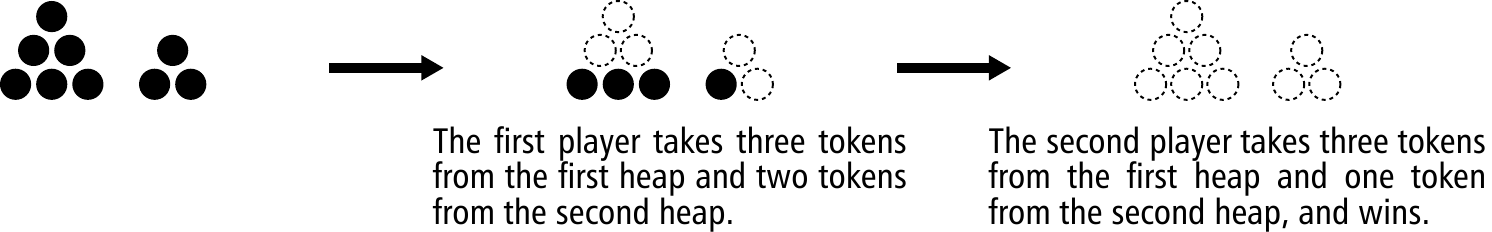}
\caption{\label{fig:org5cb71dd}An example of a 3-saturation of Nim.}
\end{figure}

\noindent
Further, it was shown that a \(p\)-saturation of Welter's game is a base-\(p\) version of Welter's game \cite{irie-pSaturations-2018a}.
In other words, we can obtain an explicit formula for the Sprague-Grundy function of a \(p\)-saturation of
Welter's game by rewriting Welter's formula with base \(p\) (see Theorem \ref{org28c6b37}).

In this paper, an \emph{impartial game} is defined to be a digraph such that the maximum length of a walk from each vertex is finite.
We will recall the basics of impartial games in Section \ref{org70b71d2}.
Let \(\Gamma^1\) and \(\Gamma^2\) be two impartial games.
Then the Sprague-Grundy function of the disjunctive sum \(\Gamma^1 + \Gamma^2\) is equal to the Nim-sum of their Sprague-Grundy functions.
The fundamental question of this paper is whether there exists an operation \(+_p\) such that the Sprague-Grundy function of \(\Gamma^1 +_p \Gamma^2\) is equal to the \(p\)-Nim-sum of their Sprague-Grundy functions.
For example, if \(p = 2\), then the ordinary disjunctive sum satisfies this condition.
We present a partial solution to the question.
More precisely, we consider the following condition on subtraction games \(\Gamma^1\) and \(\Gamma^2\):
\begin{description}
\item[{(PN)}] The Sprague-Grundy function of a \(p\)-saturation of \(\Gamma^1 + \Gamma^2\) is equal to the \(p\)-Nim-sum of the Sprague-Grundy functions of their \(p\)-saturations.
\end{description}
For a subtraction game \(\Gamma^1\) satisfying a saturation condition, we first give a necessary condition for \(\Gamma^1\) to satisfy (PN) when \(\Gamma^2\) is Nim (Lemma \ref{orgf5a2cb9}).
If \(\Gamma^1\) satisfies this necessary condition, then \(\Gamma^1\) will be said to be \emph{\(p\)-calm}.
It will turn out that Nim and Welter's game are \(p\)-calm.
Our main theorem (Theorem \ref{org25184ea}) states that \(p\)-calm subtraction games satisfy (PN) and are closed under disjunctive sum.
In particular, \(\Gamma^1\) and Nim satisfy (PN) if and only if \(\Gamma^1\) is \(p\)-calm.

Using \(p\)-calmness of Welter's game, we can generalize a relation between 
Welter's game and representations of symmetric groups;
this result is described combinatorially in Section \ref{org3ea736a}, and its algebraic interpretation is stated in Remark \ref{org09152e2}.
Sato \cite{sato-game-1968, sato-mathematical-1970, sato-maya-1970} studied Welter's game independently from Welter.
Although Welter's game is usually described as a coin-on-strip game,
Sato realized that this game can be considered as a game with Young diagrams, as we will describe in Section \ref{org1dc745b}.
He then found that the Sprague-Grundy function of Welter's game
can be written in a similar way to the hook formula, which is a formula for representations of symmetric groups (Theorems \ref{orgd359077} and \ref{org94584db}).
From this, Sato conjectured that Welter's game is related to representations of symmetric groups.
A relation between them was discovered by Irie \cite{irie-pSaturations-2018a}.
Specifically, for a prime \(\ccp\), he showed a theorem on representations of symmetric groups, 
which will be called the \(p'\)-component theorem (Theorem \ref{org2374258});
by using this theorem, he obtained an explicit formula for the Sprague-Grundy function of a \(p\)-saturation of Welter's game.\footnote{Although the \(p'\)-component theorem holds only when \(\ccp\) is prime, a slightly weaker result holds even when \(\ccp\) is not prime. By using this, we can obtain an explicit formula for the Sprague-Grundy function of a \(p\)-saturation of Welter's game.}
Here, the \(p'\)-component theorem is a result of representations of symmetric groups with degree prime to \(p\).
Incidentally, on representations with degree prime to \(p\), there is a famous conjecture of McKay, 
which is one of the most important conjectures in representation theory (see, for example, \cite{navarro-character-2018} for details).
In the present article, we generalize the \(p'\)-component theorem to disjunctive sums of Welter's games and representations of generalized symmetric groups, 
which will be described combinatorially in terms of Young diagrams (Theorem \ref{org4fefb85}).

This paper is organized as follows.
Section \ref{org70b71d2} contains the basics of impartial games.
In Section \ref{org6e15e26}, we recall \(p\)-saturations and define \(p\)-calm subtraction games.
We then prove that \(p\)-calm subtraction games satisfy the condition (PN) (Theorem \ref{org25184ea});
moreover, using the \(p\)-calmness of Welter's game, we generalize a property of Welter's game (Proposition \ref{org14ff579}), which yields a generalization of the \(p'\)-component theorem in Section \ref{org3ea736a}.

\section{Subtraction Games}
\label{sec:org46e57ad}
\label{org70b71d2}
This section provides the basics of impartial games.
We define subtraction games, disjunctive sums, and Sprague-Grundy functions.
See \cite{berlekamp-Winning-2001, conway-numbers-2001} for more details of combinatorial game theory.

We first introduce some notation for impartial games.
Let \(\Gamma\) be a digraph with vertex set \(\Position[\Gamma]\) and edge set \(\Edge[\Gamma]\),
that is, \(\Position[\Gamma]\) is a set and \(\Edge[\Gamma] \subseteq \Position[\Gamma]^2\).
As we have defined in the introduction,
the digraph \(\Gamma\) is called a (\emph{short}) \emph{impartial game} if
the maximum length \(\lg_{\Gamma}(\ccA)\) of a walk from each vertex \(\ccA\) is finite.
Let \(\Gamma\) be an impartial game. The vertex set \(\Position[\Gamma]\) is called its \emph{position set}.
Let \(\ccA\) and \(\ccB\) be two positions in \(\Gamma\).
If \((\ccA, \ccB) \in \Edge[\Gamma]\), then \(\ccB\) is called an \emph{option} of \(\ccA\).
If there exists a path from \(\ccA\) to \(\ccB\), then \(\ccB\) is called a \emph{descendant} of \(\ccA\).
A descendant \(\ccB\) of \(\ccA\) is said to be \emph{proper} if \(\ccB \neq \ccA\).

 \begin{example}
 \comment{Exm.}
\label{sec:orga344f8a}
\label{org84aa7b1}
The digraph with vertex set \(\set{1,2,3}\) and edge set \(\set{(1,2), (2,3), (1,3)}\)
is an impartial game. However, the digraph with vertex set \(\set{1,2}\) and edge set \(\set{(1,2), (2,1)}\)
is not an impartial game since it has the walk \((1,2,1,2,\ldots)\) of infinite length.
 
\end{example}

\begin{remark}
 \comment{Rem.}
\label{sec:org9c4bc25}
\label{org4d08028}
Let \(\Gamma\) be an impartial game with at least one position.
We can consider \(\Gamma\) as the following two-player game.
Before the game, we put a token on a starting position \(\ccA \in \Position[\Gamma]\).
The first player moves the token from \(\ccA\) to its option \(\ccB\).
Similarly, the second player moves the token from \(\ccB\) to its option \(\ccC\).
In this way, the two players alternately move the token.
The winner is the player who moves the token last.
For example, let \(\Gamma\) be the impartial game with position set \(\set{1,2,3,4}\)
and edge set \(\set{(1,2), (2,3), (1,4)}\), and start at position 1.
The first player can move the token to either position 2 or 4.
If she moves it to position 2, then the second player moves it to 3 and wins.
Thus she should move the token to 4.
 
\end{remark}

\comment{connect}
\label{sec:org7c97ee6}

We now define subtraction games.
Let \(\NN\) be the set of nonnegative integers.
Elements in \(\NN^\ccm\) will be denoted by upper-case letters, and components of them by lower-case
letters with superscripts. For example, \(\ccA = (\cca^1, \ldots, \cca^\ccm) \in \NN^\ccm\).
Let \(\Position \subseteq \NN^m\) and \(\sC \subseteq \NN^m \setminus \set{(0,\ldots, 0)}\).
Define \(\Gamma(\Position, \sC)\) to be the impartial game with position set \(\Position\)
and edge set
\[
 \set{(\ccA, \ccB) \in \Position^2 : \ccA - \ccB \in \sC}.
\]
The game \(\Gamma(\Position, \sC)\) is called a \emph{subtraction game}.

 \begin{example}
 \comment{Exm.}
\label{sec:orge836e2a}
\label{org07587d8}
Let
\[
 \WeightOneSset{\ccm} = \Set{\ccC \in \NN^m : \wt(\ccC) = 1},
\]
where \(\wt(\ccC)\) is the Hamming weight of \(\ccC\), that is, the number of nonzero components of \(\ccC\).
The subtraction game \(\Gamma(\NN^{\ccm}, \WeightOneSset{\ccm})\) is called \emph{Nim} and is denoted by \(\Nim[\ccm]\).
For example, in \(\Nim[2]\), the options of \((1,2)\) are \((0,2)\), \((1,1)\), and \((1,0)\).
 
\end{example}

 \begin{example}
 \comment{Exm.}
\label{sec:org8d4bc98}
\label{orgbfa3168}
Let 
\[
 \Position = \set{\ccA \in \NN^m : \cca^i \neq \cca^j \tfor 1 \le \cci < \ccj \le \ccm}.
\]
The subtraction game \(\Gamma(\Position, \WeightOneSset{\ccm})\) is called \emph{Welter's game} and is denoted by \(\Welter[\ccm]\).
For example, in \(\Welter[2]\), the options of \((1,2)\) are \((0,2)\) and \((1,0)\).
Note that
\begin{equation}
\label{org84a24bd}
\lg_{\Welter[\ccm]}(\ccA) = \sum_{\cci = 1}^\ccm \cca^\cci - (1 + 2 + \cdots + \ccm - 1) = \sum_{\cci = 1}^\ccm \cca^\cci - {m \choose 2}.
\end{equation}
For example, if \(\ccA = (1, 3, 4)\), then
\[
 ((1, 3, 4),\ \ (0, 3, 4),\ \ (0, 2, 4),\ \ (0, 1, 4),\ \ (0, 1, 3),\ \ (0, 1, 2))
\]
has length \(5\) (\(=1 + (3 - 1) + (4 - 2)\)).
 
\end{example}

\comment{connect}
\label{sec:org6578092}
We define disjunctive sums.
For \(\cci \in \set{1, 2}\), let \(\Gamma^i\) be an impartial game,
and let \(\Position^i = \Position[\Gamma^i]\) and \(\Edge^i = \Edge[\Gamma^i]\).
The \emph{disjunctive sum} \(\Gamma^1 + \Gamma^2\) of \(\Gamma^1\) and \(\Gamma^2\) is defined
to be the impartial game with position set \(\cP^1 \times \cP^2\) and
edge set
\begin{align*}
 &\set{((\ccA^1, \ccA^2), (\ccB^1, \ccA^2)) : (\ccA^1, \ccB^1) \in \cE^1,\ \ccA^2 \in \cP^2}\\
 \cup &\set{((\ccA^1, \ccA^2), (\ccA^1, \ccB^2)) : (\ccA^2, \ccB^2) \in \cE^2,\ \ccA^1 \in \cP^1}.
\end{align*}
For example, 
\[
 \Nim[\ccm] = \underbrace{\Nim[1] + \cdots + \Nim[1]}_\ccm.
\]
Note that the disjunctive sum of two subtraction games is again a subtraction game.
Indeed, if \(\cP^i \subseteq \NN^{m^i}\) and \(\Gamma^i = \Gamma(\cP^i, \sC^i)\) for \(i \in \set{1,2}\),
then \(\Gamma^1 + \Gamma^2 = \Gamma(\cP^1 \times \cP^2, \sC)\), where
\begin{align*}
 \sC = &\set{(c^{1,1}, \ldots, c^{1, m^1}, \overbrace{0, \ldots, 0}^{m^2}) : (c^{1,1}, \ldots, c^{1, m^1}) \in \sC^1}\\
  \cup &\set{(\underbrace{0, \ldots, 0}_{m^1}, c^{2,1}, \ldots, c^{2, m^2}) : (c^{2,1}, \ldots, c^{2, m^2}) \in \sC^2}.
\end{align*}

\comment{connect}
\label{sec:org4220974}
We define Sprague-Grundy functions.
Let \(\Gamma\) be an impartial game.
For \(\ccA \in \Position[\Gamma]\), the \emph{Sprague-Grundy value} \(\sg_{\Gamma}(\ccA)\) of \(\ccA\) is defined by
\[
 \sg_{\Gamma}(\ccA) = \mex \Set{\sg_{\Gamma}(\ccB) : (\ccA, \ccB) \in \Edge[\Gamma]},
\]
where \(\mex \ccS = \min \set{\alpha \in \NN : \alpha \not \in \ccS}\).
The function \(\sg_{\Gamma}: \Position[\Gamma] \to \NN\) is called the \emph{Sprague-Grundy function} of \(\Gamma\).
An easy induction shows that the second player can force a win if and only if the Sprague-Grundy value of the starting position equals 0.

 \begin{theorem}[\hspace{-0.001em}\cite{sprague-uber-1935, grundy-mathematics-1939}]
 \comment{Thm. [\hspace{-0.001em}\cite{sprague-uber-1935, grundy-mathematics-1939}]}
\label{sec:orgb8714c4}
\label{org4cd79b6}
If \(\Gamma^1\) and \(\Gamma^2\) are impartial games, then
\[
 \sg_{\Gamma^1 + \Gamma^2} = \sg_{\Gamma^1} \oplus_2 \sg_{\Gamma^2},
\]
that is, if \(\ccA^i\) is a position in \(\Gamma^i\) for \(i \in \set{1, 2}\), then
\[
 \sg_{\Gamma^1 + \Gamma^2} (\ccA^1, \ccA^2) = \sg_{\Gamma^1}(\ccA^1) \oplus_2 \sg_{\Gamma_2}(\ccA^2).
\]
 
\end{theorem}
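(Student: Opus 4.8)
The plan is to establish the identity pointwise by induction. Fix positions $A^1 \in \Position[\Gamma^1]$ and $A^2 \in \Position[\Gamma^2]$; I will induct on $\lg_{\Gamma^1}(A^1) + \lg_{\Gamma^2}(A^2)$. The first preliminary observation is that this sum is finite and in fact equals $\lg_{\Gamma^1 + \Gamma^2}(A^1, A^2)$, so $\Gamma^1 + \Gamma^2$ is indeed an impartial game and $\sg_{\Gamma^1 + \Gamma^2}$ is well defined; moreover a move in $\Gamma^1 + \Gamma^2$ decreases exactly one of the two summands' walk lengths, so the induction is legitimate. Writing $a = \sg_{\Gamma^1}(A^1)$, $b = \sg_{\Gamma^2}(A^2)$, $n = a \oplus_2 b$, and recalling that $\sg_{\Gamma^1 + \Gamma^2}(A^1, A^2)$ is by definition the $\mex$ of the Sprague-Grundy values of the options of $(A^1, A^2)$, it suffices to prove: (i) no option of $(A^1, A^2)$ has Sprague-Grundy value $n$; and (ii) for each $m < n$, some option of $(A^1, A^2)$ has Sprague-Grundy value $m$.

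Step (i) is the easy direction. Every option of $(A^1, A^2)$ has the form $(B^1, A^2)$ with $(A^1, B^1) \in \Edge[\Gamma^1]$, or $(A^1, B^2)$ with $(A^2, B^2) \in \Edge[\Gamma^2]$; by the induction hypothesis its Sprague-Grundy value is $\sg_{\Gamma^1}(B^1) \oplus_2 b$ or $a \oplus_2 \sg_{\Gamma^2}(B^2)$, respectively. Since $a$ (resp.\ $b$) is the $\mex$ of a set containing $\sg_{\Gamma^1}(B^1)$ (resp.\ $\sg_{\Gamma^2}(B^2)$), we get $\sg_{\Gamma^1}(B^1) \neq a$ and $\sg_{\Gamma^2}(B^2) \neq b$; because the maps $x \mapsto x \oplus_2 b$ and $x \mapsto a \oplus_2 x$ are injective, neither value can equal $n$.

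Step (ii) is the crux, and it is where the arithmetic of carry-free base-$2$ addition is used. Given $m < n$, set $d = m \oplus_2 n$ and let $h$ be the position of its highest nonzero binary digit. Since $m < n$, digit $h$ of $n$ is $1$ and that of $m$ is $0$, so digits $h$ of $a$ and $b$ disagree; after possibly interchanging $\Gamma^1$ and $\Gamma^2$, assume digit $h$ of $a$ is $1$. Then $a \oplus_2 d < a$, because $\oplus_2$-ing $d$ into $a$ clears digit $h$ of $a$ while touching no higher digit. As $a$ is the $\mex$ of the Sprague-Grundy values of the options of $A^1$, there is an option $B^1$ of $A^1$ with $\sg_{\Gamma^1}(B^1) = a \oplus_2 d$; by the induction hypothesis the option $(B^1, A^2)$ of $(A^1, A^2)$ then has Sprague-Grundy value $(a \oplus_2 d) \oplus_2 b = n \oplus_2 d = m$, using associativity and $d \oplus_2 d = 0$. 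This gives (ii) and closes the induction (the base case $\lg_{\Gamma^1}(A^1) + \lg_{\Gamma^2}(A^2) = 0$ being the trivial instance in which both (i) and (ii) are vacuous).

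I expect step (ii) — specifically the choice of which summand to decrease, forced by the top digit of $m \oplus_2 n$ — to be the only point requiring genuine thought; everything else is bookkeeping. It is worth noting that this ``highest differing digit'' device, with base $2$ replaced by base $p$, is essentially the mechanism that must be adapted when we later show that $p$-calm subtraction games satisfy the condition (PN).
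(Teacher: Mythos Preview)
Your proof is correct and is the standard argument for the Sprague--Grundy theorem. Note, however, that the paper does not actually supply a proof of this statement: it is quoted as a classical result with references to Sprague and Grundy, and the paper immediately moves on to examples. So there is no ``paper's own proof'' to compare against; your argument is exactly the one found in standard references such as \cite{berlekamp-Winning-2001, conway-numbers-2001}. Your closing remark is also apt: the highest-differing-digit trick in step~(ii) is indeed the base-$2$ prototype of the mechanism used later in the proof of Theorem~\ref{org25184ea}, where the role of ``which component to move in'' is played by the $p$-saturation of Nim on the tuple $(\alpha^1,\ldots,\alpha^k)$.
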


 \begin{example}
 \comment{Exm.}
\label{sec:org48e0d35}
\label{org578838c}
Since \(\Nim[\ccm] = \Nim[1] + \cdots + \Nim[1]\), it follows from Theorem \ref{org4cd79b6} that
\[
 \sg_{\Nim[\ccm]}(\ccA) = \cca^1 \oplus_2 \cdots \oplus_2 \cca^\ccm \quad \tfor \ccA \in \NN^\ccm.
\]
 
\end{example}

 \begin{theorem}[\hspace{-0.001em}\cite{welter-theory-1954}]
 \comment{Thm. [\hspace{-0.001em}\cite{welter-theory-1954}]}
\label{sec:orgc022034}
\label{orga5b1b7f}
If \(\ccA\) is a position in Welter's game \(\Welter[\ccm]\), then
\[
 \sg_{\Welter[\ccm]}(\ccA) = \cca^1 \oplus_2 \cdots \oplus_2 \cca^\ccm \oplus_2 \left(\bigopluspm[i < j][][2][-0.3em][0.3em] 2^{\ord_2(\cca^i - \cca^j) + 1} - 1 \right),
\]
where \(\ord_2(\cca)\) is the \(2\)-adic order of \(\cca\), that is,
\[
 \ord_2(\cca) = \begin{cases}
 \max \set{\ccL \in \NN : 2^\ccL \mid \cca} & \tif \cca \neq 0, \\
 \infty & \tif \cca = 0.
 \end{cases}
\]
 
\end{theorem}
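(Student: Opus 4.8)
\medskip
\noindent\emph{Proof proposal.}\ Abbreviate \(\theta(a,b) = 2^{\ord_2(a-b)+1}-1\) for distinct \(a,b \in \NN\), and let \(W(\ccA)\) denote the right-hand side of Welter's formula, so that
\[
 W(\ccA) = \Bigl(\bigoplus_{i} \cca^i\Bigr) \oplus_2 \Bigl(\bigoplus_{i<j} \theta(\cca^i,\cca^j)\Bigr),
\]
all operators being base-\(2\) Nim-sums. Both \(W\) and \(\sg_{\Welter[\ccm]}\) are symmetric in the coordinates, so we may sort them whenever convenient. The plan is to prove \(\sg_{\Welter[\ccm]} = W\) by induction on the length \(\lg_{\Welter[\ccm]}(\ccA) = \sum_i \cca^i - \binom{\ccm}{2}\) (see \eqref{org84a24bd}); every option of \(\ccA\) has strictly smaller length. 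In the base case \(\ccA\) is the terminal position \(\set{0,1,\ldots,\ccm-1}\), and one checks directly that \(W(\ccA)=0\). In the inductive step the induction hypothesis gives \(\sg_{\Welter[\ccm]}(\ccA) = \mex\set{W(\ccB) : \ccB \text{ an option of } \ccA}\), so it remains to establish the two mex conditions: (I) \(W(\ccB)\neq W(\ccA)\) for every option \(\ccB\) of \(\ccA\); and (II) for each \(v<W(\ccA)\) there is an option \(\ccB\) of \(\ccA\) with \(W(\ccB)=v\).

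The bridge to both conditions is a formula for how \(W\) changes under a single move. An option \(\ccB\) of \(\ccA\) is obtained by replacing one coordinate, say the \(k\)th, by some \(x\) with \(0\le x<\cca^k\) and \(x\notin\set{\cca^j : j\neq k}\). Separating off the terms of \(W\) that involve coordinate \(k\), put
\[
 f_{\ccA,k}(t) = t \oplus_2 \bigoplus_{j\neq k} \theta(t,\cca^j) \qquad\text{for } t \in \NN\setminus\set{\cca^j : j\neq k}.
\]
Then \(W(\ccA) = f_{\ccA,k}(\cca^k)\oplus_2 R_k\) and \(W(\ccB) = f_{\ccA,k}(x)\oplus_2 R_k\), where \(R_k\) is the value of the corresponding expression for \(\ccA\) with coordinate \(k\) deleted; hence
\[
 W(\ccB) = W(\ccA) \oplus_2 f_{\ccA,k}(\cca^k) \oplus_2 f_{\ccA,k}(x).
\]
Thus (I) reduces to the injectivity of each \(f_{\ccA,k}\) on its domain (which contains \(\cca^k\)), while (II) reduces to the assertion that, as \(k\) ranges over \(\set{1,\ldots,\ccm}\) and \(x\) over admissible values below \(\cca^k\), the numbers \(f_{\ccA,k}(\cca^k)\oplus_2 f_{\ccA,k}(x)\) run through all of \(\set{W(\ccA)\oplus_2 v : v<W(\ccA)}\).

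The functions \(f_{\ccA,k}\) are analyzed bit by bit in base \(2\), using that bit \(\ell\) of \(\theta(a,b)\) equals \(1\) exactly when \(a\equiv b\pmod{2^\ell}\), i.e.\ when \(\ord_2(a-b)\ge\ell\). Hence bit \(\ell\) of \(f_{\ccA,k}(t)\) is bit \(\ell\) of \(t\) plus, modulo \(2\), the parity of \(\#\set{j\neq k : t\equiv\cca^j\pmod{2^\ell}}\). Injectivity is then immediate: if \(t\neq t'\) lie in the domain and \(\ell\) is the least bit in which they differ, then \(t\equiv t'\pmod{2^\ell}\) forces the two parity corrections at bit \(\ell\) to coincide, so \(f_{\ccA,k}(t)\) and \(f_{\ccA,k}(t')\) differ at bit \(\ell\); this proves (I). The same analysis should further show that each \(f_{\ccA,k}\) is a bijection onto \(\NN\), the preimage of a given value being reconstructed one digit at a time from the least significant bit up. For (II), given \(v<W(\ccA)\), let \(\ell^{\ast}\) be the highest bit in which \(W(\ccA)\) and \(v\) differ, so bit \(\ell^{\ast}\) of \(W(\ccA)\) is \(1\); one selects a coordinate \(k\) adapted to \(\ell^{\ast}\) and takes \(x\) to be the unique element of the domain with \(f_{\ccA,k}(x) = f_{\ccA,k}(\cca^k)\oplus_2 W(\ccA)\oplus_2 v\), which makes \(W\) of the resulting position equal \(v\). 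What remains is to check that \(k\) can be chosen so that \(x<\cca^k\), i.e.\ so that this is a legal, decreasing, move. I expect this last point to be the main obstacle: it is the combinatorial core of the theorem, and guaranteeing simultaneously that \(x\) attains the prescribed \(f_{\ccA,k}\)-value and that \(x<\cca^k\) requires a careful choice of \(k\) together with a descent/counting argument on the binary digits.

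Two alternatives are worth recording. Conway's treatment via the ``mating'' operation gives a different inductive proof with its own bookkeeping; and once the hook-length reformulation of \(\sg_{\Welter[\ccm]}\) (Theorems \ref{orgd359077} and \ref{org94584db}) is available, the formula can instead be read off from the Maya-diagram description of the game. Since those appear later in the paper, the self-contained digit-level verification above is the most direct route at this stage.
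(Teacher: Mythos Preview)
The paper does not supply its own proof of this theorem: it is stated with a citation to Welter's original paper and followed only by an example; likewise the \(p\)-generalization (Theorem~\ref{org28c6b37}) is quoted from \cite{irie-pSaturations-2018a} without proof. So there is no in-paper argument to compare your proposal against.

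On the proposal itself: your framework is the standard one (symmetry, induction on \(\lg_{\Welter[\ccm]}\), verification of the two mex conditions via the one-coordinate ``Welter function'' \(f_{\ccA,k}\)), and your digit-level proof of injectivity for (I) is correct. However, the proposal is explicitly incomplete at the crucial point: for (II) you note that after choosing \(k\) and solving \(f_{\ccA,k}(x)=f_{\ccA,k}(\cca^k)\oplus_2 W(\ccA)\oplus_2 v\) one must still ensure \(x<\cca^k\), and you write that you ``expect this last point to be the main obstacle'' without supplying the argument. That is precisely the substance of Welter's theorem; everything else is routine bookkeeping. So as written this is an outline, not a proof. (A small additional gap: the base case ``one checks directly that \(W(0,1,\ldots,\ccm-1)=0\)'' is true but not entirely trivial---it amounts to the identity \(\bigoplus_{i=0}^{\ccm-1} i \;=\; \bigoplus_{0\le i<j\le \ccm-1}\bigl(2^{\ord_2(j-i)+1}-1\bigr)\), which deserves at least a one-line induction.)

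If you want to close the gap along the lines you sketched, the cleanest route is to prove that each \(f_{\ccA,k}\) is an \emph{order-preserving} bijection of \(\NN\setminus\set{\cca^j:j\neq k}\) onto \(\NN\) (not merely a bijection): from bit \(\ell\) of \(f_{\ccA,k}(t)\) being bit \(\ell\) of \(t\) plus a correction depending only on \(t\bmod 2^\ell\), one gets \(f_{\ccA,k}(t)\equiv f_{\ccA,k}(t')\pmod{2^\ell}\) iff \(t\equiv t'\pmod{2^\ell}\), and then monotonicity follows by looking at the top differing bit. Monotonicity immediately turns ``find \(x\) with a prescribed \(f\)-value'' into ``find \(x<\cca^k\)'' once you know \(f_{\ccA,k}(\cca^k)>f_{\ccA,k}(x)\); the remaining work is to choose \(k\) so that \(f_{\ccA,k}(\cca^k)\oplus_2 W(\ccA)\oplus_2 v < f_{\ccA,k}(\cca^k)\), which is where the top-bit analysis at \(\ell^{\ast}\) must be made precise. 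Alternatively, as you note, Conway's mating argument or Sato's hook reformulation give complete proofs; but neither is shorter than finishing the digit argument carefully.
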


 \begin{example}
 \comment{Exm.}
\label{sec:orgfb67911}
\label{org27df6bf}
Let \(\ccA\) be the position \((7, 5, 3)\) in \(\Welter[3]\). 
By Theorem \ref{orga5b1b7f}, we see that
\begin{align*}
 \sg_{\Welter[3]}(\ccA) &= 7 \oplus_2 5 \oplus_2 \oplus 3 \oplus_2 (2^{\ord_2(7 - 5) + 1} - 1) \oplus_2 (2^{\ord_2(7 - 3) + 1} - 1) \oplus_2 (2^{\ord_2(5 - 3) + 1} - 1) \\
 &= 1 \oplus_2 3 \oplus_2 7 \oplus_2 3 = 6.
\end{align*}
 
\end{example}

\section{\(p\)-Calm Subtraction Games}
\label{sec:org90bb760}
\label{org6e15e26}
We first recall \(p\)-saturations.
We then give a necessary condition for \(\Gamma^1\) to satisfy (PN) when \(\Gamma^2 = \Nim[1]\) (Lemma \ref{orgf5a2cb9}),
and define \(p\)-calm subtraction games.
We next prove that \(p\)-calm subtraction games satisfy (PN) and are closed under disjunctive sum (Theorem \ref{org25184ea}).
Finally, using the \(p\)-calmness of Welter's game, we generalize a property of Welter's game (Proposition \ref{org14ff579}).

\subsection{Notation}
\label{sec:org7c9f61e}
Fix an integer \(\ccp\) greater than 1, and let \(\Omega = \set{0,1,\ldots, p - 1}\).
For \(\cca, L \in \NN\), let \(\cca_L\) denote the \(L\)th digit in the \(p\)-adic expansion of \(\cca\).
Then
\[
 \cca = \sum_{L \in \NN} \cca_L p^L, \quad \cca_L \in \Omega.
\]
We write \(\cca = \pexp<\ccp>{\cca_0, \cca_1, \ldots}\).
If \(\ccN \in \NN\) and \(\cca_{L} = 0\) for \(\ccL \ge \ccN\), then we also write \(\cca = \pexp<\ccp>{\cca_0, \cca_1, \ldots, \cca_{\ccN - 1}}\) .
For example, if  \(p = 2\) and \(\cca = 14\), then \(\cca = \pexp<2>{0, 1, 1, 1, 0, 0, \ldots} = \pexp<2>{0, 1, 1, 1}\).
For \(\ccx, \ccy \in \Omega\), we write \(\ccx \oplus \ccy = \ccx \oplus_\ccp \ccy\) and
\(\ccx \ominus \ccy = \ccx \ominus_\ccp \ccy\), where \(\ominus_\ccp\) is the subtraction without borrowing in base \(\ccp\).
For example, if \(p = 5\), then \(2 \ominus_5 4 = 3\) and
\[
 13 \ominus_5 16 = \pexp<5>{3, 2} \ominus_5 \pexp<5>{1, 3} = \pexp<5>{3 \ominus 1, 2 \ominus 3} = \pexp<5>{2, 4} = 22.
\]

Before proceeding, we present a simple lemma.
 \begin{lemma}
 \comment{Lem.}
\label{sec:org7589b5c}
\label{org912a925}
For \(\cci \in \set{1, \ldots, \ccm}\), let \(\cca^\cci\) and \(\ccb^\cci\) be nonnegative integers.
If \(\cca^\cci \equiv \ccb^\cci \pmod{p^\ccN}\), then
\begin{equation}
\label{org212e187}
  \sum_\cci \cca^\cci - \ccb^\cci \equiv \bigopluspm[\cci][][\ccp][-0.35em][0.35em] \cca^\cci \ominus_\ccp \ccb^\cci \equiv \pexp<\ccp>{0, \ldots, 0, \bigoplus_\cci \cca^\cci_\ccN \ominus \ccb^\cci_\ccN} \pmod{p^{N + 1}}.
\end{equation}
 
\end{lemma}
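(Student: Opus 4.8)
The plan is to reduce the statement to a digit-by-digit comparison in base $p$, exploiting that both $\oplus_p$ and $\ominus_p$ act positionwise. First I would extract the only consequence of the hypothesis that is actually used: since $a^i \equiv b^i \pmod{p^N}$, the $p$-adic digits agree in the low positions, that is, $a^i_L = b^i_L$ for all $L < N$, and hence $a^i_L \ominus b^i_L = 0$ for $L < N$. From the positionwise definition of $\ominus_p$ this gives $a^i \ominus_p b^i \equiv p^N(a^i_N \ominus b^i_N) \pmod{p^{N+1}}$ for each $i$. Dually, writing $a^i - b^i = \sum_{L \ge N}(a^i_L - b^i_L)p^L$, the same digit agreement yields $a^i - b^i \equiv p^N(a^i_N - b^i_N) \pmod{p^{N+1}}$; and since $a^i_N, b^i_N \in \Omega$, we have $a^i_N - b^i_N \equiv a^i_N \ominus b^i_N \pmod p$, so in fact $a^i - b^i \equiv p^N(a^i_N \ominus b^i_N) \pmod{p^{N+1}}$ as well.

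With these two per-$i$ congruences in hand, I would sum, respectively $p$-Nim-sum, over $i \in \{1, \dots, m\}$. Adding the congruences for $a^i - b^i$ gives $\sum_i (a^i - b^i) \equiv p^N\sum_i (a^i_N \ominus b^i_N) \pmod{p^{N+1}}$; reducing the integer coefficient $\sum_i(a^i_N \ominus b^i_N)$ modulo $p$ (which is all that matters after multiplying by $p^N$ and working modulo $p^{N+1}$) and using that a $p$-Nim-sum of elements of $\Omega$ is just their ordinary sum modulo $p$, the right-hand side becomes $p^N\bigl(\bigoplus_i a^i_N \ominus b^i_N\bigr)$, which is exactly the last expression of \eqref{org212e187}. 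For the middle expression, note that each $a^i \ominus_p b^i$ has all digits in positions below $N$ equal to $0$ and digit $a^i_N \ominus b^i_N$ in position $N$; since $\oplus_p$ adds digits positionwise modulo $p$, the $p$-Nim-sum $\bigoplus_i (a^i \ominus_p b^i)$ again has zero digits below position $N$ and digit $\bigoplus_i (a^i_N \ominus b^i_N)$ in position $N$, hence is congruent modulo $p^{N+1}$ to the same quantity. Chaining the three congruences proves \eqref{org212e187}.

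The argument is essentially bookkeeping, so I do not expect a genuine obstacle; the only place that warrants care is keeping straight the two elementary facts that make the positionwise operations interact cleanly with reduction modulo a power of $p$ — namely that on a single digit $x \ominus_p y \equiv x - y \pmod p$, and that on $\Omega$ the operation $\oplus_p$ coincides with addition modulo $p$ — and making sure the low-order digits are genuinely annihilated by the hypothesis before the summation, so that the isolated digit in position $N$ really is the one that survives.
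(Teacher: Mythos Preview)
Your proof is correct and follows essentially the same approach as the paper's: establish the single-index congruence $a^i - b^i \equiv a^i \ominus_p b^i \equiv p^N(a^i_N \ominus b^i_N) \pmod{p^{N+1}}$ from the hypothesis, and then pass to the sum and the $p$-Nim-sum over $i$. The paper compresses this into two lines, leaving the summation step and the identifications $x - y \equiv x \ominus y \pmod p$, $\sum \equiv \bigoplus_p \pmod p$ on $\Omega$ implicit; you have simply made those bookkeeping steps explicit.
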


\begin{proof}
 \comment{Proof.}
\label{sec:org2884fd4}
Since \(\cca^\cci \equiv \ccb^\cci \pmod{p^\ccN}\), it follows that
\[
 \cca^\cci - \ccb^\cci \equiv \cca^\cci \ominus_\ccp \ccb^\cci \equiv \pexp<\ccp>{0, \ldots, 0, \cca^\cci_\ccN \ominus \ccb^\cci_\ccN} \pmod{p^{\ccN + 1}}.
\]
Hence (\ref{org212e187}) holds.
\end{proof}

\subsection{\(p\)-Saturations}
\label{sec:org3423156}
We define \(p\)-saturations.
For \(\cca \in \NN\), let \(\ord_p(\cca)\) denote the \(\ccp\)-adic order of \(\ccn\), that is,
\[
 \ord_\ccp(\cca) = \begin{cases}
 \max \set{\ccL \in \NN : \ccp^\ccL \mid \cca} & \tif \cca \neq 0, \\
 \infty & \tif \cca = 0.
 \end{cases}
\]
For example, \(\ord_2(12) = \ord_2(\pexp<2>{0,0,1,1}) = 2\) and \(\ord_3(12) = \ord_3(\pexp<3>{0,1,1}) = 1\).
Define
\[
 \SatSset{\ccm}{\ccp} = \Set{\ccC \in \NN^\ccm \setminus \set{(0,\ldots,0)}: \ord_p \left(\sum_{\cci} \ccc^i \right) = \mord_p(\ccC)},
\]
where \(\mord_p(\ccC) = \min \set{\ord_p(\ccc^i) : 1 \le \cci \le \ccm}\).
For example, 
\[
 (1, 0), (1, 3) \in \SatSset{2}{3} \quad \tand \quad (1, 2) \not \in \SatSset{2}{3} 
\]
because
\begin{align*}
 \ord_3(1 + 0) = 0 &= \min \set{\ord_3(1), \ord_3(0)} = \min \set{0, \infty}, \\
 \ord_3(1 + 3) = 0 &= \min \set{\ord_3(1), \ord_3(3)} = \min \set{0, 1}, \tand\\
 \ord_3(1 + 2) = 1 &> 0 = \min \set{\ord_3(1), \ord_3(2)} = \min \set{0, 0}.
\end{align*}
Note that \(\WeightOneSset{\ccm} \subseteq \SatSset{\ccm}{\ccp}\).
Let \(\Gamma = \Gamma(\cP, \sC)\) and \(\tilde{\Gamma} = \Gamma(\cP, \tilde{\sC})\).
The game \(\tilde{\Gamma}\) is called a \mbox{\emph{\(p\)-saturation}}\footnote{The definition of \(p\)-saturations is slightly generalized from that in \cite{irie-pSaturations-2018a}.} of \(\Gamma\) if
it has the same Sprague-Grundy function as \(\Gamma(\cP, \sC \cup \SatSset{\ccm}{\ccp})\), that is,
\begin{equation}
\label{org9902293}
 \sg_{\tilde{\Gamma}}(\ccA) = \sg_{\Gamma(\cP, \sC \cup \SatSset{\ccm}{\ccp})}(\ccA)
\end{equation}
for every \(\ccA \in \ccP\).
It is clear that \(\Gamma(\cP, \sC \cup \SatSset{\ccm}{\ccp})\) is a \(p\)-saturation of \(\Gamma\).
In this paper, we will consider a subtraction game \(\Gamma(\cP, \sC)\) satisfying
\begin{description}
\item[{(*)}] \(\sC \subseteq \SatSset{\ccm}{\ccp}\).
\end{description}
Note that if \(\Gamma\) satisfies (*), then \(\tilde{\Gamma}\) is a \(p\)-saturation of \(\Gamma\)
if and only if \(\sg_{\tilde{\Gamma}} = \sg_{\Gamma(\cP, \SatSset{\ccm}{\ccp})}\).
Moreover, if two subtraction games \(\Gamma^1\) and \(\Gamma^2\) satisfy (*), then so does \(\Gamma^1 + \Gamma^2\).

It is known that we can obtain base-\(p\) versions of some games by using \(p\)-saturations.

 \begin{example}[\hspace{0.01em}\cite{irie-pSaturations-2018a}]
 \comment{Exm. [\hspace{0.01em}\cite{irie-pSaturations-2018a}]}
\label{sec:org5907946}
\label{orgaef8186}
Let \(\tilde{\Gamma} = \Gamma(\NN^2, \SatSset{2}{3})\).
Table \ref{org579b3ba} shows the Sprague-Grundy values 
of some positions in \(\tilde{\Gamma}\).
It is easy to see that \(\sg_{\tilde{\Gamma}}(a,0) = \sg_{\tilde{\Gamma}}(0,a) = a\).
Since \((1,1) \in \SatSset{2}{3}\), it follows that
\((0,0)\) is an option of \((1,1)\). Thus \(\sg_{\tilde{\Gamma}}(1,1) = 2\).
We also see that \(\sg_{\tilde{\Gamma}}(1,2) = 0\) because \((0,0)\) 
is not an option of \((1, 2)\).
\begin{table}[H]
\centering
\begin{tabular}{c|cccc}
 & 0 & 1 & 2 & 3\\
\hline
0 & 0 & 1 & 2 & 3\\
1 & 1 & 2 & 0 & 4\\
2 & 2 & 0 & 1 & 5\\
3 & 3 & 4 & 5 & 6\\
\end{tabular}
\caption{Some Sprague-Grundy values in \(\Gamma(\Nim[2], \SatSset{2}{3})\).}
\label{org579b3ba}
\end{table}
\noindent
In general, let \(\tilde{\Gamma}\) be a \(p\)-saturation of Nim \(\Nim[m]\).
If \(\ccA\) is a position in \(\tilde{\Gamma}\), then
\begin{equation}
\label{orgcfffdda}
 \sg_{\tilde{\Gamma}}(\ccA) = \cca^1 \oplus_p \cdots \oplus_p \cca^\ccm.
\end{equation}
In other words, a \(p\)-saturation of Nim is a base-\(p\) version of Nim.
 
\end{example}

\begin{remark}
 \comment{Rem.}
\label{sec:org6ccbd19}
Note that \(\Nim[m]\) is a 2-saturation of itself.
This means that adding an edge \((\ccA, \ccB)\) with \(\ccA - \ccB \in \SatSset{\ccm}{2}\) to \(\Nim[\ccm]\) does not change its Sprague-Grundy function.
Incidentally, it is known that \(\Gamma(\NN^m, \sC)\) is a 2-saturation of \(\Nim[\ccm]\) if and only if
\(\WeightOneSset{\ccm} \subseteq \sC \subseteq \SatSset{\ccm}{2}\) \cite{blass-how-1998}.
 
\end{remark}

\begin{remark}
 \comment{Rem.}
\label{sec:orge1bd709}
\label{org646cf43}
Let \(\ccA\) and \(\ccB\) be two distinct elements of \(\NN^\ccm\) with \(\cca^\cci \ge \ccb^\cci\) for \(\cci \in \set{1, \ldots, \ccm}\).
We show that \(\ccA - \ccB \in \SatSset{\ccm}{\ccp}\) if and only if
\begin{equation}
\label{org0f068fa}
 \bigoplus_\cci \cca^\cci_\ccN \ominus \ccb^\cci_\ccN \neq 0,
\end{equation}
where \(\ccN = \mord_\ccp(\ccA - \ccB)\). Indeed, since \(\cca^\cci \equiv \ccb^\cci \pmod{\ccp^\ccN}\), it follows from Lemma \ref{org912a925} that
\[
  \sum_\cci \cca^\cci - \ccb^\cci \equiv \pexp<\ccp>{0, \ldots, 0, \bigoplus_\cci \cca^\cci_\ccN \ominus \ccb^\cci_\ccN} \pmod{\ccp^{\ccN + 1}}.
\]
Therefore \(\ccA - \ccB \in \SatSset{\ccm}{\ccp}\) if and only if (\ref{org0f068fa}) holds.
 
\end{remark}

 \begin{theorem}[\hspace{0.01em}\cite{irie-pSaturations-2018a}]
 \comment{Thm. [\hspace{0.01em}\cite{irie-pSaturations-2018a}]}
\label{sec:org27dc3e6}
\label{org28c6b37}
Let \(\tilde{\Gamma}\) be a \(p\)-saturation of Welter's game \(\Welter[m]\).
If \(\ccA\) is a position in \(\tilde{\Gamma}\), then
\begin{equation}
\label{orgb76245d}
 \sg_{\tilde{\Gamma}}(\ccA) = \cca^1 \oplus_p \cdots \oplus_p \cca^\ccm \oplus_p \left(\bigopluspm[i < j][][\ccp][-0.3em][0.3em] \ccp^{\ord_\ccp(\cca^i - \cca^j) + 1}  - 1 \right).
\end{equation}
In particular, \(\Welter[m]\) is a 2-saturation of itself.
 
\end{theorem}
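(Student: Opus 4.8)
The plan is to compute $\sg_{\tilde{\Gamma}}$ directly from the mex rule. First I would reduce to the maximal $p$-saturation: with $\cP = \set{\ccA \in \NN^m : \cca^i \neq \cca^j \tfor 1 \le i < j \le m}$, Welter's game is $\Welter[m] = \Gamma(\cP, \WeightOneSset{m})$, and since $\WeightOneSset{m} \subseteq \SatSset{m}{p}$ it satisfies $(*)$; hence, by the definition of a $p$-saturation, every $p$-saturation $\tilde{\Gamma}$ of $\Welter[m]$ satisfies $\sg_{\tilde{\Gamma}} = \sg_{\Gamma(\cP, \SatSset{m}{p})}$, so it is enough to prove the formula for $\tilde{\Gamma} = \Gamma(\cP, \SatSset{m}{p})$. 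By Remark \ref{org646cf43}, the options of a position $\ccA \in \cP$ in this game are exactly the $\ccB \in \cP$ with $\ccB \neq \ccA$, $\cca^i \ge \ccb^i$ for every $i$, and $\bigoplus_i (\cca^i_N \ominus \ccb^i_N) \neq 0$, where $N = \mord_p(\ccA - \ccB)$. Writing $f(\ccA)$ for the right-hand side of (\ref{orgb76245d}), I would prove $\sg_{\tilde{\Gamma}}(\ccA) = f(\ccA)$ by induction on $\sum_i \cca^i$; since every option of $\ccA$ removes at least one token in total, the induction hypothesis identifies the $\sg$-value of each option with its $f$-value, so by the mex rule it remains to show: (a) $f(\ccB) \neq f(\ccA)$ for every option $\ccB$ of $\ccA$; and (b) for every $v < f(\ccA)$ there is an option $\ccB$ of $\ccA$ with $f(\ccB) = v$.

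For (a), fix an option $\ccB$ and put $N = \mord_p(\ccA - \ccB)$, so that $\cca^i \equiv \ccb^i \pmod{p^N}$ for every $i$. Then for each pair $i < j$ one has $\ord_p(\cca^i - \cca^j) = \ord_p(\ccb^i - \ccb^j)$ whenever this value is smaller than $N$, and both exponents are $\ge N$ otherwise; consequently each pairwise term $p^{\ord_p(\cdot)+1} - 1$ has the same base-$p$ digits in positions $0, 1, \ldots, N$ for $\ccA$ as for $\ccB$, and hence so does the $p$-Nim-sum $\bigoplus_{i<j}(p^{\ord_p(\cdot)+1}-1)$ of these terms. On the other hand, the diagonal parts $\cca^1 \oplus_p \cdots \oplus_p \cca^m$ and $\ccb^1 \oplus_p \cdots \oplus_p \ccb^m$ agree in positions $0, \ldots, N - 1$ (because the $\cca^i$ and $\ccb^i$ do), and their $N$th digits differ, the difference being $\bigoplus_i (\cca^i_N \ominus \ccb^i_N) \neq 0$. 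Therefore $f(\ccA)$ and $f(\ccB)$ first differ in position $N$, whence $f(\ccB) \neq f(\ccA)$. For (b), given $v < f(\ccA)$, let $L$ be the largest position at which $v$ and $f(\ccA)$ differ, so that $v_L < f(\ccA)_L$; I would build an option $\ccB$ that agrees with $\ccA$ in all positions above $L$, decreases a single component in position $L$ (as in the Nim case, cf.\ (\ref{orgcfffdda})), and uses the remaining freedom in the positions below $L$ to simultaneously repair the contribution of the pairwise terms, keep the components of $\ccB$ pairwise distinct (so that $\ccB \in \cP$), and make the move $p$-saturated.

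The main obstacle is step (b). Unlike in Nim, the value $f(\ccA)$ couples the diagonal sum $\cca^1 \oplus_p \cdots \oplus_p \cca^m$ with the $\binom{m}{2}$ pairwise terms, and modifying a single component $\cca^i$ changes all of the exponents $\ord_p(\cca^i - \cca^j)$ at once, while the move must moreover stay inside $\cP$ and be $p$-saturated. Proving that, in spite of these constraints, the $f$-values of the options of $\ccA$ exhaust $\set{0, 1, \ldots, f(\ccA) - 1}$ --- in effect a matching argument pairing each target $v < f(\ccA)$ with a suitable option --- is the heart of the proof, and is exactly where the combinatorics specific to Welter's game (beyond that of Nim) enters; it generalizes Welter's treatment of the case $p = 2$, the enlarged $p$-saturated move set turning out to be just wide enough for the construction to succeed. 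Finally, the last assertion is the special case $p = 2$: there the right-hand side of (\ref{orgb76245d}) is precisely Welter's formula (Theorem \ref{orga5b1b7f}), so $\sg_{\Gamma(\cP, \SatSset{m}{2})} = \sg_{\Welter[m]}$, and since $\WeightOneSset{m} \subseteq \SatSset{m}{2}$ this is exactly the statement that $\Welter[m]$ is a $2$-saturation of itself.
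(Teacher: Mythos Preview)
This theorem is not proved in the present paper; it is quoted from \cite{irie-pSaturations-2018a}. The paper does indicate (at the start of Section~\ref{orgf2856b2}) that the proof in \cite{irie-pSaturations-2018a} goes through Proposition~\ref{org8a24e5c}, i.e., through the existence, for each position $\ccA$, of a descendant $\ccB$ with $\lg_{\Welter[m]}(\ccB) = \ppsi<p>(\ccB) = \ppsi<p>(\ccA)$, rather than through a direct mex verification.

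Your outline is a legitimate strategy, and part~(a) is carried out correctly---it is essentially the same computation the paper performs when proving that $\Welter[m]$ is $p$-calm (Theorem~\ref{org47862fb}). Your deduction of the final sentence (the case $p=2$) from the formula is also correct. The gap is part~(b): you state what the option $\ccB$ must achieve and candidly label this ``the main obstacle,'' but you do not construct $\ccB$ or argue that the freedom in base-$p$ positions below $L$ actually suffices to hit the prescribed value $v$, keep the components of $\ccB$ pairwise distinct, and keep $\ccA - \ccB \in \SatSset{m}{p}$. That construction is precisely the substance of the theorem; without it the proposal is a plan, not a proof. The route taken in \cite{irie-pSaturations-2018a}, via Proposition~\ref{org8a24e5c}, trades this ``hit every $v < f(\ccA)$'' step for a single existence statement about full descendants, which is a genuinely different organization of the same difficulty.
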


 \begin{example}
 \comment{Exm.}
\label{sec:orgbb00ce5}
\label{org6b1ef3c}
Let \(\tilde{\Gamma}\) be a \(5\)-saturation of \(\Welter[3]\) and
\(\ccA\) be the position (7, 5, 3) in \(\tilde{\Gamma}\).
It follows from Theorem \ref{org28c6b37} that
\begin{align*}
 \sg_{\tilde{\Gamma}}(\ccA) &= 7 \oplus_5 5 \oplus_5 3 \oplus_5 (5^{\ord_5(7 - 5) + 1} - 1) \oplus_5 (5^{\ord_5(7 - 3) + 1} - 1) \oplus_5 (5^{\ord_5(5 - 3) + 1} - 1) \\
 &= 10 \oplus_5 4 \oplus_5 4 \oplus_5 4 = 12.
\end{align*}
 
\end{example}

\subsection{\(p\)-Calm subtraction games}
\label{sec:org96eee00}
\label{org04afeb2}
Let \(\Gamma^1\) be a subtraction game satisfying (*).
The next lemma gives a necessary condition for \(\Gamma^1\) to satisfy (PN) when \(\Gamma^2 = \Nim[1]\).
We will show that this condition is sufficient in Section \ref{org61d2a73}.

 \begin{lemma}
 \comment{Lem.}
\label{sec:org521aa65}
\label{orgf5a2cb9}
Let \(\Gamma^1\) be a subtraction game \(\Gamma(\Position, \sC)\) with \(\sC \subseteq \SatSset{\ccm}{\ccp}\),
and let \(\Gamma = \Gamma^1 + \Nim[1]\) and \(\tilde{\Gamma}\) be a \(p\)-saturation of \(\Gamma\).
Suppose that
\begin{equation}
\label{orga871ec5}
 \sg_{\tilde{\Gamma}}(\ccA, \cca) = \sg_{\tilde{\Gamma}^1} (\ccA) \oplus_\ccp a
\end{equation}
for every position \((\ccA, \cca)\) in \(\tilde{\Gamma}\).
If \(\ccB\) is a proper descendant of \(\ccA\) in \(\Gamma^1\), then
\begin{equation}
\label{orgf9cb804}
 \sg_{\tilde{\Gamma}^1}(\ccA) - \sg_{\tilde{\Gamma}^1}(\ccB) \equiv \sum_{i} \cca^i - \ccb^i \pmod{\ccp^{\ccN + 1}},
\end{equation}
where \(\ccN = \mord_p(\ccA - \ccB)\) and \(\tilde{\Gamma}^1\) is a \(p\)-saturation of \(\Gamma^1\).
 
\end{lemma}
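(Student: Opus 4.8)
The plan is to extract the congruence $(\ref{orgf9cb804})$ from the hypothesis $(\ref{orga871ec5})$ by applying it to two carefully chosen positions in $\tilde{\Gamma}$ whose Sprague--Grundy values we can compare, and then to translate the resulting relation between $\sg_{\tilde\Gamma^1}(\ccA)$ and $\sg_{\tilde\Gamma^1}(\ccB)$ into an arithmetic statement via Lemma \ref{org912a925}. The key observation is that, since $\ccB$ is a proper descendant of $\ccA$ in $\Gamma^1$, we have $\cca^i \ge \ccb^i$ (at least after noting that subtraction-game moves decrease components), hence $\ccA - \ccB$ is a nonnegative vector, and $\cca^i \equiv \ccb^i \pmod{p^\ccN}$ with $\ccN = \mord_p(\ccA - \ccB)$. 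By Lemma \ref{org912a925}, $\sum_i \cca^i - \ccb^i \equiv \pexp<\ccp>{0,\ldots,0,\bigoplus_i \cca^i_\ccN \ominus \ccb^i_\ccN} \pmod{p^{\ccN+1}}$, so it suffices to show that $\sg_{\tilde\Gamma^1}(\ccA) - \sg_{\tilde\Gamma^1}(\ccB)$ lies in the same residue class mod $p^{\ccN+1}$.

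First I would set up the comparison in $\tilde\Gamma$. Consider the position $(\ccA, \cca)$ in $\tilde\Gamma$ for a suitable value $\cca$ to be chosen. Because $\ccB$ is a descendant of $\ccA$ in $\Gamma^1$ (hence also in $\tilde\Gamma^1$, since a $p$-saturation only adds moves), and because of how disjunctive sums combine moves, I can move within the $\Gamma^1$-component from $(\ccA,\cca)$ toward $(\ccB, \cca)$. The idea is to pick $\cca$ and an accompanying Nim-value so that the hypothesis $(\ref{orga871ec5})$, applied at both $(\ccA,\cdot)$ and $(\ccB,\cdot)$, forces the difference $\sg_{\tilde\Gamma^1}(\ccA) \ominus_p$-versus-$\sg_{\tilde\Gamma^1}(\ccB)$ to behave correctly modulo $p^{\ccN+1}$. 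More precisely, I expect to exploit the fact that whenever $(\ccA,\cca)$ has an option with a prescribed Sprague--Grundy value, the mex condition together with $(\ref{orga871ec5})$ pins down a relation; comparing the Nim-component's contribution $a$ (which we control freely) against $\sg_{\tilde\Gamma^1}(\ccA)$ and $\sg_{\tilde\Gamma^1}(\ccB)$ yields $(\ref{orgf9cb804})$. The cleanest route is probably: choose $\cca$ with $\cca \equiv \sg_{\tilde\Gamma^1}(\ccB) - \sg_{\tilde\Gamma^1}(\ccA) \pmod{p^{\ccN+1}}$ arranged so that $(\ccA,\cca)$ can reach $(\ccB, \cca')$ for an appropriate $\cca'$, use that $(\ccA - \ccB, 0) \in \SatSset{m+1}{p}$ exactly when $\bigoplus_i \cca^i_\ccN \ominus \ccb^i_\ccN \ne 0$ (Remark \ref{org646cf43}), and read off the digit-$\ccN$ constraint from whether $(\ccB,\cca')$ is or is not reachable.

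The main obstacle I anticipate is handling the two cases in Remark \ref{org646cf43} uniformly: when $\bigoplus_i \cca^i_\ccN \ominus \ccb^i_\ccN \ne 0$, the move $(\ccA,*) \to (\ccB,*)$ (with equal Nim-components) is available in the saturation $\tilde\Gamma$ itself, so $(\ref{orga871ec5})$ directly forces $\sg_{\tilde\Gamma^1}(\ccB) \ne \sg_{\tilde\Gamma^1}(\ccA)$ together with the mod-$p^{\ccN+1}$ matching; but when the digit sum vanishes, that direct move is absent, and I must instead route through an intermediate descendant $\ccC$ with $\mord_p(\ccA - \ccC) \le \ccN$ (or combine a $\Gamma^1$-move with a compensating Nim-move) and argue inductively on $\lg_{\Gamma^1}$ or on $\mord_p$. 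Keeping track of the Nim-component so that the invariant $(\ref{orga871ec5})$ can be invoked at each stage, and verifying that the intermediate positions still lie in $\ccP \times \NN$, is the delicate bookkeeping; but once the right $\cca'$ is chosen, the arithmetic collapses to Lemma \ref{org912a925} and the claim follows.
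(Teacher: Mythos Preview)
Your proposal is aimed in the right direction but does not yet contain a proof, and the route you sketch is more convoluted than necessary. The paper argues by contrapositive: assume \((\ref{orgf9cb804})\) fails for some \(\ccA\) and proper descendant \(\ccB\), set \(\alpha = \sg_{\tilde\Gamma^1}(\ccA)\), \(\beta = \sg_{\tilde\Gamma^1}(\ccB)\), and look at the single pair of positions \((\ccA,\ \beta \ominus_p \alpha)\) and \((\ccB,\ 0)\) in \(\tilde\Gamma\). Both evaluate to \(\beta\) under the formula \(\sg_{\tilde\Gamma^1}(\cdot)\oplus_p(\cdot)\), so \((\ref{orga871ec5})\) is violated once you show that \((\ccB,0)\) is an option of \((\ccA,\ \beta\ominus_p\alpha)\). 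That last check is a short two-case computation according to whether \(\ccM = \ord_p(\beta\ominus_p\alpha)\) is \(<\ccN\) or \(\ge\ccN\); in either case the \emph{Nim coordinate} supplies the nonzero digit needed to put the difference into \(\SatSset{m+1}{p}\). (A preliminary remark shows \(\alpha \neq \beta\), so \(\beta\ominus_p\alpha\) is a legitimate Nim position.)

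The gap in your plan is precisely this last point. You correctly flag as the ``main obstacle'' the case \(\bigoplus_i \cca^i_\ccN \ominus \ccb^i_\ccN = 0\), where \((\ccA,\cca)\to(\ccB,\cca)\) is \emph{not} a move in \(\tilde\Gamma\); but your proposed remedy---routing through an intermediate descendant \(\ccC\) and inducting on \(\lg_{\Gamma^1}\) or on \(\mord_p\)---is both vague and unnecessary. The whole point of adjoining \(\Nim[1]\) is that you may change the Nim coordinate simultaneously with the \(\Gamma^1\)-coordinates: picking the Nim drop \(\beta\ominus_p\alpha \to 0\) forces a nonzero contribution at digit \(\ccN\) (or at digit \(\ccM<\ccN\)) exactly when \((\ref{orgf9cb804})\) fails, so no induction or intermediate position is required. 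Your outline never commits to specific values of \(\cca\) and \(\cca'\), and as written the ``inductive'' branch has no clear base case or inductive hypothesis that would close the argument.
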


\begin{proof}
 \comment{Proof.}
\label{sec:orgfa9e2f2}
We may assume that \(\tilde{\Gamma} = \Gamma(\Position \times \NN, \SatSset{\ccm + 1}{\ccp})\) and
\(\tilde{\Gamma}^1 = \Gamma(\Position, \SatSset{\ccm}{\ccp})\) since \(\sC \subseteq \SatSset{\ccm}{\ccp}\).
Suppose that there are a position \(\ccA\) and its proper descendant \(\ccB\) not satisfying (\ref{orgf9cb804}), that is,
\begin{equation}
\label{orgbf36d76}
 \alpha - \beta \not \equiv \sum_{\cci} \cca^i - \ccb^i \pmod{\ccp^{\ccN + 1}},
\end{equation}
where \(\alpha = \sg_{\tilde{\Gamma}^1}(A)\) and \(\beta = \sg_{\tilde{\Gamma}^1}(B)\).
Note that \(\alpha \neq \beta\) because if \(\alpha = \beta\), then, by Lemma \ref{org912a925},
\[
 \pexp<\ccp>{0, \ldots, 0, \bigoplus_\cci \cca^i_{\ccN} \ominus \ccb^i_{\ccN}} \equiv \sum_{\cci} \cca^i - \ccb^i \not \equiv \alpha - \beta \equiv 0 \pmod{\ccp^{\ccN + 1}},
\]
so \(\ccB\) is an option of \(\ccA\), which is impossible.

We prove that (\ref{orga871ec5}) does not hold.
Consider the two positions \((A, \beta \ominus_p \alpha)\) and \((B, 0)\).
Note that
\[
 \sg_{\tilde{\Gamma}^1}(A) \oplus_p (\beta \ominus_p \alpha) = \beta\quad \tand \quad \sg_{\tilde{\Gamma}^1}(B) \oplus_p 0 = \beta.
\]
We show that \((B, 0)\) is an option of \((A, \beta \ominus_p \alpha)\) in \(\tilde{\Gamma}\), which will imply that (\ref{orga871ec5}) does not hold. 
Let \(\ccM = \ord_\ccp(\beta \ominus_{\ccp} \alpha)\).

 \begin{mycase}[\(\ccM < \ccN\)]
 \comment{Case. [\(\ccM < \ccN\)]}
\label{sec:org4aa460f}
We see that
\[
 \mord_{\ccp}((\ccA, \beta \ominus_{\ccp} \alpha) - (\ccB, 0)) = \min \set{\mord_\ccp(\ccA - \ccB), \ord_{\ccp}(\beta \ominus_{\ccp} \alpha)} = \min\set{\ccN, \ccM} = \ccM.
\]
Since \(a^i \equiv b^i \equiv 0 \pmod{\ccp^\ccN}\), it follows that
\[
 \biggl(\Bigl (\sum_{\cci} a^i - b^i \Bigr) + (\beta \ominus_{\ccp} \alpha - 0) \biggr)_{\ccM} = \beta_\ccM \ominus \alpha_\ccM \neq 0.
\]
Therefore \((B, 0)\) is an option of \((A, \beta \ominus_p \alpha)\) in \(\tilde{\Gamma}\).
 
\end{mycase}

 \begin{mycase}[\(\ccM \ge \ccN\)]
 \comment{Case. [\(\ccM \ge \ccN\)]}
\label{sec:orga5a7207}
Note that
\[
  \mord_{\ccp}((\ccA, \beta \ominus_{\ccp} \alpha) - (\ccB, 0)) = \min \set{\ccN, \ccM} = \ccN.
\]
By Lemma \ref{org912a925},
\begin{equation}
\label{org68f5fdb}
 \sum_\cci \cca^i - \ccb^i \equiv \pexp<\ccp>{0, \ldots, 0, \bigoplus_\cci \cca^i_\ccN \ominus \ccb^i_\ccN} \pmod{\ccp^{\ccN + 1}}
\end{equation}
and 
\begin{equation}
\label{org8fc12f7}
 \alpha - \beta \equiv \pexp<\ccp>{0, \ldots, 0, \alpha_\ccN \ominus \beta_\ccN} \pmod{\ccp^{\ccN + 1}}.
\end{equation}
By combining (\ref{orgbf36d76})--(\ref{org8fc12f7}),
\[
 \alpha_\ccN \ominus \beta_{\ccN} \neq \bigoplus_\cci \cca^i_\ccN \ominus \ccb^i_\ccN.
\]
Hence
\[
 \Bigl(\bigoplus_\cci \cca^i_\ccN \ominus \ccb^i_\ccN  \Bigr) \oplus (\beta_\ccN \ominus \alpha_\ccN) \neq 0.
\]
This implies that \((\ccB, 0)\) is an option of \((\ccA, \beta \ominus_\ccp \alpha)\) in \(\tilde{\Gamma}\). Therefore (\ref{orga871ec5}) does not hold.
 
\end{mycase} 
\end{proof}

\comment{connect}
\label{sec:org8bbeb24}
Let \(\Gamma\) be a subtraction game \(\Gamma(\cP, \sC)\) with \(\sC \subseteq \SatSset{\ccm}{\ccp}\).
The game \(\Gamma\) is said to be \emph{\(p\)-calm} if it satisfies (\ref{orgf9cb804}),
that is, for every position \(\ccA\) and every proper descendant \(\ccB\) of \(\ccA\),
\[
 \sg_{\tilde{\Gamma}}(\ccA) - \sg_{\tilde{\Gamma}}(\ccB) \equiv \sum_{i} \cca^i - \ccb^i \pmod{\ccp^{\ccN + 1}},
\]
where \(\tilde{\Gamma}\) is a \(p\)-saturation of \(\Gamma\) and \(\ccN = \mord_\ccp(\ccA - \ccB)\).
For example, \(\Nim[1]\) is \(p\)-calm.
Indeed, because \(\WeightOneSset{1} = \SatSset{1}{\ccp}\), we see that \(\Nim[1]\) is a \(p\)-saturation of itself. 
Since \(\sg_{\Nim[1]}(\cca) = \cca\) for \(\cca \in \NN\), it follows that \(\Nim[1]\) is \(p\)-calm.

\begin{remark}
 \comment{Rem.}
\label{sec:org09480f8}
\label{org1b0b16b}
There exist non-\(p\)-calm subtraction games.
For example, let \(\Gamma\) be the subtraction game \(\Gamma(\set{0, \ccp}, \WeightOneSset{1})\).
It is clear that \(\Gamma\) is a \(p\)-saturation of itself.
Since \(\sg_{\Gamma}(0) = \sg_{\Gamma}(\pexp<\ccp>{0}) = 0\) and \(\sg_{\Gamma}(\ccp) = \sg_{\Gamma}(\pexp<\ccp>{0,1}) = 1\), it follows that \(\Gamma\) is not \(p\)-calm.
 
\end{remark}

\subsection{A base-\(p\) Sprague-Grundy type theorem}
\label{sec:org36a3c62}
\label{org61d2a73}
The next theorem says that \(p\)-calm subtraction games satisfy (PN) and are closed under disjunctive sum.

 \begin{theorem}
 \comment{Thm.}
\label{sec:orgb694be2}
\label{org25184ea}
For \(i \in \set{1, \ldots, \cck}\), let \(\Gamma^i\) be a \(p\)-calm subtraction game.
Then the disjunctive sum \(\Gamma^1 + \cdots + \Gamma^\cck\) is \(p\)-calm.
Moreover, if \(\tilde{\Gamma}\) is a p-saturation of \(\Gamma^1 + \cdots + \Gamma^\cck\) and \(\bbA\) is a position in \(\tilde{\Gamma}\), then 
\[
 \sg_{\tilde{\Gamma}}(\bbA) = \sg_{\tilde{\Gamma}^1}(\ccA^1) \oplus_p \cdots \oplus_p \sg_{\tilde{\Gamma}^\cck}(\ccA^\cck),
\]
where \(\bbA = (\ccA^1, \ldots, \ccA^\cck)\) and \(\tilde{\Gamma}^i\) is a \(p\)-saturation of \(\Gamma^i\).
 
\end{theorem}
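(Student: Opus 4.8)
The plan is to argue by induction on \(\cck\). The case \(\cck = 1\) is trivial. If the statement holds for \(\cck - 1\) summands, then \(\Delta := \Gamma^1 + \cdots + \Gamma^{\cck - 1}\) is a \(\ccp\)-calm subtraction game with \(\sg_{\tilde{\Delta}} = \sg_{\tilde{\Gamma}^1} \oplus_\ccp \cdots \oplus_\ccp \sg_{\tilde{\Gamma}^{\cck - 1}}\); since \(\Gamma^1 + \cdots + \Gamma^\cck = \Delta + \Gamma^\cck\), applying the case \(\cck = 2\) to the pair \((\Delta, \Gamma^\cck)\) gives both assertions for \(\cck\), using associativity of \(\oplus_\ccp\). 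So it suffices to treat \(\cck = 2\). Write \(\Gamma^1 = \Gamma(\cP^1, \sC^1)\) and \(\Gamma^2 = \Gamma(\cP^2, \sC^2)\); both satisfy (*), hence so does \(\Gamma^1 + \Gamma^2\), and I may use the canonical \(\ccp\)-saturations \(\tilde{\Gamma}^i = \Gamma(\cP^i, \SatSset{m^i}{\ccp})\) and \(\tilde{\Gamma} = \Gamma(\cP^1 \times \cP^2, \SatSset{m^1 + m^2}{\ccp})\). A short computation with \(\mord_\ccp\) shows \((\ccC, 0, \ldots, 0) \in \SatSset{m^1 + m^2}{\ccp}\) if and only if \(\ccC \in \SatSset{m^1}{\ccp}\), and symmetrically for the second block; hence any option of \(\ccA^1\) in \(\tilde{\Gamma}^1\), with \(\ccA^2\) held fixed, is an option of \((\ccA^1, \ccA^2)\) in \(\tilde{\Gamma}\).

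The main content is the identity \(\sg_{\tilde{\Gamma}}(\ccA^1, \ccA^2) = \sg_{\tilde{\Gamma}^1}(\ccA^1) \oplus_\ccp \sg_{\tilde{\Gamma}^2}(\ccA^2) =: \gamma\), which I would prove by induction on \(\lg_{\tilde{\Gamma}}(\ccA^1, \ccA^2)\) using the \(\mex\) definition. Put \(\alpha_i = \sg_{\tilde{\Gamma}^i}(\ccA^i)\). First, no option of \((\ccA^1, \ccA^2)\) has Sprague-Grundy value \(\gamma\): if \((\ccB^1, \ccB^2)\) were one, set \(\beta_i = \sg_{\tilde{\Gamma}^i}(\ccB^i)\) (so \(\sg_{\tilde{\Gamma}}(\ccB^1, \ccB^2) = \beta_1 \oplus_\ccp \beta_2\) by the inductive hypothesis) and \(\ccN = \mord_\ccp\bigl((\ccA^1, \ccA^2) - (\ccB^1, \ccB^2)\bigr)\); combining Remark \ref{org646cf43} (membership in \(\SatSset{m^1 + m^2}{\ccp}\) forces the coordinatewise \(\bigoplus\) of the \(\ccN\)th digits of the difference to be nonzero), the \(\ccp\)-calmness of \(\Gamma^1\) and \(\Gamma^2\) applied to the two restrictions, and Lemma \ref{org912a925}, one finds that the \(\ccN\)th digit of \(\alpha_1 \oplus_\ccp \alpha_2\) differs from that of \(\beta_1 \oplus_\ccp \beta_2\), contradicting \(\alpha_1 \oplus_\ccp \alpha_2 = \gamma = \beta_1 \oplus_\ccp \beta_2\). (One splits according to whether \(\mord_\ccp(\ccA^1 - \ccB^1)\) is less than, equal to, or greater than \(\mord_\ccp(\ccA^2 - \ccB^2)\), with the degenerate cases \(\ccB^1 = \ccA^1\) and \(\ccB^2 = \ccA^2\) handled likewise; each reduces to a one-digit identity.)

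Second — the step I expect to be the main obstacle — every \(\delta \in \set{0, 1, \ldots, \gamma - 1}\) must be realized as the Sprague-Grundy value of some option. Let \(\ccL\) be the highest digit in which \(\delta\) and \(\gamma\) disagree, so \(\delta_\ccL < \gamma_\ccL = (\alpha_1)_\ccL \oplus (\alpha_2)_\ccL\). If \(\delta \ominus_\ccp \alpha_2 < \alpha_1\), move only in the first game, to an option \(\ccB^1\) of \(\ccA^1\) with \(\sg_{\tilde{\Gamma}^1}(\ccB^1) = \delta \ominus_\ccp \alpha_2\) (such an option exists as \(\delta \ominus_\ccp \alpha_2 < \alpha_1 = \sg_{\tilde{\Gamma}^1}(\ccA^1)\)), keeping \(\ccA^2\) fixed; this is legal in \(\tilde{\Gamma}\) by the first paragraph, and the inductive hypothesis gives its target Sprague-Grundy value \((\delta \ominus_\ccp \alpha_2) \oplus_\ccp \alpha_2 = \delta\). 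The symmetric move settles \(\delta \ominus_\ccp \alpha_1 < \alpha_2\). When both fail, a short digit argument shows \(\delta_\ccL < \min\set{(\alpha_1)_\ccL, (\alpha_2)_\ccL}\); then I would move in both games at once, to options \(\ccB^1, \ccB^2\) whose Sprague-Grundy values \(\beta_1 < \alpha_1\), \(\beta_2 < \alpha_2\) agree with \(\alpha_1, \alpha_2\) above digit \(\ccL\), satisfy \(\beta_1 \oplus_\ccp \beta_2 = \delta\), and with \(\beta_2\) agreeing with \(\alpha_2\) in every digit below \(\ccL\) (such a choice exists exactly because \(\delta_\ccL < \min\set{(\alpha_1)_\ccL, (\alpha_2)_\ccL}\)). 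By \(\ccp\)-calmness, \(\mord_\ccp(\ccA^i - \ccB^i) = \ord_\ccp(\alpha_i - \beta_i)\), so \(\ord_\ccp(\alpha_2 - \beta_2) = \ccL\) while \(\ord_\ccp(\alpha_1 - \beta_1) \le \ccL\). If this order is \(< \ccL\), the combined difference has \(\mord_\ccp\) equal to that of \(\ccA^1 - \ccB^1\), at which digit its coordinatewise \(\bigoplus\) equals that of \(\ccA^1 - \ccB^1\), hence is nonzero; by Remark \ref{org646cf43} the move lies in \(\SatSset{m^1 + m^2}{\ccp}\). If the order equals \(\ccL\), the \(\ccL\)th coordinatewise digit sum of the combined difference is \(\equiv \gamma_\ccL - \delta_\ccL \not\equiv 0 \pmod{\ccp}\) by \(\ccp\)-calmness and Lemma \ref{org912a925}, so the move is again legal. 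Either way the inductive hypothesis gives the target Sprague-Grundy value \(\beta_1 \oplus_\ccp \beta_2 = \delta\), completing the \(\mex\) computation and hence the formula.

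It remains to see that \(\Gamma^1 + \Gamma^2\) is \(\ccp\)-calm. For a position \((\ccA^1, \ccA^2)\) and a proper descendant \((\ccB^1, \ccB^2)\), with \(\ccN = \mord_\ccp\bigl((\ccA^1, \ccA^2) - (\ccB^1, \ccB^2)\bigr)\), substituting the formula just proved turns the left side of the required congruence (\ref{orgf9cb804}) into \((\alpha_1 \oplus_\ccp \alpha_2) - (\beta_1 \oplus_\ccp \beta_2)\); expanding modulo \(\ccp^{\ccN + 1}\) by the \(\ccp\)-calmness of \(\Gamma^1\) and \(\Gamma^2\) on the two restrictions and Lemma \ref{org912a925}, and again splitting on which of \(\mord_\ccp(\ccA^1 - \ccB^1)\), \(\mord_\ccp(\ccA^2 - \ccB^2)\) is smaller, shows it is congruent to the total of all coordinate differences of \((\ccA^1, \ccA^2)\) and \((\ccB^1, \ccB^2)\) — exactly (\ref{orgf9cb804}). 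This closes the induction on \(\cck\). The delicate point is the third paragraph: unlike in the classical Sprague-Grundy theorem, a move in a single summand need not reach a prescribed smaller Sprague-Grundy value, so one must use the extra moves created by \(\ccp\)-saturating the disjunctive sum, tracking the leading \(\ccp\)-adic digit of the simultaneous change in the two summands to keep the move legal.
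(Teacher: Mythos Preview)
Your proof is correct, but the (SG2) step is organized quite differently from the paper's. The paper works directly for general \(\cck\) (no reduction to \(\cck=2\)) and, crucially, does not construct \(\beta^1,\ldots,\beta^\cck\) by hand: it observes that \((\alpha^1,\ldots,\alpha^\cck)\) is a position in a \(\ccp\)-saturation of Nim with Sprague--Grundy value \(\alpha\) (Example~\ref{orgaef8186}), so the tuple \((\beta^1,\ldots,\beta^\cck)\) with \(\beta^i\le\alpha^i\), \(\bigoplus_\ccp\beta^i=\beta\), and \(\ord_\ccp(\sum_i\alpha^i-\beta^i)=\min_i\ord_\ccp(\alpha^i-\beta^i)\) is handed to you as an \emph{option} in that game. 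Your explicit digit construction in Case~3 recovers exactly this data for \(\cck=2\), so you are essentially reproving a special case of the Nim formula inside the argument. The paper also streamlines the bookkeeping by proving a single congruence
\[
\phi(\bbA)-\phi(\bbB)\equiv\sum_{i,j}\cca^{i,j}-\ccb^{i,j}\equiv\pexp<\ccp>{0,\ldots,0,\textstyle\bigoplus_{i,j}\cca^{i,j}_\ccN\ominus\ccb^{i,j}_\ccN}\pmod{\ccp^{\ccN+1}}
\]
once via Lemma~\ref{org2c3e6de}; this simultaneously yields (SG1), the legality check in (SG2), and the \(\ccp\)-calmness of the sum, avoiding your repeated case splits on which of \(\mord_\ccp(\ccA^1-\ccB^1)\), \(\mord_\ccp(\ccA^2-\ccB^2)\) is smaller. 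What your route buys is self-containment: you never invoke the \(\ccp\)-Nim-sum formula for Nim as an external input.
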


\comment{connect}
\label{sec:org2e95032}
To prove Theorem \ref{org25184ea}, we use the following simple lemma.

 \begin{lemma}
 \comment{Lem.}
\label{sec:org2322d02}
\label{org2c3e6de}
Let \(\Gamma\) be a \(p\)-calm subtraction game and \(\phi\) be the Sprague-Grundy function of its \(p\)-saturation.
If \(\ccA\) is a position in \(\Gamma\) and \(\ccB\) is its proper descendant, then
\begin{equation}
\label{orgbba0d43}
 \phi(\ccA) \ominus_\ccp \phi(\ccB) \equiv \phi(\ccA) - \phi(\ccB) \equiv  \pexp<\ccp>{0, \ldots, 0, \bigoplus_\cci \cca^\cci_\ccN \ominus \ccb^\cci_\ccN} \pmod{\ccp^{\ccN + 1}},
\end{equation}
where \(\ccN = \mord_\ccp(\ccA - \ccB)\).
 
\end{lemma}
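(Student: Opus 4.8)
The plan is to derive both congruences in (\ref{orgbba0d43}) by feeding the defining property of \(p\)-calmness into Lemma \ref{org912a925}, applied twice. First I would set up the arithmetic: since \(\ccB\) is a proper descendant of \(\ccA\) in a subtraction game, every move subtracts a vector in \(\NN^\ccm\), so \(\cca^\cci \ge \ccb^\cci\) for every \(\cci\); hence \(\ccA - \ccB \in \NN^\ccm\) and \(\ccN = \mord_\ccp(\ccA - \ccB) = \min \set{\ord_\ccp(\cca^\cci - \ccb^\cci) : 1 \le \cci \le \ccm}\) is finite (because \(\ccA \neq \ccB\)). In particular \(\ccp^\ccN \mid \cca^\cci - \ccb^\cci\), i.e. \(\cca^\cci \equiv \ccb^\cci \pmod{\ccp^\ccN}\), for every \(\cci\), so Lemma \ref{org912a925} gives
\[
 \sum_\cci \cca^\cci - \ccb^\cci \equiv \pexp<\ccp>{0, \ldots, 0, \bigoplus_\cci \cca^\cci_\ccN \ominus \ccb^\cci_\ccN} \pmod{\ccp^{\ccN + 1}}.
\]

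Next I would invoke \(p\)-calmness. Since \(\phi\) is the Sprague-Grundy function of a \(p\)-saturation of \(\Gamma\), condition (\ref{orgf9cb804}) applied to the position \(\ccA\) and its proper descendant \(\ccB\) reads \(\phi(\ccA) - \phi(\ccB) \equiv \sum_\cci \cca^\cci - \ccb^\cci \pmod{\ccp^{\ccN + 1}}\); substituting the display above proves the second congruence in (\ref{orgbba0d43}). For the first congruence I would observe that its right-hand side \(\pexp<\ccp>{0, \ldots, 0, \bigoplus_\cci \cca^\cci_\ccN \ominus \ccb^\cci_\ccN}\) is a multiple of \(\ccp^\ccN\), whence \(\phi(\ccA) \equiv \phi(\ccB) \pmod{\ccp^\ccN}\); applying Lemma \ref{org912a925} once more, this time to the single pair \(\cca^1 = \phi(\ccA)\), \(\ccb^1 = \phi(\ccB)\), yields \(\phi(\ccA) - \phi(\ccB) \equiv \phi(\ccA) \ominus_\ccp \phi(\ccB) \pmod{\ccp^{\ccN + 1}}\), as required.

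There is no real obstacle here: the statement is a bookkeeping consequence of Lemma \ref{org912a925} together with the definition of \(p\)-calmness. The only step that calls for a moment's care is the one where Lemma \ref{org912a925} is re-applied in the second part — it requires \(\phi(\ccA) \equiv \phi(\ccB) \pmod{\ccp^\ccN}\), and this is exactly what the digit form of their common difference, computed in the first part, delivers.
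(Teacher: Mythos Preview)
Your proposal is correct and follows essentially the same approach as the paper: apply Lemma~\ref{org912a925} to the coordinates to obtain the digit form of \(\sum_\cci \cca^\cci - \ccb^\cci\), combine with the \(p\)-calmness congruence (\ref{orgf9cb804}) to get the second congruence in (\ref{orgbba0d43}), and then deduce \(\phi(\ccA) \equiv \phi(\ccB) \pmod{\ccp^\ccN}\) so that Lemma~\ref{org912a925} applied to the single pair \(\phi(\ccA),\phi(\ccB)\) gives the first congruence. Your additional setup (verifying \(\cca^\cci \ge \ccb^\cci\) and finiteness of \(\ccN\)) is sound and slightly more explicit than the paper, but the argument is otherwise the same.
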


\begin{proof}
 \comment{Proof.}
\label{sec:orgfb286f3}
By Lemma \ref{org912a925},
\[
 \sum_\cci \cca^\cci - \ccb^\cci \equiv \bigopluspm[\cci][][\ccp][-0.35em][0.35em] \cca^\cci \ominus_\ccp \ccb^\cci \equiv \pexp<\ccp>{0, \ldots, 0, \bigoplus_\cci \cca^\cci_\ccN \ominus \ccb^\cci_\ccN} \pmod{\ccp^{\ccN + 1}}.
\]
Since \(\Gamma\) is \(p\)-calm, it follows that
\[
 \sum_\cci \cca^i - \ccb^i \equiv \phi(\ccA) - \phi(\ccB) \pmod{\ccp^{\ccN + 1}}.
\]
We show that \(\phi(\ccA) - \phi(\ccB) \equiv \phi(\ccA) \ominus_\ccp \phi(\ccB) \pmod{\ccp^{\ccN + 1}}\). 
Since 
\[
 \phi(\ccA) - \phi(\ccB) \equiv \sum_\cci \cca^i - \ccb^i \equiv 0 \pmod{\ccp^\ccN},
\]
we see that \(\phi(\ccA) \equiv \phi(\ccB) \pmod{\ccp^\ccN}\).
It follows from Lemma \ref{org912a925} that 
\[
  \phi(\ccA) - \phi(\ccB) \equiv \phi(\ccA) \ominus_\ccp \phi(\ccB) \pmod{\ccp^{\ccN + 1}}.
\]
Therefore (\ref{orgbba0d43}) holds.
\end{proof}

\begin{proof}[\bfseries proof of Theorem \ref{org25184ea}]
 \comment{Proof. [\bfseries proof of Theorem \ref{org25184ea}]}
\label{sec:org3cf56fe}
\rmfamily
For \(\cci \in \set{1, \ldots, \cck}\), we may assume that 
\(\tilde{\Gamma}^i = \Gamma(\cP^\cci, \SatSset{\ccm^\cci}{p})\) 
and \(\tilde{\Gamma} = \Gamma(\cP^1 \times \cdots \times \cP^k, \SatSset{\ccm}{\ccp})\),
where \(\ccm = \ccm^1 + \cdots + \ccm^\cck\). Let \(\phi^i(\ccA^i) = \sg_{\tilde{\Gamma}^i}(A^i)\) and
\(\phi(\bbA) = \phi^1(\ccA^1) \oplus_p \cdots \oplus_p \phi^\cck(\ccA^\cck)\).
To prove that \(\sg_{\tilde{\Gamma}}(\bbA) = \phi(\bbA)\), it suffices to show the following two statements.

\begin{description}
\item[{(SG1)}] If \(\bbB\) is an option of \(\bbA\) in \(\tilde{\Gamma}\), then \(\phi(\bbB) \neq \phi(\bbA)\).
\item[{(SG2)}] If \(0 \le \beta < \phi(\bbA)\), then \(\phi(\bbB) = \beta\) for some option \(\bbB\) of \(\bbA\) in \(\tilde{\Gamma}\).
\end{description}

Let \(\bbB\) be a proper descendant \((\ccB^1, \ldots, \ccB^\cck)\) of \(\bbA\) and let
\[
  \ccN = \mord_p(\bbA - \bbB) = \min \set{\ord_p(\cca^{\cci, \ccj} - \ccb^{\cci, \ccj}) : 1 \le \cci \le \cck, 1 \le \ccj \le \ccm^i},
\]
where \(\ccA^i = (\cca^{i, 1}, \ldots, \cca^{i, \ccm^i})\) and \(\ccB^i = (\ccb^{i, 1}, \ldots, \ccb^{i, \ccm^i})\).
We first show that
\begin{equation}
\label{orgb68d3f9}
 \phi(\bbA) - \phi(\bbB) \equiv \sum_{i,j} \cca^{i,j} - \ccb^{i,j} \equiv \pexp<\ccp>{0, \ldots, 0, \bigoplus_{i,j} a^{i,j}_\ccN \ominus b^{i,j}_{\ccN}} \pmod{\ccp^{\ccN + 1}}.
\end{equation}
Since \(\cca^{i, j} \equiv \ccb^{i, j} \pmod{\ccp^\ccN}\),
it follows from Lemma \ref{org912a925} that
\[
 \sum_{\cci, \ccj} \cca^{i, j} - \ccb^{i, j} \equiv \pexp<\ccp>{0, \ldots, 0, \bigoplus_{i, j} \cca^{i,j}_\ccN \ominus \ccb^{i, j}_\ccN} \pmod{\ccp^{\ccN + 1}}.
\]
Now, since \(\Gamma^i\) is \(p\)-calm, it follows from Lemma \ref{org2c3e6de} that
\begin{equation}
\label{orgd1ba4c5}
 \phi^i(\ccA^i) - \phi^i(\ccB^i) \equiv \phi^i(\ccA^i) \ominus_{\ccp} \phi^i(\ccB^i) \equiv \pexp<\ccp>{0, \ldots, 0, \bigoplus_{j} a^{i,j}_\ccN \ominus b^{i,j}_{\ccN}} \pmod{\ccp^{\ccN + 1}}.
\end{equation}
Moreover, since \(\phi^i(\ccA^i) \equiv \phi^i(\ccB^i) \pmod{\ccp^\ccN}\), we see that
\(\phi(\bbA) \equiv \phi(\bbB) \pmod{\ccp^\ccN}\). By Lemma \ref{org912a925} and (\ref{orgd1ba4c5}),
\begin{align*}
 \phi(\bbA) - \phi(\bbB) &\equiv \phi(\bbA) \ominus_\ccp \phi(\bbB) \\
 &\equiv \bigoplus_\cci \phi^i(\ccA^i) \ominus_{\ccp} \phi^i(\ccB^i) \equiv \pexp<\ccp>{0, \ldots, 0, \bigoplus_{i,j} a^{i,j}_\ccN \ominus b^{i,j}_{\ccN}} \pmod{\ccp^{\ccN + 1}}.
\end{align*}
Therefore (\ref{orgb68d3f9}) holds.

We now show (SG1).
Let \(\bbB\) be an option of \(\bbA\) and
\(\ccN = \mord_p(\bbA - \bbB)\).
Then 
\[
 \bigoplus_{\cci, \ccj} \cca^{\cci, \ccj}_\ccN \ominus \ccb^{\cci, \ccj}_\ccN \neq 0.
\]
By (\ref{orgb68d3f9}), (SG1) holds.

We next show (SG2).
Let \(\alpha^\cci = \phi^i(\ccA^i)\) and \(\alpha = \alpha^1 \oplus_p \cdots \oplus_p \alpha^\cck\) (\(= \phi(\bbA)\)).
Let \(\beta\) be an integer with \(0 \le \beta < \alpha\).
We first construct a descendant \(\bbB\) of \(\bbA\) with \(\phi(\bbB) = \beta\).
When we consider \((\alpha^1, \ldots, \alpha^\cck)\) as a position in a \(p\)-saturation of Nim,
its Sprague-Grundy value is equal to \(\alpha\) as we have mentioned in Example \ref{orgaef8186}.
Therefore there exist \(\beta^1, \ldots, \beta^k \in \NN\) satisfying the following three conditions:
\begin{enumerate}
\item \(\beta^\cci \le \alpha^\cci\).
\item \(\ord_p\left(\sum_\cci \alpha^\cci - \beta^\cci\right) = \min \set{\ord_p(\alpha^\cci - \beta^\cci) : 1 \le \cci \le \cck}\).
\item \(\beta^1 \oplus_p \cdots \oplus_p \beta^k = \beta\).
\end{enumerate}
\noindent
If \(\beta^\cci = \alpha^\cci\), then let \(\ccB^\cci = \ccA^\cci\).
If \(\beta^\cci < \alpha^\cci\), then, since \(\alpha^\cci = \phi^i(\ccA^i) = \sg_{\tilde{\Gamma}^i}(\ccA^i)\),
we see that
\(\ccA^i\) has an option \(\ccB^\cci\) such that \(\phi^i(\ccB^\cci) = \beta^\cci\) in \(\tilde{\Gamma}^i\).
Let \(\bbB = (\ccB^1, \ldots, \ccB^\cck)\). Then \(\phi(\bbB) = \beta^1 \oplus_p \cdots \oplus_p \beta^\cck = \beta\). 

We prove that \(\bbB\) is an option of \(\bbA\).
By (\ref{orgb68d3f9}), it suffices to show that \(\beta_\ccN \neq \alpha_\ccN\), where \(\ccN = \mord_p(\bbA - \bbB)\).
Let \(\ccN^i = \mord_p(\ccA^\cci - \ccB^\cci)\).
We first show that
\begin{equation}
\label{org2204764}
  \ccN^i = \ord_p (\alpha^\cci - \beta^\cci).
\end{equation}
Indeed, if \(\alpha^\cci = \beta^\cci\), then \(\ccA^\cci = \ccB^\cci\), so (\ref{org2204764}) holds.
Suppose that \(\alpha^\cci > \beta^\cci\).
Since \(\ccB^i\) is an option of \(\ccA^i\) and \(\Gamma^i\) is \(p\)-calm, it follows from Lemma \ref{org2c3e6de} that
\begin{align*}
 \ccN^i &= \ord_p \Big(\sum_j \cca^{i, j} - \ccb^{i, j }\Big) \\
 &= \ord_p \Big(\phi^i(\ccA^i) - \phi^i(\ccB^i) \Big) = \ord_p(\alpha^\cci - \beta^\cci).
\end{align*}
Hence (\ref{org2204764}) holds.
We now prove that \(\beta_\ccN \neq \alpha_\ccN\).
By (\ref{org2204764}) and (ii),
\begin{equation}
\label{org24d7a3b}
\begin{split}
 \ccN = \min \Set{\ccN^\cci : 1 \le \cci \le \cck} &= \min \Set{\ord_p(\alpha^\cci - \beta^\cci) : 1 \le \cci \le \cck} \\
 &= \ord_p\Big(\sum_\cci \alpha^\cci - \beta^\cci\Big).
\end{split}
\end{equation}
Now, since \(\ord_\ccp(\alpha^\cci - \beta^\cci) = \ccN^\cci \ge \ccN\),
we see that \(\alpha^\cci \equiv \beta^\cci \pmod{\ccp^\ccN}\).
It follows from Lemma \ref{org912a925} that
\[
 \Big(\sum_{\cci} \alpha^\cci - \beta^\cci \Big)_\ccN  = \bigoplus_{\cci} \alpha^i_N \ominus \beta^i_N   = \alpha_N \ominus \beta_N.
\]
By (\ref{org24d7a3b}), we see that \(\alpha_N \neq \beta_N\), and so \(\bbB\) is an option of \(\bbA\) in \(\tilde{\Gamma}\).
Therefore \(\phi(\bbA) = \sg_{\tilde{\Gamma}}(\bbA)\). In particular, 
if \(\bbA\) is a position and \(\bbB\) is its proper descendant, then, by (\ref{orgb68d3f9}),
\[
 \sg_{\tilde{\Gamma}}(\bbA)  - \sg_{\tilde{\Gamma}}(\bbB) = \phi(\bbA) - \phi(\bbB) \equiv \sum_{i,j} \cca^{i,j} - \ccb^{i,j} \pmod{\ccp^{\ccN + 1}},
\]
where \(\ccN = \mord_\ccp(\bbA - \bbB)\).
Hence \(\Gamma^1 + \cdots + \Gamma^\cck\) is \(p\)-calm.
\end{proof}

 \begin{corollary}
 \comment{Cor.}
\label{sec:org5505439}
\label{orgbaecf19}
If \(\Gamma\) is a subtraction game \(\Gamma(\cP, \sC)\) with \(\sC \subseteq \SatSset{\ccm}{\ccp}\),
then \(\Gamma\) and \(\Nim[1]\) satisfy \textup{(PN)}
if and only if \(\Gamma\) is \(p\)-calm.
 
\end{corollary}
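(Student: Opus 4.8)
The plan is to obtain both implications as immediate consequences of Lemma~\ref{orgf5a2cb9} and Theorem~\ref{org25184ea}, once one unwinds what (PN) means for the pair \((\Gamma, \Nim[1])\). Since \(\sg_{\Nim[1]}(\cca) = \cca\) and \(\Nim[1]\) is a \(p\)-saturation of itself, the assertion that \(\Gamma\) and \(\Nim[1]\) satisfy (PN) is exactly the statement that equation~(\ref{orga871ec5}) holds for every position \((\ccA, \cca)\) of a \(p\)-saturation \(\tilde{\Gamma}\) of \(\Gamma + \Nim[1]\), where \(\tilde{\Gamma}^1\) denotes a \(p\)-saturation of \(\Gamma\). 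Here we are implicitly using that, for a subtraction game satisfying (*), all of its \(p\)-saturations share a single Sprague-Grundy function (namely that of \(\Gamma(\cP, \SatSset{\ccm}{\ccp})\)), so the right-hand side of (PN) is unambiguous.

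For the ``only if'' direction I would assume that \(\Gamma\) and \(\Nim[1]\) satisfy (PN), so that (\ref{orga871ec5}) holds. Because \(\sC \subseteq \SatSset{\ccm}{\ccp}\) by hypothesis, Lemma~\ref{orgf5a2cb9} applies verbatim and yields (\ref{orgf9cb804}) for every position \(\ccA\) in \(\Gamma\) and every proper descendant \(\ccB\) of \(\ccA\); this is precisely the definition of \(p\)-calmness. For the ``if'' direction I would assume that \(\Gamma\) is \(p\)-calm and recall, as noted in the text, that \(\Nim[1]\) is \(p\)-calm as well (it is a \(p\)-saturation of itself, with identity Sprague-Grundy function). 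Applying Theorem~\ref{org25184ea} with \(\cck = 2\), \(\Gamma^1 = \Gamma\) and \(\Gamma^2 = \Nim[1]\), one concludes that for any \(p\)-saturation \(\tilde{\Gamma}\) of \(\Gamma + \Nim[1]\),
\[
 \sg_{\tilde{\Gamma}}(\ccA, \cca) = \sg_{\tilde{\Gamma}^1}(\ccA) \oplus_p \sg_{\Nim[1]}(\cca) = \sg_{\tilde{\Gamma}^1}(\ccA) \oplus_p \cca ,
\]
which is exactly (PN) for the pair \((\Gamma, \Nim[1])\).

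I do not expect a genuine obstacle here, as the argument is a direct invocation of the two preceding results; the corollary is essentially a restatement of ``Lemma~\ref{orgf5a2cb9} \(+\) Theorem~\ref{org25184ea}'' in the special case \(\Gamma^2 = \Nim[1]\). The only subtleties worth a sentence are the well-definedness of the \(p\)-Nim-sum in (PN) (handled above) and the verification that \(\Nim[1]\) meets the hypotheses of both Lemma~\ref{orgf5a2cb9} and Theorem~\ref{org25184ea}, which has already been carried out in the discussion following the definition of \(p\)-calmness.
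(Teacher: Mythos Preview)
Your proposal is correct and matches the paper's intended argument: the corollary is stated without its own proof precisely because it is the immediate combination of Lemma~\ref{orgf5a2cb9} (necessity) and Theorem~\ref{org25184ea} applied with \(\Gamma^2 = \Nim[1]\) (sufficiency, using that \(\Nim[1]\) is \(p\)-calm). Your unpacking of (PN) for the pair \((\Gamma,\Nim[1])\) and the remark on well-definedness of the \(p\)-saturation Sprague--Grundy function are exactly the small checks needed.
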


\begin{remark}
 \comment{Rem.}
\label{sec:org27d2438}
\label{org0eabe86}
We can generalize Corollary \ref{orgbaecf19} as follows.
Let \(\Gamma^1\) be a subtraction game \(\Gamma(\cP, \sC)\) with \(\sC \subseteq \SatSset{\ccm}{\ccp}\),
and let \(\Gamma^2\) be a \(p\)-calm subtraction game and
\(\tilde{\Gamma}^2\) be its \(p\)-saturation.
Suppose that there exists a position \(\ccA^2 \in \Position[\tilde{\Gamma}^2]\) such that
\(\sg_{\tilde{\Gamma}^2}(\ccA^2) = \alpha\) for every \(\alpha \in \NN\). 
Then \(\Gamma^1\) and \(\Gamma^2\) satisfy (PN)
if and only if \(\Gamma^1\) is \(p\)-calm.
We can prove this by the same argument as in the proof of Lemma \ref{orgf5a2cb9},
so we only sketch it.
Suppose that \(\Gamma^1\) is not \(p\)-calm.
Then there exist a position \(\ccA^1\) and its proper descendant \(\ccB^1\) 
such that 
\begin{equation}
\label{org394aa98}
 \alpha - \beta \not \equiv \sum_{\ccj} \cca^{1, \ccj} - \ccb^{1, \ccj} \pmod{\ccp^{\ccN + 1}},
\end{equation}
where \(\alpha = \sg_{\tilde{\Gamma}^1}(\ccA^1)\), \(\beta = \sg_{\tilde{\Gamma}^1}(\ccB^1)\), \(\tilde{\Gamma}^1\) is a \(p\)-saturation of \(\Gamma^1\),
and \(\ccN = \mord_\ccp(\ccA^1 - \ccB^1)\).
By assumption, \(\tilde{\Gamma}^2\) has a position \(\ccA^2\) such that
\(\sg_{\tilde{\Gamma}^2}(\ccA^2) = \beta \ominus_p \alpha (> 0)\).
This position \(\ccA^2\) has an option \(\ccB^2\) with \(\sg_{\tilde{\Gamma}^2}(\ccB^2) = 0\).
Let \(\bbA = (\ccA^1, \ccA^2)\) and \(\bbB = (\ccB^1, \ccB^2)\).
It is sufficient to show that \(\bbB\) is an option of \(\bbA\) in \(\tilde{\Gamma}\),
where \(\tilde{\Gamma} = \Gamma(\Position[\Gamma^1] \times \Position[\Gamma^2], \SatSset{\ccn}{\ccp})\).
Let \(\ccM = \mord_{\ccp}(\ccA^2 - \ccB^2)\) (\(= \ord_{\ccp}(\sum_{\ccj} \cca^{2, \ccj} - \ccb^{2, \ccj})\)).
Since \(\Gamma^2\) is \(p\)-calm, it follows from Lemma \ref{org2c3e6de} that
\begin{equation}
\label{org13d25f1}
 \sum_{\ccj} \cca^{2, \ccj} - \ccb^{2, \ccj} \equiv \beta \ominus_{\ccp} \alpha \equiv \pexp<\ccp>{0,\ldots,0, \beta_\ccM \ominus \alpha_\ccM} \pmod{\ccp^{\ccM + 1}}.
\end{equation}
Suppose that \(\ccM < \ccN\). Then \(\mord_{\ccp}(\bbA - \bbB) = \ccM\).
Since \((\sum \cca^{i, j} - \ccb^{i, j})_\ccM  = (\sum \cca^{2, j} - \ccb^{2, j})_{\ccM} \neq 0\), 
it follows that \(\bbB\) is an option of \(\bbA\).
Suppose that \(\ccM \ge \ccN\).
Then \(\mord_{\ccp}(\bbA - \bbB) = \ccN\).
It follows from (\ref{org394aa98}) and (\ref{org13d25f1}) that \(\alpha_\ccN \ominus \beta_\ccN \neq \bigoplus_{\ccj} \cca^{1, \ccj}_\ccN \ominus \ccb^{1, \ccj}_\ccN\).
Hence 
\[
 \Bigl(\sum_{\cci, \ccj} \cca^{i,j} - \ccb^{i,j}\Bigr)_\ccN = \Bigl(\bigoplus_\ccj \cca^{1, \ccj}_\ccN \ominus \ccb^{1, \ccj}_\ccN \Bigr) \oplus \beta_\ccN \ominus \alpha_\ccN \neq 0.
\]
This implies that \(\bbB\) is an option of \(\bbA\).
Therefore \(\Gamma^1\) and \(\Gamma^2\) do not satisfy (PN).
 
\end{remark}

\subsection{The \(p\)-calmness of Welter's game}
\label{sec:orgd054ab0}
For a position \(\ccA\) in Welter's game,
let \(\ppsi<p>(\ccA)\) denote the right-hand side of (\ref{orgb76245d}) in Theorem \ref{org28c6b37},
that is,
\begin{equation}
\label{orgd46f61a}
 \ppsi<\ccp>(\ccA) = \cca^1 \oplus_p \cdots \oplus_p \cca^\ccm \oplus_p \Bigl(\bigopluspm[i < j][][\ccp][-0.3em][0.3em] \ccp^{\ord_\ccp(\cca^i - \cca^j) + 1}  - 1 \Bigr).
\end{equation}

 \begin{theorem}
 \comment{Thm.}
\label{sec:orgf2afbc4}
\label{org47862fb}
Welter's game \(\Welter[\ccm]\) is \(p\)-calm.
In particular, if \(\tilde{\Gamma}\) is a \(p\)-saturation of \(\Welter[\ccm^1] + \cdots + \Welter[\ccm^\cck]\) and \(\bbA\) is a position in \(\tilde{\Gamma}\), then
\[
 \sg_{\tilde{\Gamma}}(\bbA) = \ppsi<\ccp>(\ccA^1) \oplus_\ccp \cdots \oplus_\ccp \ppsi<\ccp>(\ccA^\cck),
\]
where \(\bbA = (\ccA^1, \ldots, \ccA^\cck)\).
 
\end{theorem}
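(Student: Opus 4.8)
The plan is to deduce the second (``in particular'') assertion from the first together with Theorems~\ref{org28c6b37} and~\ref{org25184ea}, and to prove the first --- that $\Welter[\ccm]$ is $\ccp$-calm --- by a digitwise analysis of $\ppsi<\ccp>$.

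For the deduction: since $\WeightOneSset{\ccm} \subseteq \SatSset{\ccm}{\ccp}$, Welter's game $\Welter[\ccm]$ satisfies \textup{(*)}, and by Theorem~\ref{org28c6b37} the Sprague--Grundy function of each of its $\ccp$-saturations is $\ppsi<\ccp>$. So once we know that every $\Welter[\ccm^\cci]$ is $\ccp$-calm, applying Theorem~\ref{org25184ea} to $\Welter[\ccm^1], \ldots, \Welter[\ccm^\cck]$ gives both that $\Welter[\ccm^1] + \cdots + \Welter[\ccm^\cck]$ is $\ccp$-calm and that the Sprague--Grundy function of its $\ccp$-saturation is $\ppsi<\ccp>(\ccA^1) \oplus_\ccp \cdots \oplus_\ccp \ppsi<\ccp>(\ccA^\cck)$. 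Thus it remains to show, for a position $\ccA$ in $\Welter[\ccm]$ and a proper descendant $\ccB$, writing $\alpha = \ppsi<\ccp>(\ccA)$, $\beta = \ppsi<\ccp>(\ccB)$, and $\ccN = \mord_\ccp(\ccA - \ccB)$, that
\begin{equation*}
 \alpha - \beta \equiv \sum_\cci \cca^\cci - \ccb^\cci \pmod{\ccp^{\ccN + 1}}.
\end{equation*}
A move only subtracts a nonnegative vector, so $\cca^\cci \ge \ccb^\cci$, and in particular $\cca^\cci \equiv \ccb^\cci \pmod{\ccp^\ccN}$ for every $\cci$.

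The crux is a formula for the digits of $\ppsi<\ccp>$. Since $\oplus_\ccp$ is computed digitwise, and the $\ccL$th digit of $\ccp^{\ord_\ccp(\cca^\cci - \cca^\ccj) + 1} - 1$ equals $\ccp - 1$ precisely when $\ccp^\ccL \mid \cca^\cci - \cca^\ccj$ and equals $0$ otherwise, one reads off from (\ref{orgd46f61a}) that, for every $\ccL \in \NN$,
\begin{equation*}
 \alpha_\ccL \equiv \sum_\cci \cca^\cci_\ccL - R_\ccL(\ccA) \pmod{\ccp},
\end{equation*}
where $R_\ccL(\ccA)$ is the number of pairs $(\cci, \ccj)$ with $\cci < \ccj$ and $\cca^\cci \equiv \cca^\ccj \pmod{\ccp^\ccL}$. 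Because $\cca^\cci \equiv \ccb^\cci \pmod{\ccp^\ccN}$ for all $\cci$, the multiset of residues modulo $\ccp^\ccL$ of the components of $\ccA$ equals that of $\ccB$ whenever $\ccL \le \ccN$, so $R_\ccL(\ccA) = R_\ccL(\ccB)$. Feeding this into the digit formula shows that $\alpha$ and $\beta$ agree in digits $0, \ldots, \ccN - 1$, whence $\alpha \equiv \beta \pmod{\ccp^\ccN}$; and in digit $\ccN$ the terms $R_\ccN(\ccA)$ and $R_\ccN(\ccB)$ cancel, leaving
\begin{equation*}
 \alpha_\ccN \ominus \beta_\ccN = \bigoplus_\cci \cca^\cci_\ccN \ominus \ccb^\cci_\ccN.
\end{equation*}
Applying Lemma~\ref{org912a925} once to $\alpha, \beta$ (with $\ccm$ replaced by $1$) and once to $\cca^1, \ldots, \cca^\ccm$ and $\ccb^1, \ldots, \ccb^\ccm$ then identifies both sides of the required congruence with $\pexp<\ccp>{0, \ldots, 0, \bigoplus_\cci \cca^\cci_\ccN \ominus \ccb^\cci_\ccN}$ modulo $\ccp^{\ccN + 1}$, completing the proof.

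I expect the only real work to be the digit formula for $\alpha_\ccL$ --- keeping track of which cross terms $\ccp^{\ord_\ccp(\cca^\cci - \cca^\ccj)+1} - 1$ contribute a $\ccp - 1$ in a given digit --- together with the observation that the ``collision count'' $R_\ccL$ is unchanged by a move, since a move preserves all residues modulo $\ccp^\ccN$. After that, the argument is the same kind of digitwise bookkeeping already carried out in Lemma~\ref{org912a925} and in the proof of Theorem~\ref{org25184ea}.
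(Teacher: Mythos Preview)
Your proposal is correct and follows essentially the same approach as the paper: both arguments observe that the $L$th digit of each cross term $p^{\ord_p(a^i - a^j)+1}-1$ depends only on whether $p^L \mid a^i - a^j$, and since $a^i \equiv b^i \pmod{p^N}$ these cross terms agree modulo $p^{N+1}$, after which the digitwise comparison via Lemma~\ref{org912a925} finishes the job. Your repackaging of the cross-term contribution as a ``collision count'' $R_L(A)$ is a cosmetic variation of the paper's direct statement that $(p^{\ord_p(a^i-a^j)+1}-1)_L = (p^{\ord_p(b^i-b^j)+1}-1)_L$ for $0 \le L \le N$; the deduction of the ``in particular'' clause from Theorems~\ref{org28c6b37} and~\ref{org25184ea} is likewise what the paper intends.
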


\begin{proof}
 \comment{Proof.}
\label{sec:orgad08dac}
Let \(\ccA\) be a position in \(\Welter[\ccm]\) and
\(\ccB\) be its proper descendant. Set \(\ccN = \mord_p(\ccA - \ccB)\).
Then \(\cca^{\cci} \equiv \ccb^{\cci} \pmod{\ccp^\ccN}\). In particular,
\(\cca^{\cci} - \cca^{\ccj} \equiv \ccb^{\cci} - \ccb^{\ccj} \pmod{\ccp^\ccN}\).
Since
\[
 (\ccp^{\ord_{\ccp}(\cch) + 1} - 1)_{\ccL} = \begin{cases}
 \ccp - 1 & \tif \cch \equiv 0 \pmod{\ccp^\ccL}, \\
 0 & \tif \cch \not \equiv 0 \pmod{\ccp^\ccL}, \\
 \end{cases}
\]
we see that \((\ccp^{\ord_{\ccp}(\cca^\cci - \cca^\ccj) + 1} - 1)_\ccL = (\ccp^{\ord_{\ccp}(\ccb^\cci - \ccb^\ccj) + 1} - 1)_{\ccL}\) for \(0 \le \ccL \le \ccN\). Thus
\[
 \ccp^{\ord_{\ccp}(\cca^\cci - \cca^\ccj) + 1} - 1 \equiv \ccp^{\ord_{\ccp}(\ccb^\cci - \ccb^\ccj) + 1} - 1 \pmod{\ccp^{\ccN + 1}}.
\]
It follows from (\ref{orgd46f61a}) that
\[
 \ppsi<\ccp>(\ccA) - \ppsi<\ccp>(\ccB) \equiv 0 \equiv \sum_{i} a^i - b^i \pmod{\ccp^{\ccN}}
\]
and
\[
 (\ppsi<\ccp>(\ccA) - \ppsi<\ccp>(\ccB))_{\ccN} = \bigoplus_{\cci} \cca^i_\ccN \ominus \ccb^i_\ccN = \Bigl(\sum_{i} \cca^i - \ccb^i\Bigr)_N.
\]
Therefore Welter's game is \(p\)-calm.
\end{proof}

 \begin{example}
 \comment{Exm.}
\label{sec:org2b403ec}
\label{org162f640}
Let \(\Gamma\) be a \(5\)-saturation of \(\Welter[3] + \Welter[1]\) and
\(\bbA\) be the position \(((7, 5, 3), (3))\) in \(\Gamma\).
By Theorem \ref{org47862fb},
\[
 \sg_{\Gamma}(\bbA) = \ppsi<5>(7,5,3) \oplus_5 \ppsi<5>(3) = 12 \oplus_3 3 = 10.
\]
 
\end{example}

\subsection{Full positions in Welter's game}
\label{sec:org932c3ef}
\label{orgf2856b2}
Recall that Theorem \ref{org28c6b37} provides a formula for the Sprague-Grundy function of a \(p\)-saturation of Welter's game.
This theorem was proved using the next proposition.
In this section, we generalize this proposition to disjunctive sums of Welter's games using Theorem \ref{org47862fb}.

 \begin{proposition}[\hspace{0.01em}\cite{irie-pSaturations-2018a}]
 \comment{Prop. [\hspace{0.01em}\cite{irie-pSaturations-2018a}]}
\label{sec:org22ad4a8}
\label{org8a24e5c}
Every position \(\ccA\) in Welter's game \(\Welter[\ccm]\) has a descendant \(\ccB\) such that \(\lg_{\Welter[\ccm]}(\ccB) = \ppsi<\ccp>(\ccB) = \ppsi<\ccp>(\ccA)\).
 
\end{proposition}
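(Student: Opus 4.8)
The plan is to build the descendant $\ccB$ explicitly by repeatedly "pushing down" the coordinates of $\ccA$ until the position becomes full, i.e. until its length equals its $\ppsi<\ccp>$-value, while keeping $\ppsi<\ccp>$ invariant. First I would recall from \eqref{org84a24bd} that $\lg_{\Welter[\ccm]}(\ccA) = \sum_i \cca^i - \binom{m}{2}$, so $\lg_{\Welter[\ccm]}(\ccB)$ is minimized precisely when the coordinates of $\ccB$ are as small as possible, and a position of minimal length is one whose coordinate set is $\set{0, 1, \ldots, m-1}$. The key observation is that moving from $\ccA$ to such a minimal position is a proper descent in $\Welter[\ccm]$ (each step removes tokens from a single heap, keeping all coordinates distinct), so it suffices to exhibit a descent $\ccA = \ccA_0 \to \ccA_1 \to \cdots \to \ccA_r = \ccB$ along which $\ppsi<\ccp>$ never changes, ending at the position $\ccB$ with coordinate multiset $\set{0,\ldots,m-1}$.

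The main step is therefore: given a non-minimal position $\ccC$, find an option $\ccC'$ of $\ccC$ in $\Welter[\ccm]$ with $\ppsi<\ccp>(\ccC') = \ppsi<\ccp>(\ccC)$. I would argue as follows. Since $\ccC$ is not minimal, some coordinate $\ccc^i$ exceeds $m-1$, hence there is a value $\ccd < \ccc^i$ with $\ccd \notin \set{\ccc^1, \ldots, \ccc^m}$; choose the largest such $\ccd$, so that $\ccC' \coloneqq \ccC$ with $\ccc^i$ replaced by $\ccd$ is a legal option. The delicate point is that this single step need not preserve $\ppsi<\ccp>$. Instead, following the structure of Welter's original argument and Sato's analysis, I would instead move $\ccc^i$ down by exactly one unit at a time and track how the summand $\ccc^i \oplus_p \cdots$ together with the cross-terms $\ccp^{\ord_\ccp(\ccc^i - \ccc^j)+1} - 1$ change; alternatively — and this is the cleaner route — I would invoke the \(p\)-calmness of $\Welter[\ccm]$ (Theorem \ref{org47862fb}) together with the identity $\sg_{\tilde\Gamma} = \ppsi<\ccp>$ for a \(p\)-saturation $\tilde\Gamma$ (Theorem \ref{org28c6b37}): by the definition of $p$-calmness, for any proper descendant $\ccB$ of $\ccA$ in $\Welter[\ccm]$ we have $\ppsi<\ccp>(\ccA) - \ppsi<\ccp>(\ccB) \equiv \sum_i \cca^i - \ccb^i \pmod{\ccp^{\ccN+1}}$ with $\ccN = \mord_\ccp(\ccA - \ccB)$. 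Thus if I can reach a descendant $\ccB$ of $\ccA$ that is \emph{full} in the sense $\lg_{\Welter[\ccm]}(\ccB) = \ppsi<\ccp>(\ccB)$, the relation $\ppsi<\ccp>(\ccA) = \ppsi<\ccp>(\ccB)$ will be forced once I separately verify $\lg_{\Welter[\ccm]}(\ccA) \ge \ppsi<\ccp>(\ccA)$ and that $\lg$ decreases by at least as much as $\ppsi<\ccp>$ along each step.

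Concretely, the cleanest self-contained line I would pursue is: (1) show $\ppsi<\ccp>(\ccA) \le \lg_{\Welter[\ccm]}(\ccA) = \sum_i \cca^i - \binom m2$ for every position $\ccA$, by bounding each $\ccp^{\ord_\ccp(\cca^i - \cca^j)+1} - 1$ against $|\cca^i - \cca^j|$ and summing; (2) observe that the minimal position $\ccB_{\min}$ with coordinate set $\set{0,1,\ldots,m-1}$ is a descendant of $\ccA$ and satisfies $\lg_{\Welter[\ccm]}(\ccB_{\min}) = 0$, hence $\ppsi<\ccp>(\ccB_{\min}) = 0$ by (1); (3) walk from $\ccA$ down to $\ccB_{\min}$ one unit at a time, and at the \emph{first} step where $\ppsi<\ccp>$ strictly drops, stop at the position $\ccB$ just before: then $\ppsi<\ccp>(\ccB) = \ppsi<\ccp>(\ccA)$, and I must check $\ccB$ is full. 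The last verification is the real obstacle — I expect to need a careful digit-by-digit computation (of the type in the proof of Theorem \ref{org47862fb}) showing that as long as $\ccB$ is not full, the next unit step can be chosen to keep $\ppsi<\ccp>$ fixed, so the walk only terminates at a full position. I would organize this as an induction on $\lg_{\Welter[\ccm]}(\ccA)$: either $\ccA$ is already full and we are done, or $\ccA$ has an option $\ccA'$ with $\ppsi<\ccp>(\ccA') = \ppsi<\ccp>(\ccA)$ and $\lg_{\Welter[\ccm]}(\ccA') < \lg_{\Welter[\ccm]}(\ccA)$, and we apply the inductive hypothesis to $\ccA'$; existence of such an $\ccA'$ is exactly the digit computation above, and that is where the bulk of the work lies.
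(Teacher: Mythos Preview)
The paper does not actually prove this proposition; it is quoted from \cite{irie-pSaturations-2018a} and used as a black box. So there is no in-paper proof to compare against. That said, your proposal has two concrete problems worth flagging.

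First, there is a circularity issue. The paper states explicitly (just before Proposition~\ref{org8a24e5c}) that Theorem~\ref{org28c6b37}---the identity \(\sg_{\tilde\Gamma}=\ppsi<\ccp>\) for \(p\)-saturations of Welter's game---``was proved using the next proposition.'' Your ``cleaner route'' invokes Theorem~\ref{org28c6b37} (and Theorem~\ref{org47862fb}, whose statement presupposes it) to prove Proposition~\ref{org8a24e5c}, which reverses the logical order of the original development. If you want a self-contained argument you must avoid assuming \(\ppsi<\ccp>=\sg_{\tilde\Gamma}\).

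Second, your ``self-contained'' line (1)--(3) does not close. In (1), the proposed bound fails: \(p^{\ord_p(\cca^i-\cca^j)+1}-1\) can exceed \(|\cca^i-\cca^j|\) (e.g.\ \(p=2\), \(\cca^i-\cca^j=2\) gives \(3>2\)), and in any case \(\ppsi<\ccp>\) is a \(\oplus_\ccp\)-sum, not an ordinary sum, so termwise bounds do not transfer. Without (1) you cannot even assert \(\ppsi<\ccp>(\ccA)\le\lg_{\Welter[\ccm]}(\ccA)\) non-circularly. More seriously, the heart of the induction---``as long as \(\ccB\) is not full, the next unit step can be chosen to keep \(\ppsi<\ccp>\) fixed''---is precisely the content of the proposition, and you concede it is ``where the bulk of the work lies'' without supplying it. Note too that if one \emph{is} allowed to use \(\ppsi<\ccp>=\sg_{\tilde\Gamma}\), then no option of \(\ccA\) in \(\tilde\Gamma\) can share its \(\ppsi<\ccp>\)-value (that is what being a Sprague--Grundy function means), so the inductive step cannot be a single \(\tilde\Gamma\)-move; the argument in \cite{irie-pSaturations-2018a} is genuinely more delicate than a one-step descent.
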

 \begin{example}
 \comment{Exm.}
\label{sec:org68dd2b0}
\label{org9f6734a}
Let \(\ccA\) be the position \((6, 4, 2)\) in \(\Welter[3]\).
Then
\begin{align*}
 \ppsi<2>(\ccA) &= 6 \oplus_2 4 \oplus_2 2 \oplus_2 (2^{\ord_2(6 - 4) + 1} - 1) \oplus_2 (2^{\ord_2(6 - 2) + 1} - 1) \oplus_2 (2^{\ord_2(4 - 2) + 1} - 1) \\
 &= 0 \oplus_2 3 \oplus_2 7 \oplus_2 3 = 7.
\end{align*}
Proposition \ref{org8a24e5c} says that \(\ccA\) has a descendant \(\ccB\)
such that \(\lg_{\Welter[3]}(\ccB) = \ppsi<2>(\ccB) = 7\).
Indeed, if \(\ccB = (5, 3, 2)\), then 
\[
 \lg_{\Welter[3]}(\ccB) = 5 + 3 + 2 - {3 \choose 2} = 7
\]
and
\begin{align*}
 \ppsi<2>(\ccB) &= 5 \oplus_2 3 \oplus_2 2 \oplus_2 (2^{\ord_2(5 - 3) + 1} - 1) \oplus_2 (2^{\ord_2(5 - 2) + 1} - 1) \oplus_2 (2^{\ord_2(3 - 2) + 1} - 1) \\
 &= 4 \oplus_2 3 \oplus_2 1 \oplus_2 1 = 7.
\end{align*}
 
\end{example}

\begin{remark}
 \comment{Rem.}
\label{sec:org2986881}
\label{org4706195}
In general, if \(\ccA\) is a position in an impartial game \(\Gamma\), then
\begin{equation}
\label{orgec20038}
 \sg_{\Gamma}(\ccA) \le \lg_{\Gamma}(\ccA).
\end{equation}
If \(\sg_{\Gamma}(\ccA) = \lg_{\Gamma}(\ccA)\), then \(\ccA\) is said to be \emph{full}.
Consider the following condition on an impartial game \(\Gamma\):

\begin{description}
\item[{(FD)}] Every position \(\ccA\) in \(\Gamma\) has a full descendant \(\ccB\) with the same Sprague-Grundy value as \(\ccA\).
\end{description}

\noindent
It is easy to show that a \(p\)-saturation of Nim satisfies the condition (FD).
It follows from Proposition \ref{org8a24e5c} and Theorem \ref{org28c6b37} that 
a \(p\)-saturation of Welter's game also satisfies (FD).
Our aim is to prove that a \(p\)-saturation of disjunctive sums of Welter's games satisfies (FD).
To this end, we show two lemmas.
 
\end{remark}

 \begin{lemma}
 \comment{Lem.}
\label{sec:org6281f17}
\label{org21901d4}
Let \(\ccA\) be a full position in an impartial game \(\Gamma\).
If \(\lg_{\Gamma}(\ccA) > 0\), then \(\ccA\) has a full option \(\ccB\) with \(\lg_{\Gamma}(\ccB) = \lg_{\Gamma}(\ccA) - 1\).
In particular, if \(0 \le \beta \le \lg_{\Gamma}(\ccA) - 1\), then \(\ccA\) has a full descendant \(\ccB\) with \(\lg_\Gamma(\ccB) = \beta\).
 
\end{lemma}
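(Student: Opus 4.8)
The plan is to prove Lemma~\ref{org21901d4} directly from the definitions of the length function $\lg_{\Gamma}$ and the Sprague-Grundy function $\sg_{\Gamma}$, together with the general bound $\sg_{\Gamma}(\ccA) \le \lg_{\Gamma}(\ccA)$ recalled in Remark~\ref{org4706195}. Since $\ccA$ is full, $\sg_{\Gamma}(\ccA) = \lg_{\Gamma}(\ccA) =: \ell > 0$, so $\ell$ is in particular a positive Sprague-Grundy value. By the $\mex$ definition of $\sg_{\Gamma}(\ccA)$, every value in $\set{0, 1, \ldots, \ell - 1}$ is attained by some option of $\ccA$; in particular there is an option $\ccB$ with $\sg_{\Gamma}(\ccB) = \ell - 1$.

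The key remaining point is that this particular $\ccB$ is itself full and has length exactly $\ell - 1$. First, by the definition of the length function, any option $\ccB$ of $\ccA$ satisfies $\lg_{\Gamma}(\ccB) \le \lg_{\Gamma}(\ccA) - 1 = \ell - 1$, since a longest walk from $\ccB$ extends to a walk from $\ccA$ of length one greater. On the other hand, the general inequality~(\ref{orgec20038}) applied at $\ccB$ gives $\ell - 1 = \sg_{\Gamma}(\ccB) \le \lg_{\Gamma}(\ccB)$. Combining these two inequalities forces $\lg_{\Gamma}(\ccB) = \ell - 1 = \sg_{\Gamma}(\ccB)$, so $\ccB$ is a full option of $\ccA$ with $\lg_{\Gamma}(\ccB) = \lg_{\Gamma}(\ccA) - 1$, which is the first assertion.

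For the second assertion I would argue by induction on $\lg_{\Gamma}(\ccA)$. If $0 \le \beta \le \lg_{\Gamma}(\ccA) - 1$, then since $\lg_{\Gamma}(\ccA) > 0$ the first part produces a full option $\ccB_0$ with $\lg_{\Gamma}(\ccB_0) = \lg_{\Gamma}(\ccA) - 1$. If $\beta = \lg_{\Gamma}(\ccA) - 1$ we are done with $\ccB = \ccB_0$; otherwise $0 \le \beta \le \lg_{\Gamma}(\ccB_0) - 1$ and $\lg_{\Gamma}(\ccB_0) < \lg_{\Gamma}(\ccA)$, so by the induction hypothesis $\ccB_0$ has a full descendant $\ccB$ with $\lg_{\Gamma}(\ccB) = \beta$; this $\ccB$ is also a descendant of $\ccA$, completing the induction.

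I do not expect any serious obstacle here: the whole argument is a short squeeze between the two monotonicity facts $\lg_{\Gamma}(\ccB) \le \lg_{\Gamma}(\ccA) - 1$ for options and $\sg_{\Gamma}(\ccB) \le \lg_{\Gamma}(\ccB)$ in general. The only mild subtlety is to be careful that the finiteness hypothesis built into the definition of an impartial game guarantees $\lg_{\Gamma}$ is a well-defined nonnegative integer, so that the induction on $\lg_{\Gamma}(\ccA)$ is legitimate; this is immediate from the standing assumption that the maximum length of a walk from each vertex is finite.
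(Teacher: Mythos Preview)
Your proof is correct and follows essentially the same approach as the paper: pick an option $\ccB$ with $\sg_{\Gamma}(\ccB) = \lg_{\Gamma}(\ccA) - 1$ via the $\mex$ definition, then squeeze using $\sg_{\Gamma}(\ccB) \le \lg_{\Gamma}(\ccB) \le \lg_{\Gamma}(\ccA) - 1$ to force fullness. The paper leaves the ``in particular'' clause implicit, whereas you spell out the induction on $\lg_{\Gamma}(\ccA)$, but this is just a matter of detail.
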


\begin{proof}
 \comment{Proof.}
\label{sec:orga2159d2}
Since \(\ccA\) is full, it follows that \(\sg_{\Gamma}(\ccA) = \lg_{\Gamma}(\ccA) > 0\).
Hence \(\ccA\) has an option \(\ccB\) with \(\sg_{\Gamma}(\ccB) = \sg_{\Gamma}(\ccA) - 1 = \lg_{\Gamma}(\ccA) - 1\).
Since \(\sg_{\Gamma}(\ccB) \le \lg_{\Gamma}(\ccB) \le \lg_{\Gamma}(\ccA) - 1\), we see that \(\sg_{\Gamma}(\ccB) = \lg_{\Gamma}(\ccB) = \lg_{\Gamma}(\ccA) - 1\).
Thus \(\ccB\) is the desired option of \(\ccA\).
\end{proof}

 \begin{lemma}
 \comment{Lem.}
\label{sec:org6b4e570}
\label{org6895abb}
For \(\cci \in \set{1, \ldots, \cck}\), let \(\Gamma^i\) be an impartial game satisfying \textup{(FD)}, 
and let \(\Gamma\) be an impartial game with position set \(\Position[\Gamma^1] \times \cdots \times \Position[\Gamma^\cck]\).
If, for \(\bbA = (\ccA^1, \ldots, \ccA^\cck) \in \Position[\Gamma]\),
\begin{equation}
\label{org924b465}
 \sg_{\Gamma}(\bbA) = \sg_{\Gamma^1}(\ccA^1) \oplus_\ccp \cdots \oplus_\ccp \sg_{\Gamma^\cck}(\ccA^\cck)
\end{equation}
and
\begin{equation}
\label{org28b7cc2}
 \lg_{\Gamma}(\bbA) = \lg_{\Gamma^1}(\ccA^1) + \cdots + \lg_{\Gamma^\cck}(\ccA^\cck),
\end{equation}
then \(\Gamma\) satisfies \textup{(FD)}.
 
\end{lemma}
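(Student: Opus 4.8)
The plan is to prove that $\Gamma$ satisfies \textup{(FD)} by induction on $\lg_\Gamma(\bbA) = \sum_\cci \lg_{\Gamma^\cci}(\ccA^\cci)$ (using (\ref{org28b7cc2})), where $\bbA = (\ccA^1, \ldots, \ccA^\cck)$. Write $\alpha^\cci = \sg_{\Gamma^\cci}(\ccA^\cci)$, $\ell^\cci = \lg_{\Gamma^\cci}(\ccA^\cci)$, $\alpha = \sg_\Gamma(\bbA)$, $\ell = \lg_\Gamma(\bbA)$, so that $\alpha = \alpha^1 \oplus_\ccp \cdots \oplus_\ccp \alpha^\cck$ and $\ell = \ell^1 + \cdots + \ell^\cck$. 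The first ingredient I would set up is a description of the full positions of $\Gamma$: since a $\ccp$-Nim-sum never exceeds the ordinary sum, (\ref{org924b465}), (\ref{org28b7cc2}), and (\ref{orgec20038}) force that $\bbB = (\ccB^1, \ldots, \ccB^\cck)$ is full in $\Gamma$ if and only if each $\ccB^\cci$ is full in $\Gamma^\cci$ and the $\ccp$-adic digits of $\sg_{\Gamma^1}(\ccB^1), \ldots, \sg_{\Gamma^\cck}(\ccB^\cck)$ add digit by digit with no carrying, i.e.\ $\sg_{\Gamma^1}(\ccB^1) \oplus_\ccp \cdots \oplus_\ccp \sg_{\Gamma^\cck}(\ccB^\cck) = \sg_{\Gamma^1}(\ccB^1) + \cdots + \sg_{\Gamma^\cck}(\ccB^\cck)$.

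For the inductive step, if $\bbA$ is full I take $\bbB = \bbA$. Otherwise $\alpha < \ell$, and by the characterization above either some $\ccA^\cci$ is not full, or every $\ccA^\cci$ is full but there is a carry among the $\alpha^\cci$. In the first case I would use \textup{(FD)} for $\Gamma^\cci$ to replace each $\ccA^\cci$ by a full descendant $\ccC^\cci$ of it with $\sg_{\Gamma^\cci}(\ccC^\cci) = \alpha^\cci$; the position $(\ccC^1, \ldots, \ccC^\cck)$ then has the same Sprague--Grundy value $\alpha$ and strictly smaller $\lg_\Gamma$, so once it is known to be a descendant of $\bbA$ in $\Gamma$ the inductive hypothesis finishes. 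In the second case I view $(\alpha^1, \ldots, \alpha^\cck)$ as a position of a $\ccp$-saturation of Nim; it has Sprague--Grundy value $\alpha$ and is not full (the carry), so by \textup{(FD)} for $\ccp$-saturations of Nim (Example \ref{orgaef8186}, Remark \ref{org4706195}) there are carry-free $\beta^1, \ldots, \beta^\cck$ with $\beta^\cci \le \alpha^\cci = \ell^\cci$, $\beta^1 \oplus_\ccp \cdots \oplus_\ccp \beta^\cck = \alpha$, and $\beta^1 + \cdots + \beta^\cck = \alpha$; applying Lemma \ref{org21901d4} in each $\Gamma^\cci$ to the full position $\ccA^\cci$ yields a full $\ccB^\cci$ below $\ccA^\cci$ with $\sg_{\Gamma^\cci}(\ccB^\cci) = \beta^\cci$, and then $\bbB_0 = (\ccB^1, \ldots, \ccB^\cck)$ is full in $\Gamma$ with Sprague--Grundy value $\alpha$, which is what we want once $\bbB_0$ is known to be a descendant of $\bbA$.

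The main obstacle is exactly the parenthetical ``once it is known to be a descendant of $\bbA$'': the hypotheses (\ref{org924b465}) and (\ref{org28b7cc2}) pin down $\sg_\Gamma$ and $\lg_\Gamma$ but not the edge set of $\Gamma$, so one cannot literally alter a single coordinate and stay inside $\Gamma$. The way I would close this gap is to run the induction through the length recursion of $\Gamma$ itself: from $\bbA$ pass to an option along which $\lg_\Gamma$ drops by exactly one (one exists by (\ref{org28b7cc2}) and the defining recursion of $\lg_\Gamma$), apply the inductive hypothesis and Lemma \ref{org21901d4} to what is reached, and use the $\mex$-property of $\sg_\Gamma$ to force a full descendant of the correct length and Sprague--Grundy value to appear; the genuinely delicate case is when the longest available move temporarily pushes the Sprague--Grundy value below $\alpha$, and it is there that \textup{(FD)} for the components $\Gamma^\cci$ (and for $\ccp$-saturations of Nim), as used in the construction of $\bbB_0$, is what guarantees a full descendant of length at least $\alpha$ nonetheless exists. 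Everything else --- the fullness characterization, the split into the two cases, the trimming via Lemma \ref{org21901d4}, and the digit bookkeeping for the $\beta^\cci$ --- I expect to be routine.
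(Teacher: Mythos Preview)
Your second case---all $\ccA^\cci$ full but a carry occurs---is essentially what the paper does, and your appeal to \textup{(FD)} for $p$-saturations of Nim to produce carry-free $\beta^\cci$ is a clean substitute for the paper's explicit digit construction. The paper, however, does not induct and does not split into cases. It sets $M = \max\{L : \alpha^1_L + \cdots + \alpha^\cck_L \ge \ccp\}$ (with $\max\emptyset = -1$), defines $\beta = (\ccp^{M+1}-1) + \sum_{L > M}\alpha_L \ccp^L \ge \alpha$, and writes down by hand a carry-free splitting $\beta = \beta^1 + \cdots + \beta^\cck$ with $\beta^\cci \le \alpha^\cci$ (keep digits above $M$, trim digit $M$ to total $\ccp-1$, dump all lower digits into one coordinate). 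Then \textup{(FD)} in $\Gamma^\cci$ together with Lemma \ref{org21901d4} yields full $\ccB^\cci$ with $\sg_{\Gamma^\cci}(\ccB^\cci)=\beta^\cci$, so $\bbB=(\ccB^1,\ldots,\ccB^\cck)$ is full with $\sg_\Gamma(\bbB)=\beta\ge\alpha$; one more application of Lemma \ref{org21901d4} inside $\Gamma$ brings the value down to $\alpha$. This single pass replaces both your case split and the outer induction.

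On the obstacle you flag: you are right that (\ref{org924b465}) and (\ref{org28b7cc2}) do not by themselves force componentwise descendants to be descendants in $\Gamma$, but the paper simply uses this without comment---it is the intended reading, and it holds in the only application (Proposition \ref{org14ff579}), since $\SatSset{\ccm^\cci}{\ccp}$ embeds in $\SatSset{\ccm}{\ccp}$. Your proposed workaround, by contrast, does not close the gap. If every option of $\bbA$ in $\Gamma$ has Sprague--Grundy value strictly below $\alpha$ (nothing in the stated hypotheses excludes this), then the inductive hypothesis only produces full descendants with value below $\alpha$, and Lemma \ref{org21901d4} cannot raise a value. The clause ``it is there that \textup{(FD)} for the components $\Gamma^\cci$ \ldots\ guarantees a full descendant of length at least $\alpha$ exists'' is circular: it reasserts the componentwise construction whose $\Gamma$-descendant status was precisely the point in question. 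Either take the componentwise-descendant property as a tacit hypothesis (as the paper does) or add it explicitly; routing the argument through arbitrary options of $\Gamma$ does not succeed.
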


\begin{proof}
 \comment{Proof.}
\label{sec:org689e0c0}
Let \(\bbA\) be a position \((\ccA^1, \ldots, \ccA^\cck)\) in \(\Gamma\),
and let \(\alpha^i = \sg_{\Gamma^{\cci}}(\ccA^\cci)\) and \(\alpha = \sg_{\Gamma}(\bbA) = \alpha^1 \oplus_p \cdots \oplus_p \alpha^\cck\).
By Lemma \ref{org21901d4}, it suffices to show that 
\(\bbA\) has a full descendant \(\bbB\) such that \(\sg_{\Gamma}(\bbB) \ge \alpha\).
Let
\[
 \ccM = \max \Set{\ccL \in \NN : \alpha^1_L + \cdots + \alpha^\cck_L \ge p},
\]
where \(\max \emptyset = -1\).
Define
\[
 \beta = (p^{\ccM + 1} - 1) + \sum_{\ccL \ge \ccM + 1} \alpha_\ccL p^{\ccL} = \pexp<\ccp>{p - 1, \ldots, p - 1, \alpha_{\ccM + 1}, \alpha_{\ccM + 2}, \ldots}.
\]
Then \(\beta \ge \alpha\). We will show that \(\bbA\) has a full descendant \(\bbB\) such that \(\sg_{\Gamma}(\bbB) = \beta\).

We first show that there exist \(\beta^1, \ldots, \beta^k \in \NN\) satisfying the following two conditions:
\begin{enumerate}
\item \(\beta^i \le \alpha^i\).
\item \(\beta^1 + \cdots + \beta^k = \beta^1 \oplus_p \cdots \oplus_p \beta^k = \beta\).
\end{enumerate}
If \(\ccM = -1\), then \(\alpha^1, \ldots, \alpha^\cck\) satisfy the two conditions.
Suppose that \(\ccM \ge 0\).
For \(\ccL \ge \ccM + 1\), let \(\beta^i_\ccL = \alpha^i_\ccL\).
Since \(\alpha^1_\ccM + \cdots + \alpha^\cck_\ccM \ge p\),
there exist 
\(\beta^1_\ccM, \ldots, \beta^\cck_\ccM\) such that
\[
 \beta^i_\ccM \le \alpha_M^i \quad \tand \quad \beta^1_\ccM + \cdots + \beta^k_\ccM = p - 1 = \beta_\ccM.
\]
By rearranging \(\alpha^\cci\) if necessary,
we may assume that \(\beta^1_{\ccM} < \alpha^1_{\ccM}\).
For \(\ccL \le \ccM - 1\), let \(\beta^1_L = p - 1\) and \(\beta^i_L = 0\) for \(i \ge 2\).
Let \(\beta^i = \pexp<\ccp>{\beta^i_0, \beta^i_1, \ldots}\).
Then \(\beta^1, \ldots, \beta^\cck\) satisfy (i) and (ii).

Since \(\Gamma^i\) satisfies (FD), it follows from Lemma \ref{org21901d4} that \(\ccA^\cci\) has a full descendant \(\ccB^i\) 
such that \(\sg_{\Gamma^\cci}(\ccB^i) = \beta^i\).
Let \(\bbB = (\ccB^1, \ldots, \ccB^k)\).
Then 
\[
 \sg_{\Gamma}(\bbB) = \beta^1 \oplus_p \cdots \oplus_p \beta^k = \beta.
\]
Since 
\[
 \beta = \beta^1 + \cdots + \beta^\cck = \lg_{\Gamma^1}(\ccB^1) + \cdots + \lg_{\Gamma^\cck}(\ccB^\cck) = \lg_{\Gamma}(\ccB),
\]
it follows that \(\bbB\) is the desired descendant of \(\bbA\).
\end{proof}

 \begin{proposition}
 \comment{Prop.}
\label{sec:org24f939c}
\label{org14ff579}
A \(p\)-saturation of a disjunctive sum of Welter's games satisfies \textup{(FD)}.
 
\end{proposition}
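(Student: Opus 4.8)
The plan is to reduce Proposition~\ref{org14ff579} to Lemma~\ref{org6895abb} by verifying its two hypotheses, (\ref{org924b465}) and (\ref{org28b7cc2}), for a $p$-saturation $\tilde{\Gamma}$ of a disjunctive sum $\Welter[\ccm^1] + \cdots + \Welter[\ccm^\cck]$, together with the observation that each factor $\tilde{\Gamma}^i$ (a $p$-saturation of $\Welter[\ccm^i]$) satisfies (FD). The last point is already recorded in Remark~\ref{org4706195}: it follows from Proposition~\ref{org8a24e5c} and Theorem~\ref{org28c6b37}, since a $p$-saturation of Welter's game is full at a suitable descendant of every position and has the explicit Sprague-Grundy function $\ppsi<\ccp>$.

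First I would check the Sprague-Grundy condition (\ref{org924b465}). By Theorem~\ref{org47862fb}, Welter's game $\Welter[\ccm^i]$ is $p$-calm, so Theorem~\ref{org25184ea} applies and tells us precisely that for a $p$-saturation $\tilde{\Gamma}$ of $\Welter[\ccm^1] + \cdots + \Welter[\ccm^\cck]$,
\[
 \sg_{\tilde{\Gamma}}(\bbA) = \sg_{\tilde{\Gamma}^1}(\ccA^1) \oplus_p \cdots \oplus_p \sg_{\tilde{\Gamma}^\cck}(\ccA^\cck),
\]
which is exactly (\ref{org924b465}) with $\Gamma = \tilde{\Gamma}$ and $\Gamma^i = \tilde{\Gamma}^i$. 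Here one should note that the $\tilde{\Gamma}^i$ appearing in Theorem~\ref{org25184ea} can be taken to be arbitrary $p$-saturations of the factors, so there is no mismatch with the $p$-saturations used in the (FD) statement for each factor.

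Next I would verify the length condition (\ref{org28b7cc2}): a maximal walk in a disjunctive sum is obtained by concatenating maximal walks in the summands, so $\lg_{\tilde{\Gamma}}(\bbA) = \sum_i \lg_{\tilde{\Gamma}^i}(\ccA^i)$. This is essentially immediate from the definition of the disjunctive sum and the fact that a $p$-saturation has the same position set as the original game; moreover, since a $p$-saturation only adds edges of the form $\ccA - \ccB \in \SatSset{\ccm}{\ccp}$ where $\SatSset{\ccm}{\ccp} \subseteq \NN^\ccm$ consists of vectors whose coordinates are bounded by those of legitimate moves, the maximal walk length in the saturation agrees with that in $\Welter[\ccm^i]$; in any event the formula (\ref{org84a24bd}) shows $\lg_{\Welter[\ccm^i]}$ is additive under disjunctive sum since both sides are just (sum of components) minus a binomial constant. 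With (\ref{org924b465}), (\ref{org28b7cc2}), and (FD) for each $\tilde{\Gamma}^i$ in hand, Lemma~\ref{org6895abb} yields that $\tilde{\Gamma}$ satisfies (FD), which is the assertion. The only mild subtlety — and the place I would be most careful — is confirming that the length of a maximal walk is genuinely additive across the summands of a $p$-saturated disjunctive sum rather than merely for the unsaturated games; this hinges on the fact that the added moves in a $p$-saturation strictly decrease the total token count (since $\SatSset{\ccm}{\ccp}$ excludes the zero vector and each added move subtracts a nonnegative vector), so any walk in $\tilde{\Gamma}$ still has length at most $\sum_i \lg_{\tilde{\Gamma}^i}(\ccA^i)$, while the reverse inequality is clear from concatenating maximal walks in the factors.
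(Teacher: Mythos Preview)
Your proposal is correct and follows essentially the same approach as the paper: apply Lemma~\ref{org6895abb} to $\tilde{\Gamma}$ by invoking Theorem~\ref{org47862fb} for the Sprague-Grundy identity (\ref{org924b465}), the additivity of walk length for (\ref{org28b7cc2}), and Remark~\ref{org4706195} for (FD) on each factor $\tilde{\Gamma}^i$. The only difference is cosmetic: the paper declares the length additivity ``obvious,'' whereas you spell out the argument that every move in $\SatSset{\ccm}{\ccp}$ strictly decreases the total coordinate sum, so the saturation does not increase walk lengths beyond those already realized by weight-one moves.
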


\begin{proof}
 \comment{Proof.}
\label{sec:org94701da}
For \(\cci \in \set{1, \ldots, \cck}\), let \(\Gamma^i = \Welter[\ccm^\cci]\) and \(\Gamma = \Gamma^1 + \cdots + \Gamma^\cck\).
Let \(\tilde{\Gamma}\) be a \(p\)-saturation of \(\Gamma\).
It is obvious that \(\lg_{\tilde{\Gamma}}(\bbA) = \lg_{\tilde{\Gamma}^1}(\ccA^1) + \cdots + \lg_{\tilde{\Gamma}^k}(\ccA^k)\),
where \(\tilde{\Gamma}^i\) is a \(p\)-saturation of \(\Gamma^i\).
Moreover, it follows from Theorem \ref{org47862fb} that
\(\sg_{\tilde{\Gamma}}(\bbA) = \sg_{\tilde{\Gamma}^1}(\ccA^1) \oplus_p \cdots \oplus_p \sg_{\tilde{\Gamma}^k}(\ccA^k)\).
Since \(\tilde{\Gamma}^1, \ldots, \tilde{\Gamma}^\cck\) satisfy (FD),
it follows from Lemma \ref{org6895abb} that so does \(\tilde{\Gamma}\).
\end{proof}

 \begin{example}
 \comment{Exm.}
\label{sec:org6cc8150}
\label{org52d18b3}
Let \(\Gamma = \Welter[3] + \Welter[2]\)
and \(\bbA\) be the position \(((6,5,2), (3,1))\) in \(\Gamma\).
Note that \(\Gamma\) is a 2-saturation of itself and
\[
 \sg_{\Gamma}(\bbA) = \ppsi<2>(6, 5, 2) \oplus_2 \ppsi<2>(3, 1) = 6 \oplus_2 1 = 7.
\]
By Proposition \ref{org14ff579}, the position \(\bbA\) has a full descendant \(\bbB\)
such that \(\lg_{\Gamma}(\bbB) = \ppsi<2>(\bbB) = 7\).
Indeed, if \(\bbB = ((4,3,1), (3,0))\), then
\[
 \sg_{\Gamma}(\bbB) = \ppsi<2>(4, 3, 1) \oplus_2 \ppsi<2>(3,0) = 5 \oplus_2 2 =  7
\]
and
\[
 \lg_\Gamma(\bbB) = \lg_{\Welter[3]}(4,3,1) + \lg_{\Welter[2]}(3,0) = 5 + 2 = 7.
\]
 
\end{example}

\section{The \(p'\)-Component Theorem}
\label{sec:org3ab6cdc}
\label{org3ea736a}
By rewriting Proposition \ref{org14ff579} in terms of Young diagrams, we generalize the \(p'\)-component theorem.
First, we describe Welter's game as a game with Young diagrams.
We then state the \(p\)'-component theorem.
Finally, we generalize this theorem by using Proposition \ref{org14ff579}.

In this section, we assume that \(\ccp\) is a prime.

\subsection{Welter's game and Young diagrams}
\label{sec:org97d9085}
\label{org1dc745b}

For \(n \in \NN\), a \emph{partition} \(\lambda\) of \(n\) is a tuple \((\lambda^1, \ldots, \lambda^m)\) of positive integers such that \(\sum \lambda^i = n\) and \(\lambda^1 \ge \cdots \ge \lambda^m\).
For example, \((4, 4, 1)\) is a partition of 9 and \(()\) is a partition of 0. 
If \(\lambda\) is a partition \((\lambda^1, \ldots, \lambda^m)\), then
\[
 \set{(i, j) \in \NN^2 : 1 \le \cci \le \ccm,\ 1 \le \ccj \le \lambda^\cci}
\]
is called the \emph{Young diagram} or the \emph{Ferrers diagram} corresponding to \(\lambda\).
We will identify a partition with its Young diagram.
A Young diagram \(\ccY\) can be visualized by using \(\size{\ccY}\) cells. For example,
Figure \ref{fig:orgc88e075} shows the Young diagram \((5, 4, 3)\).

\begin{figure}[htbp]
\centering
\includegraphics[scale=0.6]{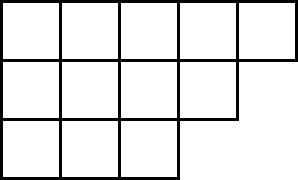}
\caption{\label{fig:orgc88e075}The Young diagram (5,4,3).}
\end{figure}

For a position \(A\) in \(\Welter[\ccm]\), let
\[
 \ccY(\ccA) = (\cca^{\sigma(1)} - \ccm + 1, \cca^{\sigma(2)} - \ccm + 2, \ldots, \cca^{\sigma(\ccm)}),
\]
where \(\sigma\) is a permutation with \(\cca^{\sigma(1)} > \cca^{\sigma(2)} > \cdots > \cca^{\sigma(\ccm)}\).
We consider \(\ccY(\ccA)\) as a Young diagram by ignoring zeros in \(\ccY(\ccA)\).
For example, if \(\ccA = (3, 7, 5)\) and \(\ccA' = (5, 9, 7, 1, 0)\), then
\[
 \ccY(\ccA) = (7 - 2, 5 - 1, 3) = (5, 4, 3) \tand \ccY(\ccA') = (9 - 4, 7 - 3, 5 - 2, 1 - 1, 0) = (5, 4, 3, 0, 0),
\]
so \(\ccY(\ccA') = \ccY(\ccA) = \Yvcentermath1 \stiny\yng(5,4,3)\).
Note that the number of cells in \(\ccY(\ccA)\) is equal to \(\lg_{\Welter[\ccm]}(\ccA)\) since
\[
 \lg_{\Welter[\ccm]}(\ccA) = \sum_{\cci} \cca^i - (1 + \cdots + \ccm - 1) =  \size{\ccY(\ccA)}.
\]
In the above example,
\[
 \lg_{\Welter[3]}(\ccA) = 3 + 7 + 5 - (1 + 2) = (7 - 2) + (5 - 1) + 3 = 12 = \size{\ccY(\ccA)}.
\]

It is known that moving in Welter's game corresponds to removing a hook.
Here, for a cell \((\cci,\ccj)\) in a Young diagram \(\ccY\), the \emph{\((\cci, \ccj)\)-hook} \(\ccH_{ij}(\ccY)\) is defined by
\[
 \ccH_{\cci, \ccj}(\ccY) = \set{(\cci', \ccj') \in \ccY : (\cci' \ge \cci \tand \ccj' = \ccj) \tor (\cci' = \cci \tand \ccj' \ge \ccj)}.
\]
In other words, \(\ccH_{\cci, \ccj}(\ccY)\) consists of the cells to the right of \((\cci, \ccj)\), the cells below \((\cci, \ccj)\), and \((\cci, \ccj)\) itself.
For example, Figure  \ref{fig:org3a7d2ee} shows the \((1,2)\)-hook of \((5, 4, 3)\).

\begin{figure}[htbp]
\centering
\includegraphics[scale=0.6]{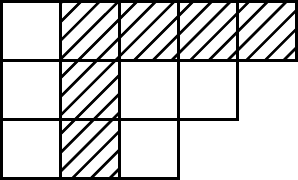}
\caption{\label{fig:org3a7d2ee}The (1,2)-hook of (5,4,3).}
\end{figure}

We now describe removing a hook. We first remove cells in \(\ccH_{ij}(\ccY)\) from \(\ccY\).
If we get two diagrams, then we next push them together. The obtained Young diagram is denoted by
\(\ccY \setminus \ccH_{ij}(\ccY)\) and is said to be \emph{obtained from \(\ccY\) by removing the \((\cci, \ccj)\)-hook}.
For example, if \(\ccY = (5, 4,3)\), 
then \(\ccY \setminus \ccH_{1,2}(\ccY) = (3, 2, 1)\) and \(\ccY \setminus \ccH_{3,2}(\ccY) = (5, 4, 1)\)
(see Figure \ref{fig:org951dc8f}).

\begin{figure}[htbp]
\centering
\includegraphics[scale=0.6]{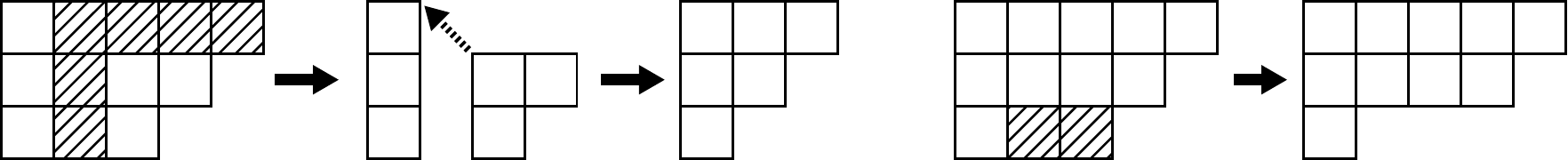}
\caption{\label{fig:org951dc8f}Removing the \((1,2)\)-hook and the \((3, 2)\)-hook.}
\end{figure}

Moving in Welter's game corresponds to removing a hook as follows.
Let \(\ccA\) be a position in \(\Welter[\ccm]\) and \(\ccB\) its option.
Then we can write
\[
 \ccB = (\cca^1, \ldots, \cca^{\ccs - 1}, \cca^s - \cch, \cca^{\ccs + 1}, \ldots, \cca^\ccm).
\]
Let \(\cca^\ccs\) be the \(i\)th largest element in the components of \(\ccA\).
Note that \(\cca^\ccs - \cch \in \NN \setminus \set{\cca^1, \ldots, \cca^\ccm}\).
Suppose that \(\cca^\ccs - \cch\) is the \(j\)th smallest element in \(\NN \setminus \set{\cca^1, \ldots, \cca^\ccm}\).
Then the following holds (see, for example, \cite{olsson-Combinatorics-1993}):
\begin{equation}
\label{orgfa24ddf}
 \ccY(\ccB) = \ccY(\ccA) \setminus \ccH_{\cci, \ccj}(\ccY(\ccA)).
\end{equation}
For example, consider moving from \((7, 5, 3)\) to \((1, 5, 3)\) in \(\Welter[3]\).
Since 7 is the largest element in \(\set{7, 5, 3}\) and 1 is the second smallest element in \(\NN \setminus \set{7, 5, 3}\),
it follows from (\ref{orgfa24ddf}) that
\[
 \ccY(1, 5, 3) = \ccY(7, 5,3) \setminus \ccH_{1,2}(\ccY(7,5,3)).
\]
Indeed, \(\ccY(1, 5, 3)\) is the Young diagram \((3, 2, 1)\) and
\[
 \ccY(7, 5, 3) \setminus \ccH_{1,2}(\ccY(7,5,3)) = (5, 4, 3) \setminus \ccH_{1,2}(5,4,3) = (3, 2, 1)
\]
as we have seen in Figure \ref{fig:org951dc8f}.
In this way, moving in Welter's game corresponds to removing a hook.
Note that it is obvious that \(\size{\ccY(\ccA)} = \lg_{\Welter[\ccm]}(\ccA)\).

The number of cells in the \((\cci, \ccj)\)-hook is called the \emph{hook-length} of \((\cci, \ccj)\).
Figure \ref{fig:orge7d6ff3} shows the hook-lengths of the Young diagram \((5,4,3)\).
Let \(\sH(\ccY)\) be the multiset of hook lengths of \(\ccY\).
For example, \(\sH(5,4,3) = \set{1,1,1,2,3,3,3,4,5,5,6,7}\).

\begin{figure}[htbp]
\centering
\includegraphics[scale=0.6]{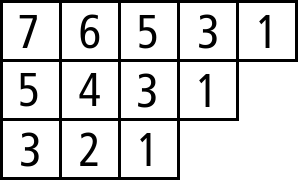}
\caption{\label{fig:orge7d6ff3}The hook lengths of (5, 4, 3).}
\end{figure}

 \begin{theorem}[\hspace{0.01em}\cite{sato-game-1968, sato-mathematical-1970, sato-maya-1970}]
 \comment{Thm. [\hspace{0.01em}\cite{sato-game-1968, sato-mathematical-1970, sato-maya-1970}]}
\label{sec:org747c946}
\label{orgd359077}
If \(\ccA\) is a position in Welter's game \(\Welter[\ccm]\),
then
\[
 \sg_{\Welter[\ccm]}(\ccA) = \bigopluspm[\cch \in \sH(\ccY)][][2][-1.1em][0.3em] \pnorm<2>(\cch),
\]
where \(\ccY = \ccY(\ccA)\) and \(\pnorm<2>(\cch) = \sum_{\ccL = 0}^{\ord_2(\cch)} 2^\ccL = \pexp<2>{\underbrace{1, \ldots, 1}_{\ord_2(\cch) + 1}}\).
 
\end{theorem}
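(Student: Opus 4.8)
The plan is to deduce this from Welter's formula (Theorem~\ref{orga5b1b7f}) by a digit‑by‑digit comparison in base~$2$, the bridge being the classical abacus description of the hook lengths of a Young diagram. First I would record that $\set{\cca^1, \ldots, \cca^\ccm}$ is a beta‑set of size $\ccm$ for $\ccY = \ccY(\ccA)$: by definition of $\ccY(\ccA)$, if $\cca^{\sigma(1)} > \cdots > \cca^{\sigma(\ccm)}$ then the $k$‑th part of $\ccY$ is $\cca^{\sigma(k)} - \ccm + k$, so $\ccY_k = \cca^{\sigma(k)} - (\ccm - k)$. By the standard abacus fact (see, e.g., \cite{olsson-Combinatorics-1993}), the multiset of hook lengths of $\ccY$ is then
\[
 \sH(\ccY) = \set{\cca^\cci - y : 1 \le \cci \le \ccm,\ y \in \NN \setminus \set{\cca^1, \ldots, \cca^\ccm},\ y < \cca^\cci},
\]
where the pair $(\cci, y)$ is matched to the (unique) cell whose hook has length $\cca^\cci - y$.

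Next I would compare the two claimed expressions digit by digit. Since $\pnorm<2>(\cch) = 2^{\ord_2(\cch)+1} - 1$, the $\ccL$‑th binary digit of $\bigoplus_{\cch \in \sH(\ccY)} \pnorm<2>(\cch)$ is the parity of $\#\set{\cch \in \sH(\ccY) : 2^\ccL \mid \cch}$. On the other side, using that the $\ccL$‑th digit of $2^{k+1} - 1$ is $1$ iff $\ccL \le k$, the $\ccL$‑th digit of the right‑hand side of Theorem~\ref{orga5b1b7f} is the parity of
\[
 \sum_\cci \left\lfloor \frac{\cca^\cci}{2^\ccL} \right\rfloor + \#\set{(\cci,\ccj) : \cci < \ccj,\ 2^\ccL \mid \cca^\cci - \cca^\ccj},
\]
the first summand coming from $\cca^1 \oplus_2 \cdots \oplus_2 \cca^\ccm$ and the second from the correction term. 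So it suffices to prove, for every $\ccL \in \NN$,
\[
 \#\set{\cch \in \sH(\ccY) : 2^\ccL \mid \cch} \equiv \sum_\cci \left\lfloor \frac{\cca^\cci}{2^\ccL} \right\rfloor + \#\set{(\cci,\ccj) : \cci < \ccj,\ 2^\ccL \mid \cca^\cci - \cca^\ccj} \pmod{2}.
\]

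To establish this last congruence I would count, via the abacus bijection, the hook lengths divisible by $2^\ccL$, grouping the admissible pairs $(\cci, y)$ by the common residue $r$ of $\cca^\cci$ and $y$ modulo $2^\ccL$. Fixing $r$, write $\cca^\cci = q_\cci 2^\ccL + r$ for the indices $\cci$ with $\cca^\cci \equiv r$, and let $c_r$ be the number of such indices. For each such $\cci$, the $y$ congruent to $r$, smaller than $\cca^\cci$, and outside $\set{\cca^1,\ldots,\cca^\ccm}$ number exactly $q_\cci$ minus the number of $\cca^\ccj \equiv r$ lying below $\cca^\cci$; summing over the $c_r$ indices in increasing order telescopes to $\bigl(\sum q_\cci\bigr) - \binom{c_r}{2}$. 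Summing over all $r$ gives $\#\set{\cch \in \sH(\ccY) : 2^\ccL \mid \cch} = \sum_\cci \lfloor \cca^\cci / 2^\ccL \rfloor - \sum_r \binom{c_r}{2}$, and since $\sum_r \binom{c_r}{2}$ counts precisely the pairs $\cci < \ccj$ with $\cca^\cci \equiv \cca^\ccj \pmod{2^\ccL}$, reducing modulo $2$ yields the required congruence and hence the theorem. The telescoping count is routine; the step I expect to be the main obstacle is setting up the abacus description with the correct bookkeeping of the padding by zeros, so that the same index set $\set{1,\ldots,\ccm}$ simultaneously indexes the beta‑set and the variables in Welter's formula, and verifying that the two sources of digits in Welter's formula line up respectively with the ``column‑count'' term $\sum_\cci \lfloor \cca^\cci/2^\ccL \rfloor$ and the ``pair‑count'' term $\sum_r \binom{c_r}{2}$ produced by the bijection.
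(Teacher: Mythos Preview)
Your argument is correct. The abacus description of hook lengths you quote is standard, and the digit-by-digit comparison goes through exactly as you outline: for each $L$ the parity of $\#\{h\in\sH(Y):2^L\mid h\}$ equals $\sum_i\lfloor a^i/2^L\rfloor-\sum_r\binom{c_r}{2}$, which modulo $2$ matches the $L$-th binary digit of Welter's expression. The only point I would make explicit in the write-up is the identity $(a^i)_L\equiv\lfloor a^i/2^L\rfloor\pmod 2$, so that the reader sees immediately why the $\oplus_2$ of the $a^i$ contributes the parity of $\sum_i\lfloor a^i/2^L\rfloor$ rather than of $\sum_i(a^i)_L$; it is of course the same thing, but the former phrasing is what your counting produces.

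As for comparison with the paper: the paper does not prove this theorem. It is stated with attribution to Sato and used as background, alongside Welter's formula (Theorem~\ref{orga5b1b7f}) and the $p$-analogue (Theorem~\ref{orgf794c12}), to motivate the hook-length perspective; no derivation is given here. Your reduction of Sato's formula to Welter's via the beta-set bijection is a clean self-contained proof and fits naturally with the paper's later use of hook multisets, so it would be a welcome addition rather than a duplication.
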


 \begin{example}
 \comment{Exm.}
\label{sec:orgf800d1a}
\label{orgb52c2ba}
If \(\ccA = (7, 5, 3)\), then
\begin{align*}
 \sg_{\Welter[3]}(\ccA) &= \bigopluspm[\cch \in \sH(\ccY(\ccA))][][2][-1.6em][0.5em] \pnorm<2>(\cch) \\
 &= \pnorm<2>(1) \oplus_2 \pnorm<2>(1) \oplus_2 \pnorm<2>(1) \oplus_2 \pnorm<2>(3) \oplus_2 \pnorm<2>(3) \oplus_2 \pnorm<2>(3) \\
 &\ \ \ \ \oplus_2\pnorm<2>(5) \oplus_2 \pnorm<2>(5) \oplus_2 \pnorm<2>(7) \\
 &\ \ \ \  \oplus_2 \pnorm<2>(2) \oplus_2 \pnorm<2>(6) \oplus_2 \pnorm<2>(4) \\
 &= 1 \oplus_2 1 \oplus_2 1 \oplus_2 1 \oplus_2 1 \oplus_2 1 \oplus_2 1 \oplus_21 \oplus_2 1 \oplus_2 3 \oplus_2 3 \oplus_2 7 \\
 &= 6.
\end{align*}
 
\end{example}

\comment{connect}
\label{sec:org7c750ff}
From Theorem \ref{org28c6b37}, we obtain the following analogue of Theorem \ref{orgd359077}.\footnote{Theorem \ref{orgf794c12} holds for an integer \(\ccp\) greater than 1.}

 \begin{theorem}[\hspace{0.01em}\cite{irie-pSaturations-2018a}]
 \comment{Thm. [\hspace{0.01em}\cite{irie-pSaturations-2018a}]}
\label{sec:org9ce53b0}
\label{orgf794c12}
Let \(\Gamma\) be a \(p\)-saturation of \(\Welter[\ccm]\). 
If \(\ccA\) is a position in \(\Gamma\), then
\begin{equation}
\label{orga301ffb}
 \sg_{\Gamma}(\ccA) = \bigopluspm[\cch \in \sH(\ccY)][][\ccp][-1.1em][0.3em] \pnorm<\ccp>(\cch),
\end{equation}
where \(\ccY = \ccY(\ccA)\) and \(\pnorm<\ccp>(\cch) = \sum_{\ccL = 0}^{\ord_\ccp(\cch)} \ccp^\ccL  = \pexp<\ccp>{\underbrace{1, \ldots, 1}_{\ord_\ccp(\cch) + 1}}\).
 
\end{theorem}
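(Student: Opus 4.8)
The plan is to obtain Theorem~\ref{orgf794c12} from Theorem~\ref{org28c6b37} by rewriting the right-hand side of (\ref{orgd46f61a}) via a classical description of the multiset of hook lengths of a Young diagram. Fix a position \(\ccA\) in \(\Welter[\ccm]\), choose \(\sigma\) with \(\cca^{\sigma(1)} > \cdots > \cca^{\sigma(\ccm)}\), and set \(x_i = \cca^{\sigma(i)}\) and \(\ccY = \ccY(\ccA)\). Then \(\ccY\) has parts \(\lambda_i = x_i - \ccm + i\), and \(\set{x_1 > \cdots > x_\ccm}\) is exactly the \(\beta\)-set of \(\ccY\) with \(\ccm\) beads, because the \(i\)-th \(\beta\)-number of \(\ccY\) is \(\lambda_i + (\ccm - i) = x_i\). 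The combinatorial heart of the proof is the identity of multisets
\[
 \sH(\ccY) \uplus \set{x_i - x_j : 1 \le i < j \le \ccm} = \biguplus_{i = 1}^{\ccm} \set{1, 2, \ldots, x_i},
\]
equivalently: the hook lengths occurring in the \(i\)-th row of \(\ccY\) are precisely the elements of \(\set{1, \ldots, x_i}\) that are not of the form \(x_i - x_j\) with \(i < j \le \ccm\). This is standard (see, e.g., \cite{olsson-Combinatorics-1993}), and it also behaves correctly on empty rows, where \(\lambda_i = 0\) forces \(x_j = \ccm - j\) for all \(j \ge i\), so that both sides of the row statement are empty.

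Next I would reduce to a digitwise count in base \(\ccp\). The only fact used is that \(\pnorm<\ccp>(h)\) has \(L\)-th base-\(\ccp\) digit equal to \(1\) when \(\ccp^L \mid h\) and \(0\) otherwise; hence, for a multiset \(S\) of positive integers, the \(L\)-th digit of the \(\ccp\)-Nim-sum of the numbers \(\pnorm<\ccp>(h)\), \(h \in S\), equals \(\#\set{h \in S : \ccp^L \mid h} \bmod \ccp\). Two consequences follow: first, \(\pnorm<\ccp>(1) \oplus_\ccp \cdots \oplus_\ccp \pnorm<\ccp>(n) = n\) for every \(n\), since its \(L\)-th digit is \(\lfloor n/\ccp^L \rfloor \bmod \ccp\), the \(L\)-th digit of \(n\); second, forming \(\ominus_\ccp \pnorm<\ccp>(h)\) affects each digit in the same way as forming \(\oplus_\ccp(\ccp^{\ord_\ccp(h) + 1} - 1)\), because subtracting \(1\) modulo \(\ccp\) is the same as adding \(\ccp - 1\) modulo \(\ccp\), and the two numbers have the same set of nonzero digit positions. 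Substituting the multiset identity and applying the first consequence gives
\[
 \bigoplus_{h \in \sH(\ccY)} \pnorm<\ccp>(h) = \bigl(x_1 \oplus_\ccp \cdots \oplus_\ccp x_\ccm\bigr) \ominus_\ccp \biggl(\, \bigoplus_{1 \le i < j \le \ccm} \pnorm<\ccp>(x_i - x_j) \biggr),
\]
where the displayed large sums denote \(\ccp\)-Nim-sums; applying the second consequence and using \(\set{x_i} = \set{\cca^i}\), so that the unordered pairs \(\set{x_i, x_j}\) are the pairs \(\set{\cca^i, \cca^j}\), the right-hand side is exactly \(\ppsi<\ccp>(\ccA)\) of (\ref{orgd46f61a}). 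By Theorem~\ref{org28c6b37}, \(\ppsi<\ccp>(\ccA) = \sg_{\Gamma}(\ccA)\), which is (\ref{orga301ffb}); specializing to \(\ccp = 2\) recovers Theorem~\ref{orgd359077}.

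I expect that all the genuine content sits in the row-wise hook-length identity. If one cites it, what remains is a short base-\(\ccp\) digit count and the theorem is merely a repackaging of Theorem~\ref{org28c6b37}. For a self-contained treatment the work is to set up, inside each nonempty row \(i\) of \(\ccY\), the standard bijection between the cells of that row and the elements of \(\set{1, \ldots, x_i}\) that are not of the form \(x_i - x_j\) (the ``\(\beta\)-gaps'' lying below the \(i\)-th part), under which the hook length of a cell is read off directly; this is routine but is the single point where a figure, or an induction on \(\size{\ccY}\), is most helpful. It is also worth noting that this identity shows \(\ppsi<\ccp>(\ccA)\), hence \(\sg_{\Gamma}(\ccA)\), depends only on the Young diagram \(\ccY(\ccA)\), i.e.\ is invariant under \(\beta\)-set shifts of \(\ccA\).
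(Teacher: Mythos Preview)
Your derivation is correct and matches the paper's intent: the paper does not supply a proof but simply records that the hook-length formula follows from Theorem~\ref{org28c6b37}, and your argument via the classical multiset identity \(\sH(\ccY)\uplus\{x_i-x_j:i<j\}=\biguplus_i\{1,\ldots,x_i\}\) together with the digitwise identities \(\bigoplus_{h=1}^{n}\pnorm<\ccp>(h)=n\) and \(\ominus_\ccp\,\pnorm<\ccp>(h)=\oplus_\ccp\,(\ccp^{\ord_\ccp(h)+1}-1)\) is exactly the natural way to carry out that reduction. There is nothing to add.
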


 \begin{example}
 \comment{Exm.}
\label{sec:orgefc9b0b}
\label{org31ec141}
Let \(\Gamma\) be a \(5\)-saturation of \(\Welter[3]\) and
\(\ccA\) be the position (7, 5, 3) in \(\Gamma\).
By Theorem \ref{org28c6b37}, we see that
\begin{align*}
 \sg_{\Gamma}(\ccA) &= \bigopluspm[\cch \in \sH(\ccY(\ccA))][][5][-1.6em][0.6em] \pnorm<5>(\cch) \\
 &= \pnorm<5>(1) \oplus_5 \pnorm<5>(1) \oplus_5 \pnorm<5>(1) \oplus_5 \pnorm<5>(2) \oplus_5 \pnorm<5>(3)  \\
 &\ \ \ \ \oplus_5 \pnorm<5>(3) \oplus_5 \pnorm<5>(3) \oplus_5 \pnorm<5>(4) \oplus_5 \pnorm<5>(6) \oplus_5 \pnorm<5>(7)  \\
 &\ \ \ \ \oplus_5 \pnorm<5>(5) \oplus_5 \pnorm<5>(5)  \\
 &= \pnorm<5>(5) \oplus_5 \pnorm<5>(5)  \\
 &= 6 \oplus_5 6 \\
 &= 12.
\end{align*}
 
\end{example}

\subsection{The \(p'\)-component theorem}
\label{sec:org638d4da}
\label{org85bc411}
We describe the \(p'\)-component theorem in terms of Young tableaux. 

Let \(\ccY\) be a Young diagram. A \emph{Young tableau} \(\ccT\) of shape \(\ccY\) is
a bijection from \(\ccY\) to \(\set{1, 2, \ldots, \size{\ccY}}\).
A Young tableau \(\ccT\) of shape \(\ccY\) is called a \emph{standard tableau} if \(\ccT(\cci, \ccj) \le \ccT(\cci', \ccj')\) for 
\((\cci, \ccj), (\cci', \ccj') \in \ccY\) with \(\cci \le \cci'\) and \(\ccj \le \ccj'\).
We can visualize a Young tableau by writing numbers in cells.
Figure \ref{fig:org17993c7} shows an example of a standard tableau.

\begin{figure}[htbp]
\centering
\includegraphics[scale=0.6]{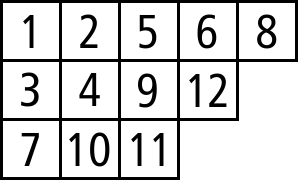}
\caption{\label{fig:org17993c7}A standard tableau of shape \((5, 4, 3)\).}
\end{figure}

Let \(\ccf^\ccY\) be the number of standard tableaux of shape \(\ccY\).
We can calculate \(\ccf^\ccY\) by using the hook lengths of \(\ccY\).

 \begin{theorem}[Hook Formula \cite{frame-hook-1954}]
 \comment{Thm. [Hook Formula \cite{frame-hook-1954}]}
\label{sec:org3c33193}
\label{org94584db}
If \(\ccY\) is a Young diagram with \(\ccn\) cells, then
\[
 f^\ccY = \frac{n!}{\prod_{\cch \in \sH(\ccY)} \cch}.
\]
 
\end{theorem}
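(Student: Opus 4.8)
The plan is to derive the hook formula by induction on $\ccn = \size{\ccY}$, comparing the recursion obeyed by the number of standard tableaux with the one obeyed by the claimed value. Write $F(\ccY) = \ccn! / \prod_{\cch \in \sH(\ccY)} \cch$; we must show $f^{\ccY} = F(\ccY)$. The base case $\ccn = 0$ is immediate, since $f^{()} = 1 = F(())$. For the inductive step I would invoke the \emph{branching rule}, which here is purely combinatorial: in a standard tableau of shape $\ccY$ the cell carrying the entry $\ccn$ is necessarily a removable corner $c$, and deleting it leaves a standard tableau of shape $\ccY \setminus \set{c}$; conversely, any standard tableau of a shape $\ccY \setminus \set{c}$ with $c$ a removable corner of $\ccY$ extends back uniquely by writing $\ccn$ in $c$. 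Hence $f^{\ccY} = \sum_{c} f^{\ccY \setminus \set{c}}$, the sum ranging over removable corners $c$ of $\ccY$, so by the inductive hypothesis it suffices to establish the identity $\sum_{c} F(\ccY \setminus \set{c}) = F(\ccY)$, i.e.\ $\sum_{c} F(\ccY \setminus \set{c}) / F(\ccY) = 1$.

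Next I would make the ratio explicit. Fix a removable corner $c = (r, s)$, and for a cell $(\cci, \ccj) \in \ccY$ abbreviate $\cch_{\cci, \ccj} = \size{\ccH_{\cci, \ccj}(\ccY)}$. Removing $c$ from $\ccY$ changes hook lengths only as follows: $\cch_{\cci, s}$ decreases by $1$ for each cell strictly above $c$ in column $s$, $\cch_{r, \ccj}$ decreases by $1$ for each cell strictly left of $c$ in row $r$, and the hook length $1$ of $c$ itself disappears; all other hook lengths are unchanged. A short computation gives
\[
 \frac{F(\ccY \setminus \set{c})}{F(\ccY)} = \frac{1}{\ccn} \prod_{\cci = 1}^{r - 1} \frac{\cch_{\cci, s}}{\cch_{\cci, s} - 1} \; \prod_{\ccj = 1}^{s - 1} \frac{\cch_{r, \ccj}}{\cch_{r, \ccj} - 1},
\]
with all denominators positive since $\cch_{\cci, s} \ge r - \cci + 1 \ge 2$ and $\cch_{r, \ccj} \ge s - \ccj + 1 \ge 2$. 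Thus the identity to be proved becomes
\[
 \sum_{c = (r, s)} \frac{1}{\ccn} \prod_{\cci < r} \Bigl( 1 + \frac{1}{\cch_{\cci, s} - 1} \Bigr) \prod_{\ccj < s} \Bigl( 1 + \frac{1}{\cch_{r, \ccj} - 1} \Bigr) = 1 .
\]

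The crux is this last identity. My preferred route is the probabilistic argument of Greene, Nijenhuis and Wilf: run the \emph{hook walk} on $\ccY$, namely pick a cell uniformly at random and then repeatedly jump from the current cell to a uniformly random \emph{other} cell of its hook; the hook length strictly decreases at each step, so the walk terminates at a removable corner. Conditioning on the successive jumps, one shows by induction on the walk's length that the probability it is absorbed at the corner $c = (r, s)$ equals exactly $\frac{1}{\ccn} \prod_{\cci < r} \bigl( 1 + \frac{1}{\cch_{\cci, s} - 1} \bigr) \prod_{\ccj < s} \bigl( 1 + \frac{1}{\cch_{r, \ccj} - 1} \bigr)$, and summing over all corners gives $1$, as required. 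Establishing this absorption probability is the main obstacle and carries essentially the whole weight of the argument, because it demands careful bookkeeping of which hook lengths enter the conditional probabilities at each stage. A probability-free alternative expands each factor $\prod ( 1 + \frac{1}{\cch - 1} )$ over subsets of the affected arm and leg cells and evaluates the resulting alternating sum by telescoping / partial fractions; and a third, fully self-contained option bypasses the recursion entirely, realising standard tableaux as non-intersecting lattice paths (Lindström--Gessel--Viennot) to get the determinant $f^{\ccY} = \ccn! \det \bigl( 1 / (\lambda^\cci - \cci + \ccj)! \bigr)$, collapsing it to a Vandermonde determinant in the first-column hook lengths $\beta^\cci = \cch_{\cci, 1}$, and then identifying $\prod_{\cch \in \sH(\ccY)} \cch$ with $\prod_\cci \beta^\cci! \,/\, \prod_{\cci < \ccj} ( \beta^\cci - \beta^\ccj )$.
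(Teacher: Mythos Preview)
The paper does not supply a proof of this theorem at all: it is quoted as the classical Hook Formula of Frame--Robinson--Thrall, with only the citation \cite{frame-hook-1954} given, and the text moves straight to an example. There is therefore nothing in the paper to compare your argument against.

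For what it is worth, your outline is a correct sketch of the Greene--Nijenhuis--Wilf probabilistic proof. The branching recursion $f^{\ccY} = \sum_{c} f^{\ccY \setminus \set{c}}$ over removable corners is exactly right, your computation of the ratio $F(\ccY \setminus \set{c})/F(\ccY)$ is correct (the only hook lengths that change are those in row $r$ and column $s$, each dropping by one, and the corner's own hook length $1$ disappears), and the hook-walk absorption probability does indeed equal the displayed product, so the probabilities sum to $1$. You are candid that the verification of the absorption probability is the load-bearing step you have not written out; that step is standard and appears, for instance, in Sagan's book \cite{sagan-Symmetric-2001}, which the paper already cites. The two alternatives you mention---the subset/partial-fraction expansion and the determinantal route through non-intersecting lattice paths and the Vandermonde identity---are also valid and well known. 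Any of these would be acceptable as a self-contained proof, but none is needed for the paper's purposes, which only \emph{use} the formula.
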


 \begin{example}
 \comment{Exm.}
\label{sec:org9382461}
\label{org88a155f}
Let \(\ccY = (2, 1)\). By Theorem \ref{org94584db},
\[
 f^\ccY = \frac{3!}{1^2 \cdot 3} = 2.
\]
Indeed, there are exactly two standard tableaux of shape \(\ccY\) (see Figure \ref{fig:orgb68f321}).
\begin{figure}[htbp]
\centering
\includegraphics[scale=0.6]{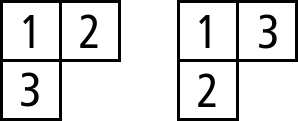}
\caption{\label{fig:orgb68f321}Two standard tableaux of shape \((2, 1)\).}
\end{figure}
 
\end{example}

\begin{remark}
 \comment{Rem.}
\label{sec:org5ed25a5}
\label{org694c08b}
The above formula is important in combinatorial representation theory.
Let \(\Sym(\ccn)\) denote the symmetric group on \(\set{1,2,\ldots,\ccn}\).
A \emph{representation} \(\cRep\) of \(\Sym(\ccn)\) is a group homomorphism from \(\Sym(\ccn)\)
to the general linear group \(GL(d, \CC)\), the group of invertible \(d \times d\) matrices with entries in \(\CC\).
The number \(\ccd\) is called the \emph{degree} of \(\rho\).
It is known that the Young diagrams with \(n\) cells are in one-to-one correspondence with the irreducible\footnote{Roughly speaking, an irreducible representation is kind of an atom among representations because every representation can be decomposed into a direct sum of irreducible representations.} representations of \(\Sym(n)\).
For a Young diagram \(Y\) with \(n\) cells, let \(\cRep^Y\) denote the irreducible representation of \(\Sym(n)\) corresponding to \(Y\).
Then the degree of \(\cRep^\ccY\) is equal to \(\ccf^\ccY\).
In addition to degrees, a lot of results about representations of symmetric groups can be approached in a purely combinatorial way.
See, for example, \cite{sagan-Symmetric-2001} for details.
 
\end{remark}

\comment{connect}
\label{sec:org4ba8466}
Using \(\ccf^\ccY\), we restate Proposition \ref{org8a24e5c}.
Macdonald \cite{macdonald-Degrees-1971} characterized \(\ccY\) such that \(\ccf^\ccY\) is prime to \(\ccp\).\footnote{Macdonald's characterization holds only when \(\ccp\) is prime, so we have to assume that \(\ccp\) is prime.}
From this characterization, we can show that
\(\ccf^\ccY\) is prime to \(\ccp\) if and only if \(\ppsi<\ccp>(\ccY) = \size{\ccY}\),
where \(\ppsi<\ccp>(\ccY)\) is the right-hand side of (\ref{orga301ffb}).
Recall that 
\[
 \ppsi<\ccp>(\ccY(\ccA)) = \sg_{\tilde{\Gamma}}(\ccA) \quad \tand \quad \size{\ccY(\ccA)} = \lg_{\tilde{\Gamma}}(\ccA)
\]
for a position \(\ccA\) in a \(p\)-saturation \(\tilde{\Gamma}\) of \(\Welter[\ccm]\).
Hence \(\ccA\) is full if and only if \(\ccf^{\ccY(\ccA)}\) is prime to \(\ccp\).
Therefore the following result follows from Proposition \ref{org8a24e5c}.

 \begin{theorem}[\hspace{0.01em}\cite{irie-pSaturations-2018a}]
 \comment{Thm. [\hspace{0.01em}\cite{irie-pSaturations-2018a}]}
\label{sec:orgd063ab3}
\label{org2374258}
Every Young diagram \(\ccY\) includes a Young diagram \(\ccZ\) with \(\ppsi<\ccp>(\ccY)\) cells such that \(\ccf^\ccZ\) is prime to \(\ccp\).
 
\end{theorem}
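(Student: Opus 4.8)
The plan is to read this theorem off Proposition \ref{org8a24e5c} using the correspondence between Welter's game and Young diagrams from Section \ref{org1dc745b}, together with Macdonald's characterization (recalled just above) of the Young diagrams \(\ccY\) for which \(\ccf^\ccY\) is prime to \(\ccp\); this is the only place where primality of \(\ccp\) enters.

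Concretely, I would proceed as follows. First, realize the given Young diagram as a game position: if \(\ccY = (\lambda^1, \ldots, \lambda^r)\), pick \(\ccm \ge r\) and set \(\ccA = (\lambda^1 + \ccm - 1, \lambda^2 + \ccm - 2, \ldots, \lambda^\ccm)\) with \(\lambda^\cci := 0\) for \(\cci > r\); then \(\ccA\) has strictly decreasing, hence distinct, components, so \(\ccA\) is a position in \(\Welter[\ccm]\) and \(\ccY(\ccA) = \ccY\). Second, apply Proposition \ref{org8a24e5c} to \(\ccA\) to obtain a descendant \(\ccB\) of \(\ccA\) in \(\Welter[\ccm]\) with
\[
 \lg_{\Welter[\ccm]}(\ccB) = \ppsi<\ccp>(\ccB) = \ppsi<\ccp>(\ccA).
\]
Third, put \(\ccZ = \ccY(\ccB)\). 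Each move in \(\Welter[\ccm]\) removes a hook from the associated Young diagram (equation \ref{orgfa24ddf}), and removing a hook produces a sub-diagram (clear from the construction; cf.\ Figure \ref{fig:org951dc8f}); iterating over the moves from \(\ccA\) to \(\ccB\) gives \(\ccZ \subseteq \ccY\). Fourth, translate the displayed equalities through \(\ccY(\cdot)\): since \(\size{\ccY(\ccB)} = \lg_{\Welter[\ccm]}(\ccB)\) and \(\ppsi<\ccp>\) depends only on the associated Young diagram, we obtain
\[
 \size{\ccZ} = \ppsi<\ccp>(\ccZ) = \ppsi<\ccp>(\ccY),
\]
so \(\ccZ\) has \(\ppsi<\ccp>(\ccY)\) cells. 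Fifth, by Macdonald's characterization, \(\ppsi<\ccp>(\ccZ) = \size{\ccZ}\) holds if and only if \(\ccf^\ccZ\) is prime to \(\ccp\); since the left-hand equality holds, \(\ccf^\ccZ\) is prime to \(\ccp\), and \(\ccZ\) is the required sub-diagram of \(\ccY\).

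I expect almost no difficulty to remain at this stage: the substance is entirely packaged inside Proposition \ref{org8a24e5c} --- equivalently, in the assertion that every position in Welter's game has a full descendant with the same Sprague-Grundy value --- which we may assume, and the remainder is bookkeeping with the dictionary. The two points meriting a sentence of care are (a) the containment \(\ccZ \subseteq \ccY\), i.e.\ that \(\ccY \mapsto \ccY \setminus \ccH_{\cci, \ccj}(\ccY)\) always yields a sub-Young-diagram, which is immediate from the bead description of a Welter move as sliding a bead down to a lower empty slot; and (b) the fact that Macdonald's characterization, hence the whole translation, requires \(\ccp\) to be prime, which is the standing hypothesis of Section \ref{org3ea736a}.
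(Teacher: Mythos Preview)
Your proposal is correct and follows essentially the same route as the paper: realize \(\ccY\) as a Welter position \(\ccA\), invoke Proposition~\ref{org8a24e5c} to get a descendant \(\ccB\) with \(\lg(\ccB)=\ppsi<\ccp>(\ccB)=\ppsi<\ccp>(\ccA)\), translate back via \(\ccZ=\ccY(\ccB)\), and conclude using Macdonald's characterization that \(\ccf^\ccZ\) is prime to \(\ccp\). The only cosmetic difference is that the paper phrases things inside a \(p\)-saturation \(\tilde{\Gamma}\) of \(\Welter[\ccm]\) (so that \(\ppsi<\ccp>=\sg_{\tilde{\Gamma}}\) and ``full'' has its literal meaning there), whereas you work directly with \(\Welter[\ccm]\) and \(\ppsi<\ccp>\); since Proposition~\ref{org8a24e5c} is stated in the latter language anyway, nothing is lost.
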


\begin{proof}
 \comment{Proof.}
\label{sec:orge8a3944}
Let \(\ccY = (\lambda^1, \lambda^2, \ldots, \lambda^\ccm)\) and \(\ccA\) be the position \((\lambda^1 + \ccm - 1, \lambda^2 + \ccm - 2, \ldots, \lambda^\ccm)\)
in a \(p\)-saturation \(\tilde{\Gamma}\) of \(\Welter[\ccm]\).
Then \(\ccY(\ccA) = \ccY\). By Proposition \ref{org8a24e5c},
the position \(\ccA\) has a full descendant \(\ccB\) with the same Sprague-Grundy value as \(\ccA\), that is, \(\lg_{\tilde{\Gamma}}(\ccB) = \ppsi<\ccp>(\ccB) = \ppsi<\ccp>(\ccA)\).
Since moving in Welter's game corresponds to removing a hook,
we see that \(\ccY(\ccB) \subseteq \ccY(\ccA)\). Moreover,
\[
 \size{\ccY(\ccB)} = \lg_{\tilde{\Gamma}}(\ccB) = \ppsi<\ccp>(\ccB) = \ppsi<\ccp>(\ccA) = \ppsi<\ccp>(\ccY(\ccA)).
\]
Therefore \(\ccY(\ccB)\) is the desired Young diagram.
\end{proof}

 \begin{example}
 \comment{Exm.}
\label{sec:orgc174c7d}
\label{org62864b3}
Let \(\ccp = 2\), and let \(\ccY\) be the Young diagram \(\Yvcentermath1 \stiny\yng(4,3,2)\).
By Theorem \ref{org94584db},
\[
 \ccf^\ccY = \frac{9!}{1^3 \cdot 2 \cdot 3^2 \cdot 4 \cdot 5 \cdot 6} = 168.
\]
Note that \(\ccf^\ccY\) is even.
Now \(\ccY\) corresponds to the position \((6, 4, 2)\) in \(\Welter[3]\).
Since \(\ppsi<2>(\ccY) = \sg_{\Welter[3]}(6, 4, 2) = 7\),
Theorem \ref{org2374258} says that
\(\ccY\) includes a Young diagram \(\ccZ\) with 7 cells such that \(\ccf^\ccZ\) is odd.
Indeed, as we have seen in Example \ref{org9f6734a}, 
the position \((5, 3, 2)\) is a full descendant of
\((6, 4, 2)\) and these two positions have the same Sprague-Grundy value.
Let \(\ccZ = \Yvcentermath1\ccY(5, 3, 2) = \stiny\yng(3,2,2)\). Then \(\ccZ \subset \ccY\) and
\[
 \size{\ccZ} = 3 + 2 + 2 = 7 = \ppsi<2>(\ccY).
\]
Moreover,
\[
 \ccf^\ccZ = \frac{7!}{1^2 \cdot 2^2 \cdot 3 \cdot 4 \cdot 5} = 21.
\]
In particular, \(\ccf^\ccZ\) is odd.
 
\end{example}

\begin{remark}
 \comment{Rem.}
\label{sec:org8ead47b}
\label{org5df1f74}
Theorem \ref{org2374258} has the following algebraic interpretation.
Let \(\ccY\) be a Young diagram with \(\ccn\) cells.
Recall that the corresponding irreducible representation \(\cRep^\ccY\)
is a map from \(\Sym(\ccn)\) to \(GL(d, \CC)\).
Since \(\Sym(\ccn - 1) \subseteq \Sym(\ccn)\),
we can obtain a representation of \(\Sym(\ccn - 1)\) by restricting \(\cRep^\ccY\) to \(\Sym(\ccn - 1)\).
The obtained representation \(\cRep^\ccY\big|_{\Sym(\ccn - 1)}\) may not be irreducible and can be decomposed as follows:
\begin{equation}
\label{org30725d4}
 \cRep^\ccY\big|_{\Sym(\ccn - 1)} = \bigoplus_{\ccY^-} \cRep^{\ccY^-},
\end{equation}
where the direct sum runs over all Young diagrams \(Y^-\) obtained from \(Y\) by removing a hook of length 1.
For example, 
\[
 \Yvcentermath1\cRep^{\,\sstiny \yng(2,1)}\big|_{\Sym(2)} = \cRep^{\,\sstiny \yng(1,1)} \oplus \cRep^{\,\sstiny \yng(2)}
 \quad \tand \quad \cRep^{\,\sstiny \yng(4,3,2)}\Big|_{\Sym(8)} = \cRep^{\,\sstiny \yng(3,3,2)}  \oplus \cRep^{\,\sstiny \yng(4,2,2)} \oplus \cRep^{\,\sstiny \yng(4,3,1)}.
\]
From (\ref{org30725d4}) and Theorem \ref{org2374258}, we see that
the restriction \(\cRep[\ccY]\) to \(\Sym(\ppsi<\ccp>(\ccY))\) has a component with degree prime to \(\ccp\).
Thus we will call Theorem \ref{org2374258} the \emph{\(p'\)-component theorem}.
For example, if \(\ccY = (4, 3, 2)\), then
\begin{align*}
 \Yvcentermath1 \cRep^{\ccY}\big|_{\Sym(\ppsi<2>(\ccY))} &= \Yvcentermath1\cRep^{\,\sstiny \yng(4,3,2)}\Big|_{\Sym(7)} 
 \Yvcentermath1= \Bigl(\cRep^{\,\sstiny \yng(4,3,2)}\Big|_{\Sym(8)}\Bigr)\Big|_{\Sym(7)} \\
 &= \Yvcentermath1\Bigl(\cRep^{\,\sstiny \yng(3,3,2)}  \oplus \cRep^{\,\sstiny \yng(4,2,2)} \oplus \cRep^{\,\sstiny \yng(4,3,1)}\Bigr)\Big|_{\Sym(7)} \\
 &= \Yvcentermath1\Bigl(\cRep^{\,\sstiny \yng(3,2,2)} \oplus \cRep^{\,\sstiny \yng(3,3,1)} \Bigr) \oplus \Bigl(\cRep^{\,\sstiny \yng(3,2,2)} \oplus \cRep^{\,\sstiny \yng(4,2,1)} \Bigr) \oplus \Bigl(\cRep^{\,\sstiny \yng(3,3,1)} \oplus \cRep^{\,\sstiny \yng(4,2,1)} \oplus \cRep^{\,\sstiny \yng(4,3)} \Bigr).  
\end{align*}
Since
\[
 \Yvcentermath1\deg \cRep^{\,\sstiny \yng(3,2,2)} = \ccf^{\,\sstiny \yng(3,2,2)} = 21,
\]
the representation \(\Yvcentermath1\cRep^{\,\sstiny \yng(3,2,2)}\) is a component of \(\cRep^{\ccY}|_{\Sym(\ppsi<2>(\ccY))}\) with odd degree.
 
\end{remark}

\comment{connect}
\label{sec:org86e5cae}
As we have mentioned, by using the \(p'\)-component theorem, we can show that
the Sprague-Grundy function of a \(p\)-saturation of Welter's game is equal to \(\ppsi<\ccp>\).
We generalize the \(p'\)-component theorem in the next section.

\subsection{A generalization of the \(p'\)-component theorem}
\label{sec:orgaf5d765}
Let \(\bbY\) be a \(\cck\)-tuple \((\ccY^1, \ldots, \ccY^\cck)\) of Young diagrams.
A \emph{Young tableau} \(\bbT\) of shape \(\bbY\) is a bijection from the disjoint union \(\bigsqcup_{\ccl} \ccY^l\)  to \(\set{1, 2, \ldots \size{\bbY}}\),
where \(\size{\bbY} = \sum_\ccl \size{\ccY^\ccl}\).
A Young tableau \(\bbT\) of shape \(\bbY\) is called a \emph{standard tableau} if \(\bbT(\cci, \ccj) \le \bbT(\cci', \ccj')\) for 
\((\cci, \ccj), (\cci', \ccj') \in \ccY^\ccl\) and \(\ccl \in \set{1,\ldots,\cck}\) with \(\cci \le \cci'\) and \(\ccj \le \ccj'\).
Figure \ref{fig:org1394d7d} shows an example of a standard tableau.

\begin{figure}[htbp]
\centering
\includegraphics[scale=0.6]{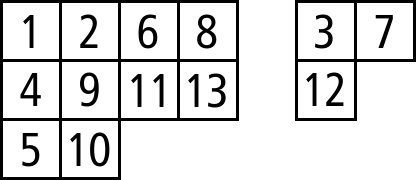}
\caption{\label{fig:org1394d7d}A standard tableau of shape \(((4,4,2), (2,1))\).}
\end{figure}

Let \(f^\bbY\) denote the number of standard tableaux of shape \(\bbY\).
It follows from Theorem \ref{org94584db} that
\[
 f^\bbY = \frac{\bbn!}{n^1! \cdots n^\cck !} \frac{n^1 !}{ \prod_{\cch \in \sH(\ccY^1)} \cch} \cdots \frac{n^\cck !}{ \prod_{\cch \in \sH(\ccY^\cck)} \cch} = \frac{\bbn!}{\prod_{\cch \in \sH(\bbY)} \cch},
\]
where \(\bbn = \size{\bbY}\), \(\ccn^\ccl = \size{\ccY^\ccl}\), and \(\sH(\bbY)\) is the multiset addition \(\sH(\ccY^1) + \ldots + \sH(\ccY^\cck)\).
For example, \(\set{1, 2} + \set{1, 3} = \set{1, 1, 2, 3}\).
Moreover, by using Olsson's generalization \cite{olsson-mckay-1976} of Macdonald's characterization, we can show that
\(f^\bbY\) is prime to \(\ccp\) if and only if \(\ppsi<\ccp>(\bbY) = \size{\bbY}\), where 
\begin{align*}
 \ppsi<\ccp>(\bbY) &= \ppsi<\ccp>(\ccY^1) \oplus_\ccp \cdots \oplus_\ccp \ppsi<\ccp>(\ccY^\cck) \\
 &= \bigopluspm[\cch \in \sH(\bbY)][][\ccp][-1.1em][0.3em] \pnorm<\ccp>(\cch).
\end{align*}

Let \(\bbY\) and \(\bbZ\) be \(\cck\)-tuples \((\ccY^1, \ldots, \ccY^\cck)\) and \((\ccZ^1, \ldots, \ccZ^\cck)\) of Young diagrams, respectively.
We write \(\bbZ \subseteq \bbY\) if \(\ccZ^\ccl \subseteq \ccY^\ccl\) for \(\ccl \in \set{1, \ldots, \cck}\).
The following theorem follows from Proposition \ref{org14ff579}.

 \begin{theorem}
 \comment{Thm.}
\label{sec:orgf8a75f4}
\label{org4fefb85}
Every \(\cck\)-tuple \(\bbY\) of Young diagrams includes a \(\cck\)-tuple \(\bbZ\) of Young diagrams 
with \(\ppsi<\ccp>(\bbY)\) cells in total such that \(\ccf^\bbZ\) is prime to \(\ccp\).
 
\end{theorem}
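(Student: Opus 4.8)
The plan is to transcribe the statement into the language of disjunctive sums of Welter's games and then apply Proposition \ref{org14ff579}, exactly as Theorem \ref{org2374258} (the case \(\cck = 1\)) was deduced from Proposition \ref{org8a24e5c}. Write \(\bbY = (\ccY^1, \ldots, \ccY^\cck)\) with \(\ccY^\ccl = (\lambda^{\ccl, 1}, \ldots, \lambda^{\ccl, \ccm^\ccl})\), and for each \(\ccl\) I would set
\[
 \ccA^\ccl = (\lambda^{\ccl, 1} + \ccm^\ccl - 1,\ \lambda^{\ccl, 2} + \ccm^\ccl - 2,\ \ldots,\ \lambda^{\ccl, \ccm^\ccl}),
\]
a position in \(\Welter[\ccm^\ccl]\) with \(\ccY(\ccA^\ccl) = \ccY^\ccl\). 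Let \(\tilde{\Gamma}\) be a \(p\)-saturation of \(\Welter[\ccm^1] + \cdots + \Welter[\ccm^\cck]\) and let \(\tilde{\Gamma}^\ccl\) be a \(p\)-saturation of \(\Welter[\ccm^\ccl]\); put \(\bbA = (\ccA^1, \ldots, \ccA^\cck) \in \Position[\tilde{\Gamma}]\).

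First I would record two identities. Since the length function is additive over disjunctive sums and \(\lg_{\tilde{\Gamma}^\ccl}(\ccA^\ccl) = \size{\ccY(\ccA^\ccl)}\), we get \(\lg_{\tilde{\Gamma}}(\bbA) = \sum_\ccl \size{\ccY^\ccl} = \size{\bbY}\); and by Theorem \ref{org47862fb}, \(\sg_{\tilde{\Gamma}}(\bbA) = \ppsi<\ccp>(\ccA^1) \oplus_\ccp \cdots \oplus_\ccp \ppsi<\ccp>(\ccA^\cck) = \ppsi<\ccp>(\bbY)\). Then I would apply Proposition \ref{org14ff579}: the game \(\tilde{\Gamma}\) satisfies \textup{(FD)}, so \(\bbA\) has a full descendant \(\bbB = (\ccB^1, \ldots, \ccB^\cck)\) with \(\sg_{\tilde{\Gamma}}(\bbB) = \sg_{\tilde{\Gamma}}(\bbA)\). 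Set \(\bbZ = (\ccY(\ccB^1), \ldots, \ccY(\ccB^\cck))\). Each \(\ccB^\ccl\) is a position of \(\Welter[\ccm^\ccl]\) obtained from \(\ccA^\ccl\) by decreasing its coordinates, since restricting any move of \(\tilde{\Gamma}\) to the \(\ccl\)-th block subtracts a vector in \(\NN^{\ccm^\ccl}\); hence the decreasingly sorted coordinate sequence of \(\ccB^\ccl\) is entrywise \(\le\) that of \(\ccA^\ccl\), which gives \(\ccY(\ccB^\ccl) \subseteq \ccY^\ccl\) and so \(\bbZ \subseteq \bbY\).

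It then remains to compute \(\size{\bbZ}\) and \(\ppsi<\ccp>(\bbZ)\). Using additivity of \(\lg\), the fullness of \(\bbB\), and the two identities,
\[
 \size{\bbZ} = \sum_\ccl \size{\ccY(\ccB^\ccl)} = \lg_{\tilde{\Gamma}}(\bbB) = \sg_{\tilde{\Gamma}}(\bbB) = \sg_{\tilde{\Gamma}}(\bbA) = \ppsi<\ccp>(\bbY),
\]
while Theorem \ref{org47862fb} applied to \(\bbB\) gives \(\ppsi<\ccp>(\bbZ) = \ppsi<\ccp>(\ccY(\ccB^1)) \oplus_\ccp \cdots \oplus_\ccp \ppsi<\ccp>(\ccY(\ccB^\cck)) = \sg_{\tilde{\Gamma}}(\bbB) = \size{\bbZ}\). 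By the characterization recalled just before the statement (from Olsson's generalization of Macdonald's theorem), \(\ppsi<\ccp>(\bbZ) = \size{\bbZ}\) is equivalent to \(\ccf^\bbZ\) being prime to \(\ccp\); so \(\bbZ\) is a \(\cck\)-tuple of Young diagrams of the required kind, with \(\ppsi<\ccp>(\bbY)\) cells in total.

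The substance of the argument already lies in Proposition \ref{org14ff579}, which in turn rests on the \(p\)-calmness of Welter's game, so the only remaining work is the combinatorial dictionary. The one step that needs a genuine, if short, justification is the inclusion \(\bbZ \subseteq \bbY\): I would check that a descendant in a \(p\)-saturation of a disjunctive sum of Welter's games still restricts, block by block, to an inclusion of Young diagrams, using that such moves only decrease coordinates and that the \(\cci\)-th largest coordinate of a tuple of distinct nonnegative integers cannot increase when all of its coordinates are decreased, together with the standard description of \(\subseteq\) between the Young diagrams of Welter positions in terms of entrywise domination of sorted coordinate sequences. Everything else is bookkeeping with the identities \(\size{\ccY(\ccA)} = \lg_{\tilde{\Gamma}}(\ccA)\) and \(\sg_{\tilde{\Gamma}} = \ppsi<\ccp>\) already available from Theorems \ref{org47862fb} and \ref{orgf794c12}.
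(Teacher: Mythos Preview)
Your proof is correct and follows essentially the same approach as the paper: encode \(\bbY\) as a position \(\bbA\) in a \(p\)-saturation of \(\Welter[\ccm^1] + \cdots + \Welter[\ccm^\cck]\), apply Proposition \ref{org14ff579} to obtain a full descendant \(\bbB\) with the same Sprague-Grundy value, and read off \(\bbZ\) from its Young diagrams. You supply more detail than the paper on two points it leaves implicit---the justification of \(\bbZ \subseteq \bbY\) via entrywise domination of the sorted coordinate sequences, and the explicit invocation of the Olsson--Macdonald characterization to pass from \(\ppsi<\ccp>(\bbZ) = \size{\bbZ}\) to \(\ccf^\bbZ\) prime to \(\ccp\)---but the structure is identical.
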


\begin{proof}
 \comment{Proof.}
\label{sec:org71fb998}
Let \(\bbY = (\ccY^1, \ldots, \ccY^\cck)\), \(\ccY^\ccl = (\lambda^{\ccl, 1}, \ldots, \lambda^{\ccl, \ccm^\ccl})\), \(\ccA^\ccl = (\lambda^{l, 1} + \ccm^\ccl - 1,\) \(\lambda^{l, 2} + \ccm^\ccl - 2,\) \(\ldots,\) \(\lambda^{l, \ccm^\ccl})\),
and \(\bbA = (\ccA^1, \ldots, \ccA^\cck)\). 
Then \(\bbY = (\ccY(\ccA^1), \ldots, \ccY(\ccA^\cck))\).
We consider \(\bbA\) as a position in \(\tilde{\Gamma}\),
where \(\tilde{\Gamma}\) is a \(p\)-saturation of \(\Welter[\ccm^1] + \cdots + \Welter[\ccm^\cck]\).
By Proposition \ref{org14ff579},
the position \(\bbA\) has a full descendant \(\bbB\) with the same Sprague-Grundy value as \(\bbA\), that is, \(\lg_{\tilde{\Gamma}}(\bbB) = \ppsi<\ccp>(\bbB) = \ppsi<\ccp>(\bbA)\).
Let \(\bbZ = (\ccY(\ccB^1), \ldots, \ccY(\ccB^\cck))\).
We see that \(\bbZ \subseteq \bbY\). Moreover,
\[
 \size{\bbZ} = \lg_{\tilde{\Gamma}}(\bbB) = \ppsi<\ccp>(\bbB) = \ppsi<\ccp>(\bbA) = \ppsi<\ccp>(\bbY).
\]
Therefore \(\bbZ\) is the desired \(\cck\)-tuple of Young diagrams.
\end{proof}

 \begin{example}
 \comment{Exm.}
\label{sec:orga60059a}
\label{org4aa6273}
Let \(\ccp = 2\) and \(\Yvcentermath1 \bbY = ((4,4,2), (2,1)) = \Bigl(\,\stiny \yng(4,4,2),\ \ \raisebox{0.1em}{\yng(2,1)}\Bigr)\).
Then 
\[
 \sH(\bbY) = \set{1,1,1,1,2,2,2,3,3,4,5,5,6} 
\]
(see Figure \ref{fig:orge64af80}). Hence
\[
 \Yvcentermath1 \ccf^{\bigl(\,\sstiny \yng(4,4,2),\ \ \raisebox{0.2em}{\yng(2,1)}\bigr)} = \frac{13!}{1^4 \cdot 2^3 \cdot 3^2 \cdot 4 \cdot 5^2 \cdot 6} = 144144.
\]
Moreover,
\[
 \Yvcentermath1\ppsi<2>\Bigl(\,{\stiny\yng(4,4,2),\ \ \raisebox{0.1em}{\yng(2,1)}}\Bigr) = \ppsi<2>\Bigl({\stiny\yng(4,4,2)}\Bigr) \oplus_2 \ppsi<2>\Bigl({\stiny\yng(2,1)}\Bigr) = 6 \oplus_2 1 = 7.
\]
By Theorem \ref{org4fefb85}, \(\bbY\) includes a pair \(\bbZ\) of Young diagrams with 7 cells in total such that \(\ccf^\bbZ\) is odd. 
Indeed, let \(\bbZ = ((2,2,1), (2))\). We see that
\[
 \Yvcentermath1 \ccf^{\,\sstiny \bigl(\, \yng(2,2,1),\ \ \raisebox{1.4em}{\yng(2)}\bigr)} = \frac{7!}{1^3 \cdot 2^2 \cdot 3 \cdot 4} = 105.
\]
Thus \(\bbZ\) is the desired pair of Young diagrams.
\begin{figure}[htbp]
\centering
\includegraphics[scale=0.6]{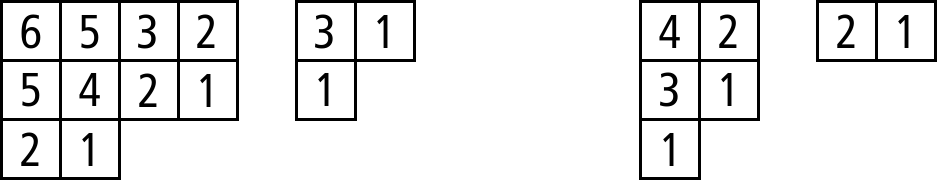}
\caption{\label{fig:orge64af80}The hook lengths of \(((4,4,2), (2,1))\) and \(((2,2,1), (2))\).}
\end{figure}
 
\end{example}

\begin{remark}
 \comment{Rem.}
\label{sec:orgada34ad}
\label{org09152e2}
We give an algebraic interpretation of Theorem \ref{org2374258}.
It is known that \(\cck\)-tuples of Young diagrams with \(n\) cells in total 
are in one-to-one correspondence with the irreducible representations of the generalized symmetric group \((\mathbb{Z} / \cck \mathbb{Z}) \wr \Sym(\ccn)\) (see, for example, \cite{osima-representations-1954, stembridge-eigenvalues-1989} for details).
For a \(\cck\)-tuple \(\bbY\) of Young diagrams with \(\ccn\) cells in total, let \(\cRep^\bbY\) denote the corresponding irreducible representation of \((\mathbb{Z} / \cck \mathbb{Z}) \wr \Sym(n)\).
Then the degree of \(\cRep^\bbY\) is equal to \(\ccf^\bbY\).
Moreover, the following analogue of (\ref{org30725d4}) holds:
\begin{equation}
\label{org80d910d}
 \cRep^\bbY\big|_{(\mathbb{Z} / \cck \mathbb{Z}) \wr \Sym(\ccn - 1)} = \bigoplus_{\bbY^-} \cRep^{\bbY^-},
\end{equation}
where the direct sum runs over all \(k\)-tuples \(\bbY^-\) of Young diagrams obtained from \(\bbY\) by removing a hook of length 1.
For example,
\[
\Yvcentermath1 \cRep^{\bigl(\,\sstiny \yng(4,4,2),\ \ \raisebox{0.2em}{\yng(2,1)}\bigr)} \big|_{(\ZZ/2\ZZ) \wr \Sym(12)} = \cRep^{\bigl(\,\sstiny \yng(4,3,2),\ \ \raisebox{0.2em}{\yng(2,1)}\bigr)} \oplus \cRep^{\bigl(\,\sstiny \yng(4,4,1),\ \ \raisebox{0.2em}{\yng(2,1)}\bigr)} \oplus \cRep^{\bigl(\,\sstiny \yng(4,4,2),\ \ \raisebox{0.2em}{\yng(1,1)}\bigr)} \oplus \cRep^{\bigl(\,\sstiny \yng(4,4,2),\ \ \raisebox{1.4em}{\yng(2)}\bigr)}.
\]
Let \(\bbY\) and \(\bbZ\) be two \(\cck\)-tuples of Young diagrams such that \(\bbZ \subseteq \bbY\).
By (\ref{org80d910d}), the representation \(\cRep[\bbZ]\) is a component of
the restriction of \(\cRep[\bbY]\) to \((\mathbb{Z} / \cck \mathbb{Z}) \wr \Sym(\size{\bbZ})\).
Therefore Theorem \ref{org4fefb85} says that
the restriction \(\cRep[\bbY]\) to \((\mathbb{Z} / \cck \mathbb{Z}) \wr \Sym(\ppsi<\ccp>(\bbY))\) has a component with degree prime to \(\ccp\).
 
\end{remark}

\comment{References}
\label{sec:orgbc4db5b}

\end{document}